\theoremstyle{plain}
\newtheorem{thm}{Theorem}[section]
\newtheorem{prp}[thm]{Proposition}
\newtheorem{lm}[thm]{Lemma}
\newtheorem{rmq}[thm]{Remark}
\theoremstyle{definition}
\newtheorem{df}[thm]{Definition}
\newtheorem{ex}[thm]{Example}
\newcommand\R{\mathbb{R}} 
\newcommand\N{\mathbb{N}}
\newcommand{\lS}{\ell_S}
\title{Fixed points of endomorphisms and  relations between metrics\\in preGarside monoids}
\date{November, 2013}
\author{Oussama AJBAL}
\begin{document}

\maketitle

\begin{abstract}
In \cite{RoS}, it is proved that the fixed points submonoid and the periodic points submonoid of a trace monoid endomorphism are always finitely generated. We show that for finitely generated left preGarside monoids, that includs finitely generated preGarside monoids, Garside monoids and Artin monoids, the fixed and periodic points submonoids of any endomorphism are also finitely generated left preGarside monoids under some condition, and in the case of Artin monoids, these submonoids are always Artin monoids too. We also prove algebraically some inequalities, equivalences and non-equivalences between three metrics in finitely generated preGarside monoids, and especially in trace monoids and Garside monoids.
\end{abstract}

\section*{Introduction}
Trace monoids, or equivalently partially commutative monoids,  are monoids of a particular interest and have been widly studied. In particular, they are Artin Monoids. This explains why they are also called Right Angle Artin monoids (RAAM for short). In \cite{RoS, RoS1}, the authors consider endomorphisms of trace monoids. They study the submonoid of fixed points  of such an endomorphism and prove that it is finitely generated. They also caracterise those endomorphisms that are contractions relatively to some natural distance on trace monoids. In~\cite{Cri2}, Crisp obtained similar results for Artin monoids. He proved that the submonoid of fixed points is even finitely presented, but only in the special  case of an isomorphism. Here, we aim to unified and extend both results to all Artin monoids and, more generally,  to the larger classes of preGarside monoids.\\

Given a monoid $M$, we denote by $\mathrm{End}(M)$ the endomorphism monoid of $M$. For 
$\varphi\in\mathrm{End}(M)$, we say that $x\in M$ is a {\em fixed point} of $\varphi$ if 
$\varphi(x)=x$. If $\varphi^n(x)=x$ for some $n\geq1$, we say that $x$ is a {\em periodic 
point} of $\varphi$. Let $\mathrm{Fix}(\varphi)$ (respectively $\mathrm{Per}(\varphi)$) denote 
the submonoid of all fixed points (respectively periodic points) of $\varphi$. Clearly,
\[\mathrm{Per}(\varphi)=\bigcup_{n\geq1}\mathrm{Fix}(\varphi^n).\]

Our First result is the following:

\begin{thm}~
\begin{enumerate}
\item[(i)] If $M$ is a finitely generated left preGarside monoid, with an additive and homogeneous norm $\nu$, and $\varphi$ is in $\mathrm{End}(M)$ such that the morphism $\pi$ is well-defined, then $\mathrm{Fix}(\varphi)$ and $\mathrm{Per}(\varphi)$ are also finitely generated left preGarside monoids.
\item[(ii)] if $M$ is an Artin monoid, and $\varphi$ is in $\mathrm{End}(M)$, then $\mathrm{Fix}(\varphi)$ and $\mathrm{Per}(\varphi)$ are also Artin monoids.
\end{enumerate}
\end{thm}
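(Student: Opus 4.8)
The plan is to treat $\mathrm{Fix}(\varphi)$ first and deduce $\mathrm{Per}(\varphi)$ afterwards. For (i) I would begin with the elementary closure properties. The set $\mathrm{Fix}(\varphi)$ is obviously a submonoid; it inherits left-cancellativity from $M$, and the restriction of $\nu$ is still an additive homogeneous norm, so $\mathrm{Fix}(\varphi)$ is atomic and noetherian for left-divisibility. The key remark is that left-divisibility inside $\mathrm{Fix}(\varphi)$ is the restriction of the one of $M$: if $y=xz$ with $x,y\in\mathrm{Fix}(\varphi)$, then left-cancelling $\varphi(x)=x$ in $\varphi(y)=\varphi(x)\varphi(z)=xz$ gives $\varphi(z)=z$. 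From this one gets that conditional left-gcds and left-lcms exist in $\mathrm{Fix}(\varphi)$: for $x,y\in\mathrm{Fix}(\varphi)$ their left-gcd $g$ computed in $M$ satisfies $\varphi(g)\preceq g$ (since $\varphi(g)$ is a common left-divisor of $x$ and $y$), the resulting $\preceq$-decreasing sequence $(\varphi^{k}(g))_{k}$ has non-increasing norm hence stabilizes to a fixed point $g'$, and $g'$ is readily checked to be the left-gcd of $x,y$ in $\mathrm{Fix}(\varphi)$; the left-lcm case is dual, using an ascending sequence bounded above by a given common multiple. Hence, once finite generation is established, $\mathrm{Fix}(\varphi)$ will be a finitely generated left preGarside monoid.

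The heart of the matter is finite generation. Since $M$ is finitely generated with an additive homogeneous norm it has only finitely many elements of bounded norm, so it suffices to produce a bound $N$ and show that $\mathrm{Fix}(\varphi)$ is generated by $\{x\in\mathrm{Fix}(\varphi):\nu(x)\le N\}$; I would prove this by noetherian induction on $\nu(x)$, the point being that a fixed point $x$ of large norm must admit a proper non-trivial fixed left-divisor, whose complement is then automatically fixed by the remark above. To find such a divisor, pick an atom $a\preceq x$; since $a\preceq x$ forces $\varphi^{n}(a)\preceq\varphi^{n}(x)=x$ for all $n$, the forward orbit $\{\varphi^{n}(a):n\ge0\}$ consists of elements of norm $\le\nu(x)$, hence is finite and eventually periodic, say $\varphi^{p+q}(a)=\varphi^{p}(a)$; then the left-lcm $\ell$ of the periodic part $\{\varphi^{n}(a):p\le n<p+q\}$ is cyclically permuted by $\varphi$, so $\varphi(\ell)=\ell$, with $\ell\preceq x$ and $\ell\neq1$ provided the orbit of $a$ does not fall onto $1$. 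The morphism $\pi$ enters exactly here (this is where its well-definedness is used): it lets one choose the atom $a$ so that the degenerate case is avoided, and, when $\ell=x$, still forces a splitting of $x$; concretely $\pi$ realizes a retraction of $M$ onto $\mathrm{Fix}(\varphi)$ whose behaviour on the finitely many atom-orbits yields the uniform bound $N$ by a pigeonhole argument. I expect this step — handling the case where no atom of $x$ produces a proper fixed left-divisor — to be the main obstacle, and the hypothesis on $\pi$ is precisely what removes it.

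For $\mathrm{Per}(\varphi)=\bigcup_{n\ge1}\mathrm{Fix}(\varphi^{n})$ I would run the same orbit analysis with $\varphi^{p}$ in place of $\varphi$: every $x\in\mathrm{Per}(\varphi)$ with $\varphi^{p}(x)=x$ admits a non-trivial periodic left-divisor coming from the $\varphi^{p}$-orbit of one of its atoms restricted to multiples of $p$ (which stays below $x$), so the noetherian induction goes through; equivalently, one checks that the ascending chain $\mathrm{Fix}(\varphi^{n!})$ stabilizes because all these submonoids are generated below a norm bound uniform in the power. Either way $\mathrm{Per}(\varphi)$ is again a finitely generated left preGarside monoid.

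Finally, for (ii), an Artin monoid $A$ carries the word-length norm, which is additive and homogeneous, and $\pi$ is well-defined for every endomorphism of an Artin monoid, so (i) applies and $\mathrm{Fix}(\varphi)$ (and $\mathrm{Per}(\varphi)$) is already a finitely generated left preGarside monoid. To upgrade this to an Artin monoid I would identify its atoms — precisely the orbit-lcms $\ell$ constructed above, one for each standard generator of $A$ whose orbit avoids $1$ — and verify that any two of them obey a braid-type relation: two such atoms $\ell_{a},\ell_{b}$ lie in the parabolic, hence again Artin, submonoid of $A$ generated by the standard generators occurring in their orbits, and inside a parabolic submonoid the left-lcm of $\ell_{a}$ and $\ell_{b}$ can be computed explicitly and seen to be an alternating product $\ell_{a}\ell_{b}\ell_{a}\cdots=\ell_{b}\ell_{a}\ell_{b}\cdots$. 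Proving that these are the only relations — so that the resulting presentation is genuinely of Artin type, and producing the associated Coxeter datum — is the delicate point, which I would settle by a normal-form argument together with the embedding of $A$ into its group.
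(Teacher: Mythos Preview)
Your argument that $\mathrm{Fix}(\varphi)$ inherits the left preGarside structure is correct, and the ascending-sequence trick for the conditional lcm (take $\delta=x\vee y$ in $M$, iterate $\varphi$, bound by a fixed common multiple, stabilize) is fine and in fact a bit more direct than the paper's route through the auxiliary submonoids $M_0$ and $M_2$. The orbit-of-an-atom idea is also exactly right in the permutation case --- it is the paper's Lemma~\ref{lm1}.

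The genuine gap is in the role you assign to $\pi$. It is \emph{not} a retraction of $M$ onto $\mathrm{Fix}(\varphi)$: its image is $M_2=\langle S_2\rangle^+$, which need not sit inside $\mathrm{Fix}(\varphi)$. More damagingly, your plan to ``choose the atom $a$ so that the degenerate case is avoided'' can simply fail. Take the free monoid on $\{a,b,c\}$ with $\varphi(a)=1$, $\varphi(b)=ab$, $\varphi(c)=c$; here $\pi$ is well-defined, $x=ab$ is fixed, yet the \emph{only} atom left-dividing $x$ is $a\in S_1$, whose orbit collapses to $1$, so your noetherian induction produces no non-trivial fixed divisor and stalls. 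The paper's remedy is a two-step reduction rather than a pigeonhole on orbits: first set $\varphi_2=(\pi\circ\varphi)|_{M_2}$, check $1\notin\varphi_2(S_2)$, and prove $\mathrm{Fix}(\varphi)=\varphi^{n_\varphi}(\mathrm{Fix}(\varphi_2))$; then, for any endomorphism $\psi$ with $1\notin\psi(S)$, show $\mathrm{Fix}(\psi)=\mathrm{Fix}(\psi|_{M_0})$ where $M_0=\langle S\cap\mathrm{Per}(\psi)\rangle^+$ and $\psi|_{M_0}$ permutes the atoms --- at which point your orbit argument applies. The retraction that actually matters is $\varphi^{n_\varphi}:\mathrm{Fix}(\varphi_2)\to\mathrm{Fix}(\varphi)$ with section $\pi|_{\mathrm{Fix}(\varphi)}$. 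The $\mathrm{Per}$ statement then follows from the identity $\mathrm{Per}(\varphi)=\mathrm{Fix}(\varphi^{pn_\varphi})$.

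For (ii) your proposed direct verification of an Artin presentation --- checking braid relations among orbit-lcms and then ruling out extra relations via normal forms --- is far harder than what is needed, and your sketch gives no mechanism for the ``no extra relations'' part. The paper instead sharpens the retraction above to an \emph{isomorphism} $\mathrm{Fix}(\varphi)\cong\mathrm{Fix}(\varphi_2)$ (injectivity from $(\pi\circ\varphi)^d=\pi\circ\varphi^d$), reduces once more to the permutation case, and then invokes Crisp's theorem that the fixed submonoid of a diagram automorphism of an Artin monoid is again Artin.
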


Similar result  holds if one considers right preGarside monoids.\\

Two distances $d_2$ and $d_3$  on trace monoids have been introduced in~\cite{BMP} and~\cite{Kwi}. They were proved to be uniformaly equivalent in~\cite{KK}. In~\cite{RoS}, the authors caracterise those endomorphisms that are contractions relatively to $d_2$. We will prove that in the general context of Artin monoids, distance $d_2$ and $d_3$ are no more uniformaly equivalent in general. Indeed, distance $d_2$ should be replace by an alternative one, that we denote by $d_1$. We prove that $d_1$ is larger than the other two in the general case, and we will also prove that $d_1$ and $d_3$ are uniformly equivalent in the case of Artin monoids of spherical type.\\

Finally, replacing $d_2$ by $d_1$, we extend \cite[Theorem 4.1]{RoS}, to all Artin monoids  in Theorem \ref{thm4} (we refer to the next sections for notations).

\begin{thm}
Let $M$ be an Artin monoid and $\varphi$ be in  $\mathrm{End}(M)$.
\begin{enumerate}
\item[(i)] $\varphi$ is a contraction with respect to $d_1$;
\item [(ii)] for all $u,v\in M_{\mathrm{red}}$, ~~~$\alpha(uv)=u~~\Rightarrow~~\alpha(\varphi(uv))=\alpha(\varphi(u))$.
\end{enumerate}
\end{thm}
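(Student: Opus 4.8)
The plan is to prove the two implications separately; the weight is on $(ii)\Rightarrow(i)$, and the whole argument funnels through the auxiliary property
\[(\star)\qquad \alpha(\varphi(x))=\alpha\bigl(\varphi(\alpha(x))\bigr)\quad\text{for all }x\in M,\]
i.e. "the first normal-form factor of $\varphi(x)$ depends only on $\alpha(x)$". Recall from the section defining $d_1$ that $d_1(x,y)=2^{-n}$, where $n$ is the number of initial factors shared by the left-greedy normal forms of $x$ and $y$; writing $x\sim_n y$ for this agreement, one has $d_1(x,y)\le 2^{-n}\iff x\sim_n y$, and in particular $d_1(x,y)\le\tfrac12\iff\alpha(x)=\alpha(y)$. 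So "$\varphi$ is a contraction for $d_1$" says exactly: $x\sim_n y\Rightarrow\varphi(x)\sim_n\varphi(y)$ for all $n$ and all $x,y$. In particular a contraction satisfies the case $n=1$, namely $\alpha(x)=\alpha(y)\Rightarrow\alpha(\varphi(x))=\alpha(\varphi(y))$, which applied to the pair $\bigl(x,\alpha(x)\bigr)$ (they have the same $\alpha$) gives $(\star)$; and then for $u,v\in M_{\mathrm{red}}$ with $\alpha(uv)=u$, property $(\star)$ yields $\alpha(\varphi(uv))=\alpha(\varphi(\alpha(uv)))=\alpha(\varphi(u))$, which is $(ii)$. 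This settles $(i)\Rightarrow(ii)$.

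For $(ii)\Rightarrow(i)$ I would first deduce $(\star)$ from $(ii)$ by induction on the normal-form length of $x$. If $x$ is reduced there is nothing to prove. Otherwise let $w_1\mid w_2\mid\dots\mid w_k$ ($k\ge2$) be the normal form of $x$, so $\alpha(x)=w_1$, the pair $w_1\mid w_2$ is left-weighted (so $\alpha(w_1w_2)=w_1$), and $w_2\mid\dots\mid w_k$ is the normal form of $w_2\cdots w_k$. Since $\varphi(w_1)$ left-divides $\varphi(x)$, $\alpha(\varphi(w_1))$ left-divides $\alpha(\varphi(x))$; for the reverse, let $D$ be any reduced left divisor of $\varphi(x)$ and put $P=D\vee\varphi(w_1)$, the left-lcm (it exists, as both left-divide $\varphi(x)$). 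Then $P=\varphi(w_1)\,q$ with $q$ left-dividing $\varphi(w_2\cdots w_k)$, and $q$ is reduced: this uses the standard fact that taking the left-lcm with a reduced element does not increase normal-form length, so the complement $q=\varphi(w_1)^{-1}\bigl(D\vee\varphi(w_1)\bigr)$ is reduced. Being reduced and a left divisor of $\varphi(w_2\cdots w_k)$, the element $q$ divides $\alpha(\varphi(w_2\cdots w_k))$, which by the induction hypothesis equals $\alpha(\varphi(w_2))$ and hence divides $\varphi(w_2)$. Therefore $P=\varphi(w_1)q$ left-divides $\varphi(w_1)\varphi(w_2)=\varphi(w_1w_2)$, so $D$ (a left divisor of $P$) divides $\alpha(\varphi(w_1w_2))$, which equals $\alpha(\varphi(w_1))$ by $(ii)$ applied to $(u,v)=(w_1,w_2)$. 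Thus every reduced left divisor of $\varphi(x)$ divides $\alpha(\varphi(w_1))$, and $(\star)$ follows.

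Next I would upgrade $(\star)$ to the full contraction property, by induction on $n$. The case $n=0$ is vacuous. For $n\ge1$, given $x\sim_n y$, write $s=\alpha(x)=\alpha(y)$, $x=sx'$, $y=sy'$ with $x'\sim_{n-1}y'$. By $(\star)$, $\varphi(x)$ and $\varphi(y)$ share the first normal-form factor $r=\alpha(\varphi(s))$, and their complementary parts are $F\varphi(x')$ and $F\varphi(y')$ with the \emph{same} $F=r^{-1}\varphi(s)$. By the induction hypothesis $\varphi(x')\sim_{n-1}\varphi(y')$, and since left multiplication by a fixed element never decreases normal-form agreement — each $\sim_m$ is a left congruence on $M$, by compatibility of greedy normal forms with left-lcm in preGarside monoids — we get $F\varphi(x')\sim_{n-1}F\varphi(y')$, hence $\varphi(x)\sim_n\varphi(y)$. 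This closes the induction, so $(ii)$ implies $(i)$.

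The delicate point is the inductive step for $(\star)$: it is precisely the assertion that, under $(ii)$, the tails $\varphi(w_2),\dots,\varphi(w_k)$ cannot back-propagate a letter into the head $\alpha(\varphi(w_1))$ of $\varphi(x)$, and it is forced through by the left-lcm manipulation combined with $(ii)$ on consecutive normal-form pairs. That step, and the passage to level $n$ in the last paragraph, both rely on two classical facts about (left pre)Garside and Artin monoids that must be in place in the present generality: (a) the left-lcm of a reduced element with an arbitrary one has complement of normal-form length at most that of the reduced element (so the $q$ above is reduced), and (b) each $\sim_m$ is a left congruence. Both are routine for genuine Garside monoids; the work is to see they survive when left-lcm's only exist conditionally, which should come from the normal-form material of the earlier sections. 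One should also keep in mind that "contraction" is meant relative to the specific ultrametric $d_1$ (the factor-counting one), so the characterization is genuinely about non-decrease of normal-form agreement and would fail for a metric merely uniformly equivalent to $d_1$ — which is exactly why $d_2$ had to be replaced by $d_1$.
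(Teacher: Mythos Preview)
Your proof is correct, but it takes a more circuitous route than the paper's, and the two auxiliary facts you flag as potentially delicate are in fact easy consequences of the single identity~(\ref{eq8}), $\alpha(uv)=\alpha(u\alpha(v))$, which the paper exploits directly.

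For $(ii)\Rightarrow(\star)$, the paper (Lemma~\ref{lm13}) avoids lcm's entirely. It first extends $(ii)$ from $v\in M_{\mathrm{red}}$ to arbitrary $v$ by induction on $|v|_1$: using (\ref{eq8}) twice, $\alpha(\varphi(uv))=\alpha(\varphi(u)\alpha(\varphi(v)))=\alpha(\varphi(u)\alpha(\varphi(v_1)))=\alpha(\varphi(uv_1))$, and since $\alpha(uv_1)=\alpha(u\alpha(v))=\alpha(uv)=u$, the base case gives $\alpha(\varphi(uv_1))=\alpha(\varphi(u))$. Then $(\star)$ is immediate by taking $u=\alpha(x)$. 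Your lcm argument works too, but your fact~(a) --- that the complement $q=z^{-1}(D\vee z)$ of an arbitrary $z$ in its lcm with a reduced $D$ is itself reduced --- is a one-line consequence of (\ref{eq8}): if $c_1=\alpha(q)$ then $D\preceq\alpha(zq)=\alpha(zc_1)\preceq zc_1$, so $zq=D\vee z\preceq zc_1$, forcing $q=c_1$. So your worry about (a) ``surviving when lcm's only exist conditionally'' is unfounded, but the detour through it is also unnecessary.

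For the upgrade from $(\star)$ to the contraction, the paper again avoids your fact~(b). Instead it proves directly, by induction on $k$, that if $\iota_1(u)=(u_1,\dots,u_n)$ and $\iota_1(\varphi(u))=(U_1,\dots,U_N)$, then $\iota_1^{[k]}(\varphi(u_1\cdots u_k))=(U_1,\dots,U_k)$: writing $\varphi(u_1\cdots u_{k-1})=U_1\cdots U_{k-1}Z$, one has $Z\varphi(u_k\cdots u_n)=U_k\cdots U_N$, and then (\ref{eq8}) plus $(\star)$ give $U_k=\alpha(Z\varphi(u_k\cdots u_n))=\alpha(Z\alpha(\varphi(u_k)))\preceq Z\varphi(u_k)$. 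This immediately yields $r_1(\varphi(u),\varphi(v))\ge r_1(u,v)$. Your fact~(b) (that each $\sim_m$ is a left congruence) is also true and also follows from (\ref{eq8}) by the same kind of computation, so your argument closes; but the paper's version is shorter and gives the bonus statement about $\iota_1(\varphi(u_1\cdots u_m))$ in Theorem~\ref{thm4} for free.

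In short: both proofs are valid; the paper's is more economical because it never leaves the orbit of identity~(\ref{eq8}), whereas yours repackages the same identity inside two structural lemmas that you then have to justify separately.
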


\section{Preliminaries}

We start in this section by defining the monoids we will work with in this paper, namely left preGarside monoids and Artin monoids.\\

Consider a monoid $M$. It is said to be {\em cancellative} if, for all $a,b,c,d\in M$, the equality 
$cad=cbd$ imposes $a=b$. An element $b$ is called a {\em factor} of an element $a$ if we can 
write $a=cbd$ in $M$. We denote by $\mathrm{Div}(a)$ the set of factors of $a$. We denote 
by $\preceq$ left divisibility in $M$ (that is, for $a,b\in M$, we have $a\preceq b$ when there 
exists $c\in M$ such that $b=ac$). Right divisibility (defined similarly : $b\succeq a$ when there exists $c\in M$ such that $b=ca$) will rarely be 
used in this paper, so divisibility in $M$ will simply mean left divisibility. When $a$ is a {\em left divisor} of $b$ in $M$, we say that $b$ is a {\em right multiple} of $a$. An element $a$ is said to 
be {\em balanced} if its sets of right-divisors and of left-divisors are equal, which in this case 
have to be equal to $\mathrm{Div}(a)$.\\

We say that $M$ is {\em atomic} if there exists a mapping $\nu:M\rightarrow\mathbb{N}$, 
called a {\em norm}, satisfying $\nu(a)>0$ for $a\neq1$ and $\nu(ab)\geq\nu(a)+\nu(b)$ for 
all $a,b\in M$. Note that the existence of such a mapping implies that the relations $\preceq$ and 
$\succeq$ are partial orders on $M$. When $\nu(ab)=\nu(a)+\nu(b)$ for all $a,b\in M$, we say that $\nu$ is {\em additive}. An {\em atom} in a monoid is an element $a\in M$ satisfying: $a=bc\Rightarrow b=1$ or $c=1$ for all $b,c\in M$. We denote by $\mathcal S(M)$ the set of atoms of $M$. Note that in an atomic monoid $M$, the set $\mathcal S(M)$ has to be a generating set, and that any generating set of $M$ contains $\mathcal S(M)$. In particular, $M$ is finitely generated if and only if $\mathcal S(M)$ is finite. In an atomic monoid $M$, if $\nu(a)=\nu(b)$ for all $a,b\in\mathcal S(M)$, we say that $\nu$ is {\em homogeneous}. \\

A monoid $M$ is said to be a {\em left preGarside monoid} if
\begin{enumerate}
\item[($a_L$)] it is atomic and left cancellative ($ca=cb~\Rightarrow~a=b$);
\item[($b_L$)] for all $a,b\in M$, if the set $\{c\in M\mid a\preceq c~\text{and}~b\preceq c\}$ is nonempty,
then it has a least element, denoted by $a\vee_Lb$ or $a\vee b$.
\end{enumerate}

It is said to be a {\em right preGarside monoid} if

\begin{enumerate}
\item[($a_R$)] it is atomic and right cancellative ($ac=bc~\Rightarrow~a=b$);
\item[($b_R$)] for all $a,b\in M$, if the set $\{c\in M\mid c\succeq a~\text{and}~c\succeq b\}$ is nonempty,
then it has a least element, denoted by $a\vee_Rb$.
\end{enumerate}

And it is said to be a {\em preGarside monoid} if it is both {\em left preGarside} and {\em right preGarside monoid}.\\

A {\em Garside element} of a preGarside monoid is a balanced element whose set of factors 
generates the whole monoid. When such an element exists, we say that the monoid is a {\em 
Garside monoid}.\\

Given a non empty finite set $S$, a {\em Coxeter matrix} is a symmetric matrix $(m_{ab})_
{a,b\in S}$ with entries in $\{1,2,\cdots,\infty\}$, such that $m_{aa}=1$ and $m_{ab}\geq
2$, for $a\neq b$. A Coxeter system associated to a Coxeter matrix $(m_{ab})_{a,b\in S}$ is a pair 
$(W,S)$, where $W$ is the group with presentation 
\[W=\langle S\mid (ab)^{m_{ab}}=1;~m_{ab}\neq\infty\rangle.\]
The corresponding {\em Artin monoid $M$} is the monoid with presentation
\[M=\langle S\mid[a,b\rangle^{m_{ab}}=[b,a\rangle^{m_{ab}};~m_{ab}\neq\infty\rangle^+.\]
where $[a,b\rangle^m$ denotes the alternating product $aba\cdots$ containing $m$ terms.
Since the defined Artin relations are homogeneous, $M$ has a natural length function $\lS$ compatible 
with the product. In \cite{BrS}, it is shown that every finite subset of $M$ has a {\em greatest common left divisor (gcd)}, and a {\em greatest common right divisor (gcrd)}. It is also shown that a finite subset of $S$ has a {\em least common right multiple (lcm)} if and only if it has a common right multiple, and that in that case, the {\em least common right multiple} and {\em least common left multiple} are equal. For a subset $T\subseteq S$, we denote its {\em lcm} by $\Delta(T)$ when it exists.\\

\section{The submonoids of fixed points and of periodic points}

In general, a fixed points submonoid or any submonoid of a finitely generated monoid is not necessarily finitely generated. 

\begin{ex}\label{ex1}
Consider the cancellative monoid $M=\langle a,e,b\mid ae=ea,ebe=b\rangle^+$ and the endomorphism $\varphi$ such that $\varphi(a)=ae$, $\varphi(e)=e$ and $\varphi(b)=b$. Clearly, $a^nba^n\in\mathrm{Fix}(\varphi)$ for every $n\in \N$. And one can show that $a^nba^n$ is not decomposable in $\mathrm{Fix}(\varphi)$, which means that $\mathrm{Fix}(\varphi)$ is not finitely generated.
\end{ex}

In this section, we will show that for a finitely generated left preGarside monoid $M$, and an endomorphism $\varphi\in\mathrm{End}(M)$, the submonoids $\mathrm{Fix}(\varphi)$ and $\mathrm{Per}(\varphi)$ are also finitely generated left preGarside monoids under some condition (Theorems \ref{thm1} and \ref{thm2}). And that, in the particular case of Artin monoids, $\mathrm{Fix}(\varphi)$ and $\mathrm{Per}(\varphi)$ are not only finitely generated, but even finitely presented (Proposition \ref{prp2}).\\

For a monoid $M$, finitely generated by $S$, and an endomorphism $\varphi\in\mathrm{End}(M)$, we define \[ n_\varphi = \left \{
\begin{array}{ll}
     \mathrm{max}\{k\in\N^*\mid\exists s\in S~\mathrm{such~that}~\varphi^k(s)=1~\mathrm{and}~\varphi^{k-1}(s)\neq1\} & \text{if $1\in\varphi(S)$}\\
     1 & \text{if $1\notin\varphi(S)$}\\
\end{array}.
\right.\]

Given a non empty set $X$, we denote by $X^*$ the set of all finite words $x_1\cdots x_n$ over the elements of $X$, that we call {\em letters}, and by $\varepsilon$ the {\em empty word} in $X^*$. Assume $M$ is a monoid generated by a set~$S$, and let $T\subseteq S$. Let us denote by $\pi_T^*: S^*\to T^*$ the \emph{forgetting} morphism of monoids defined by $\pi_T^*(t) = t$ for $t\in T$, and by $\pi_T^*(t)=\varepsilon$ for $t\in S\setminus T$. In the sequel, when it is well-defined, we denote by $\pi_T:M \rightarrow M$ the morphism of monoids induced by $\pi_T^*$.

\begin{ex}\label{ex2}
For $M=\langle s,t\mid sts=tst\rangle^+$, with $S=\{s,t\}$ and $T=\{t\}$, we have $\pi_T^*(sts)=t$ and $\pi_T^*(tst)=tt$. Or in $M$, one has $t\neq t^2$, then $\pi_T$ is not well-defined. But for $M=\langle s,t\mid stst=tsts\rangle^+$ with similar $S$ and $T$, the morphism $\pi_T$ is well-defined, since $\pi_T(stst)=\pi_T(tsts)=t^2$.
\end{ex}~

\subsection{The submonoid of fixed points}

For this subsection, let $M$ be a finitely generated left preGarside monoid, equiped with an additive and homogeneous norm $\nu$, and $\varphi$ be in $\mathrm{End}(M)$. Set $$S=\mathcal S(M),~~S_0=S\cap\mathrm{Per}(\varphi),~~S_1=S\cap(\varphi^{n_\varphi})^{-1}\{1\},~~S_2=S\setminus S_1,$$ $p=|S|!$, and $\pi:=\pi_{S_2}$ when it is well-defined, which is always the case when $1\notin\varphi(S)$.\\

\begin{thm}\label{thm1}
Let $M$ be a finitely generated left preGarside monoid, with an additive and homogeneous norm $\nu$, $\varphi$ be in $\mathrm{End}(M)$, and $\pi$ as above. If the morphism $\pi$ is well-defined, then $\mathrm{Fix}(\varphi)$ is a finitely generated left preGarside monoid.
\end{thm}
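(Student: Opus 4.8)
The plan is to show that $\mathrm{Fix}(\varphi)$ is finitely generated first, and then verify the two preGarside axioms $(a_L)$ and $(b_L)$ for it, with the ambient norm $\nu$ restricted to $\mathrm{Fix}(\varphi)$ serving as a witness for atomicity. Left cancellativity is inherited for free: any submonoid of a left cancellative monoid is left cancellative. Atomicity is also immediate once we know $\mathrm{Fix}(\varphi)$ is finitely generated, since $\nu$ restricts to a norm on any submonoid; so axiom $(a_L)$ reduces entirely to the finite generation claim. For axiom $(b_L)$, the natural attempt is: given $a,b\in\mathrm{Fix}(\varphi)$ with a common right multiple in $\mathrm{Fix}(\varphi)$, they have one in $M$, hence $a\vee_L b$ exists in $M$; then one argues $\varphi(a\vee_L b)=a\vee_L b$ because $\varphi$ fixes $a$ and $b$ and (being a morphism, hence order-preserving for $\preceq$) sends the least common right multiple of $a,b$ to the least common right multiple of $\varphi(a)=a$ and $\varphi(b)=b$. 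The subtle point is that $\varphi$ need not be injective, so "$\varphi$ sends $a\vee_L b$ to $\varphi(a)\vee_L\varphi(b)$" is not automatic; one must check that $\varphi(a\vee_L b)$ is a common right multiple of $a,b$ and is below any other common right multiple fixed by — no, below any common right multiple of $a$ and $b$ in $M$, which gives $a\vee_L b\preceq\varphi(a\vee_L b)$, and combined with $\nu$-additivity and the fact that $\nu(\varphi(x))\le\nu(x)$ is false in general... this is where care is needed. I expect the clean route is to show directly that the least common right multiple computed in $M$ of two elements of $\mathrm{Fix}(\varphi)$ already lies in $\mathrm{Fix}(\varphi)$, using minimality twice (apply $\varphi$, get $\varphi(a\vee b)$ is a common right multiple, so $a\vee b\preceq\varphi(a\vee b)$; iterate and use that an atomic monoid has no infinite strictly increasing divisibility chains of bounded norm — here the homogeneity and additivity of $\nu$, together with $\varphi^p$ stabilizing norms on the relevant pieces, forces equality).

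The heart of the matter, and the main obstacle, is the finite generation of $\mathrm{Fix}(\varphi)$. The hypothesis that $\pi=\pi_{S_2}$ is well-defined, the combinatorial constants $n_\varphi$ and $p=|S|!$, and the decomposition $S=S_0\sqcup(\text{rest})$ with $S_1=S\cap(\varphi^{n_\varphi})^{-1}\{1\}$ all signal the intended mechanism. I would argue as follows. Replacing $\varphi$ by $\varphi^p$ does not change $\mathrm{Per}(\varphi)$ and one can arrange (by the choice $p=|S|!$) that $\varphi^p$ permutes the periodic atoms $S_0$ as the identity and kills every atom that is eventually killed; so up to this standard reduction we may assume $\varphi$ fixes $S_0$ pointwise and $\varphi^{n_\varphi}$ annihilates $S_1$. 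The key structural claim is then: for $x\in\mathrm{Fix}(\varphi)$, the word $\pi(x)$ (the image of $x$ after forgetting the letters of $S_1$) controls $x$ up to bounded-norm data, because applying $\varphi$ repeatedly to a normal-form word for $x$ and using that $\varphi(S_1)\subseteq\langle S_1\cup S_2\rangle$ collapses the $S_1$-part while fixing the $S_2$-part "at the level of $\pi$". Concretely, one shows that if $x=u_0 s_1 u_1 s_2\cdots s_k u_k$ with $s_i\in S_1$ and $u_i$ avoiding $S_1$, then $\varphi^{n_\varphi}(x)$ equals the product of the $\varphi^{n_\varphi}(u_i)$'s, each of which has a $\pi$-image forced by $x$ being a fixed point, and from this one extracts that the atoms of $\mathrm{Fix}(\varphi)$ have norm bounded by an explicit function of $|S|$, $n_\varphi$ and $\nu$ of the images $\varphi(s)$. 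A monoid in which every atom (= indecomposable element) has bounded norm is automatically finitely generated. Making the bound on $\nu$ of indecomposables of $\mathrm{Fix}(\varphi)$ explicit and correct is the technical crux; everything else is bookkeeping with the forgetting morphism $\pi$ and the norm axioms.

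I would organize the write-up as: (1) reduce to $\varphi|_{S_0}=\mathrm{id}$ and $\varphi^{n_\varphi}(S_1)=\{1\}$ via the $p=|S|!$ trick, noting $\mathrm{Fix}(\varphi)\subseteq\mathrm{Fix}(\varphi^p)$ and dealing with fixed vs. periodic points at the end; (2) prove the bounded-norm lemma for indecomposables of $\mathrm{Fix}(\varphi)$ using the well-definedness of $\pi$ to transport fixed-point equations across $\varphi$; (3) conclude finite generation and hence atomicity, i.e. axiom $(a_L)$, with left cancellativity inherited; (4) verify axiom $(b_L)$ by showing $a\vee_L b\in\mathrm{Fix}(\varphi)$ whenever $a,b\in\mathrm{Fix}(\varphi)$ admit a common right multiple there, via the minimality/iteration argument above. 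For $\mathrm{Per}(\varphi)$ (Theorem \ref{thm2}) the same analysis applies to a suitable power of $\varphi$, since $\mathrm{Per}(\varphi)=\bigcup_n\mathrm{Fix}(\varphi^n)=\mathrm{Fix}(\varphi^{p})$ after the reduction. The step I expect to fight hardest with is (2): controlling how $\varphi$ moves the $S_1$-letters inside a normal form of a fixed point, because without injectivity of $\varphi$ the interaction between the forgetting morphism $\pi$ and left divisibility is delicate, and this is precisely where the hypothesis "$\pi$ well-defined" must be used in full force.
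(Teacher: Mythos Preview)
Your approach to axiom $(b_L)$ has the right idea but a misstated conclusion: the lcm $\delta=a\vee_L b$ computed in $M$ need \emph{not} lie in $\mathrm{Fix}(\varphi)$. What your iteration actually gives is that the chain $\delta\preceq\varphi(\delta)\preceq\varphi^2(\delta)\preceq\cdots$ is bounded above by any common right multiple $c\in\mathrm{Fix}(\varphi)$, hence stabilises at some $\varphi^{k_0}(\delta)\in\mathrm{Fix}(\varphi)$; \emph{that} iterate, not $\delta$ itself, is the lcm in $\mathrm{Fix}(\varphi)$ (the cofactor lies in $\mathrm{Fix}(\varphi)$ by left cancellativity). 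With this correction the argument is valid and in fact slightly more direct than the paper's, which in the case $1\in\varphi(S)$ does not compute the lcm at all but instead realises $\mathrm{Fix}(\varphi)$ as a retract of an auxiliary preGarside monoid and invokes a general lemma that retracts of preGarside monoids are preGarside.

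The genuine gap is your finite-generation argument. Your step~(1), ``reduce via $\mathrm{Fix}(\varphi)\subseteq\mathrm{Fix}(\varphi^p)$'', gives nothing: a submonoid of a finitely generated left preGarside monoid need not be finitely generated (this is the whole point of the theorem), so controlling $\mathrm{Fix}(\varphi^p)$ says nothing about $\mathrm{Fix}(\varphi)$. Your step~(2), the ``bounded-norm lemma for indecomposables'', is a hope rather than a mechanism: you provide no argument for why an indecomposable fixed point should have bounded norm, and your factorisation $x=u_0s_1u_1\cdots s_ku_k$ with $s_i\in S_1$ has the roles of $S_1$ and $S_2$ reversed (it is the $S_1$-blocks that $\varphi^{n_\varphi}$ annihilates). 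The paper's route is structurally different and this structure is what does the work. It splits into two cases. When $1\notin\varphi(S)$, one first shows $\mathrm{Fix}(\varphi)=\mathrm{Fix}(\varphi_0)$ where $\varphi_0=\varphi|_{M_0}$ permutes the atoms $S_0$; in that permutation case $\mathrm{Fix}(\varphi_0)$ is generated by the finite set of elements $\Delta(B)$, one for each $\varphi$-orbit $B\subseteq S_0$ admitting a common right multiple. When $1\in\varphi(S)$, the key identity is $\mathrm{Fix}(\varphi)=\varphi^{n_\varphi}\bigl(\mathrm{Fix}(\varphi_2)\bigr)$ where $\varphi_2=(\pi\circ\varphi)|_{M_2}$ satisfies $1\notin\varphi_2(S_2)$, reducing to the first case; finite generation of $\mathrm{Fix}(\varphi)$ then follows because it is the image of a finitely generated monoid. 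You have not identified either reduction, and without them there is no visible path from ``$\pi$ is well-defined'' to a norm bound on indecomposables.
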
~

The proof of this theorem is in the spirit of \cite[Theorem 3.1]{RoS}, where the particular case of trace monoids was considered. We will start though by proving some lemmas. Note that in the proof of the next lemma, where $\varphi|_S$ is a permutation (i.e. $\varphi$ is an automorphism), we do not need the additivity or the homogeneity of the norm $\nu$. In the case of preGarside monoids, this result is shown in \cite[Proposition 2.26]{BDM}.\\

\begin{lm}\label{lm1}
If the restriction of $\varphi$ to $S$ is a permutation, then $\mathrm{Fix}(\varphi)$ is a finitely generated left preGarside monoid.
\end{lm}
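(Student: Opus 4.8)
The plan is to verify, for $N:=\mathrm{Fix}(\varphi)$, the three defining features of a finitely generated left preGarside monoid, the only substantial point being finite generation. The starting observation is that, since $\varphi$ restricts to a permutation of the finite set $S$, this permutation has order dividing $p=|S|!$, so $\varphi^{p}=\mathrm{id}_M$ (both sides are endomorphisms agreeing on the generating set $S$); in particular $\varphi$ is an automorphism of $M$, and $\varphi$ and $\varphi^{-1}=\varphi^{p-1}$ both preserve $\preceq$, so $\varphi$ is an order-automorphism of $(M,\preceq)$ and therefore carries conditional joins to conditional joins. From this the two easy properties follow. Atomicity of $N$: the restriction of $\nu$ to $N$ is still a norm (it is positive off $1$ and superadditive because it is so on $M$), so $N$ is atomic — and, as the statement announces, only these properties of $\nu$ are used, never additivity or homogeneity. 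Left cancellativity of $N$ is inherited from $M$. For $(b_L)$: if $a,b\in N$ have a common right multiple in $N$, then $C=\{c\in M\mid a\preceq c,\ b\preceq c\}\neq\varnothing$, so $a\vee_L b$ exists in $M$; since $\varphi(a)=a$ and $\varphi(b)=b$, the order-automorphism $\varphi$ maps $C$ bijectively onto itself, hence fixes its least element, so $a\vee_L b\in N$ and it is a fortiori the least common right multiple of $a,b$ inside $N$.

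The heart of the argument is to bound the norms of the atoms of $N$; this suffices, because in a finitely generated atomic monoid there are only finitely many elements of norm at most any given $N_0$ (each is a product of at most $N_0/\min_{s\in S}\nu(s)$ atoms of $M$), and $N$ is generated by $\mathcal S(N)$. Put
\[
N_0:=\max\{\nu(\textstyle\bigvee_L T)\mid T\subseteq S\ \text{and}\ \textstyle\bigvee_L T\ \text{exists}\},
\]
a finite quantity depending only on $M$. I claim every $x\in N$ with $\nu(x)>N_0$ is a product of two non-trivial elements of $N$. Fix such an $x$, write $x=s_1\cdots s_n$ with $s_i\in S$, set $x_k=s_1\cdots s_k$ for $0\le k\le n$, and define
\[
z_k:=\textstyle\bigvee_L\{\varphi^{\,i}(x_k)\mid 0\le i\le p-1\}.
\]
Each $\varphi^{i}(x_k)$ divides $\varphi^{i}(x)=x$, so $x$ is a common right multiple of the family $\{\varphi^{i}(x_k)\}$, hence $z_k$ exists and $z_k\preceq x$. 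Because $\varphi^{p}=\mathrm{id}_M$, the set $\{\varphi^{i}(x_k)\mid 0\le i\le p-1\}$ is permuted by $\varphi$, so $\varphi(z_k)=z_k$, i.e. $z_k\in N$; writing $x=z_k w_k$, applying $\varphi$ and cancelling $z_k$ on the left gives $\varphi(w_k)=w_k$, so $w_k\in N$ too. Thus every $k$ yields a factorization $x=z_k w_k$ inside $N$; moreover $z_0=1$, $z_n=x$, and $z_k\preceq z_{k+1}$ (from $x_k\preceq x_{k+1}$, monotonicity of $\varphi^{i}$ and of joins, both joins existing below $x$).

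It remains to find a $k$ with $1\neq z_k\neq x$. Suppose not. Let $k^{*}$ be least with $z_{k^{*}}=x$; then $z_{k^{*}-1}\in\{1,x\}$, and minimality forces $z_{k^{*}-1}=1$, whence $x_{k^{*}-1}=\varphi^{0}(x_{k^{*}-1})\preceq z_{k^{*}-1}=1$, so $x_{k^{*}-1}=1$ and, the $s_i$ being atoms and $\nu$ superadditive, $k^{*}-1=0$. Hence $z_1=x$. But $z_1=\bigvee_L\{\varphi^{i}(s_1)\mid 0\le i\le p-1\}=\bigvee_L T$ for the subset $T=\{\varphi^{i}(s_1)\}\subseteq S$, so $\nu(x)=\nu(\bigvee_L T)\le N_0$, contradicting $\nu(x)>N_0$. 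Therefore some $z_{k_0}$ satisfies $1\neq z_{k_0}\neq x$, and $x=z_{k_0}w_{k_0}$ exhibits $x$ as a product of two elements of $N\setminus\{1\}$, so $x$ is not an atom of $N$. Consequently every atom of $N$ has norm at most $N_0$, $\mathcal S(N)$ is finite, and $N=\mathrm{Fix}(\varphi)$ is a finitely generated left preGarside monoid. The one step that requires genuine insight is the construction of the chain $(z_k)$ of $\varphi$-fixed left divisors interpolating between $1$ and $x$, together with the observation that if the chain jumps directly from $1$ to $x$ then $x$ must be a join of atoms; everything else is routine verification.
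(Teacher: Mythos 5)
Your proof is correct, and for the finite-generation step it takes a genuinely different (though closely related) route from the paper's. The paper exhibits an explicit generating set: it lets $\Sigma$ be the set of $\varphi$-orbits $B\subseteq S$ admitting a common right multiple, shows each $\Delta(B)$ is fixed, and proves $\mathrm{Fix}(\varphi)=\langle\Delta(B),\,B\in\Sigma\rangle^+$ by picking an atom $s\preceq u$, factoring out $\Delta(B_1)$ for the orbit $B_1$ of $s$, and inducting on $\nu(u)$. You instead bound the norms of the atoms of $\mathrm{Fix}(\varphi)$ by $N_0$ and invoke finiteness of the set of elements of bounded norm. The engine is the same in both cases --- the lcm of the $\varphi$-orbit of an atom dividing $x$ is a nontrivial fixed left divisor of $x$, whose cofactor is again fixed by cancellativity --- but the paper's version is sharper (an explicit generating set rather than bare finiteness) while yours isolates cleanly why no atom of $\mathrm{Fix}(\varphi)$ can be long. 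Note also that your chain $(z_k)$ is more machinery than you need: $z_1=\Delta(B_1)$ already satisfies $1\neq z_1$ and, when $\nu(x)>N_0$, $z_1\neq x$, so the decomposition $x=z_1w_1$ settles the claim without the interpolation or the minimal-$k^*$ argument. Your treatment of $(b_L)$ via the order-automorphism $\varphi$ (fixing the least element of the $\varphi$-stable set $C$) is a nice alternative to the paper's norm computation along $\delta\preceq\varphi(\delta)\preceq\cdots\preceq\varphi^p(\delta)=\delta$, and, like the paper, you correctly avoid using additivity or homogeneity of $\nu$. One small gloss: the phrase ``a fortiori the least common right multiple inside $N$'' hides the need to check that divisibility relations $\delta\preceq c$ and $a\preceq\delta$ witnessed in $M$ are witnessed by cofactors lying in $N$; this follows from the same left-cancellation argument you use for $w_k$, but it should be said.
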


\begin{proof}
Since $\mathrm{Fix}(\varphi)\subseteq M$, it is clear that property ($a_L$) holds for $\mathrm{Fix}(\varphi)$. Let $a,b\in\mathrm{Fix}(\varphi)$ be such that the set $\{c\in\mathrm{Fix}(\varphi)\mid a\preceq c~\text{and}~b\preceq c\}$ is nonempty, and set $\delta=a\vee b$ their (left) lcm in $M$. We have $\varphi(a)=a\preceq\varphi(\delta)$ and $\varphi(b)=b\preceq\varphi(\delta)$, then $\delta\preceq\varphi(\delta)$. Thus $\delta\preceq\varphi(\delta)\preceq\cdots\preceq\varphi^p(\delta)$, and so $\nu(\delta)\leq\nu(\varphi(\delta))\leq\cdots\leq\nu(\varphi^p(\delta))$. Or $\varphi|_S$ is a permutation, and $p=|S|!$, then $\varphi^p=\mathrm{Id}_M$. Therefore $\nu(\delta)=\nu(\varphi(\delta))$, with $\delta\preceq\varphi(\delta)$. Thus, we have $\delta=\varphi(\delta)$, and so $\delta\in\mathrm{Fix}(\varphi)$. Let $c\in\mathrm{Fix}(\varphi)$ such that $a\preceq c$ and $b\preceq c$. Write $c=\delta\delta'$ with $\delta'\in M$. We have $c,\delta\in\mathrm{Fix}(\varphi)$, then, by cancellativity, $\delta'\in\mathrm{Fix}(\varphi)$. Hence property ($b_L$) holds, and so $\mathrm{Fix}(\varphi)$ is a left preGarside monoid.\\

Let $\Sigma$ be the set of all $\varphi$-orbits $B$ in $S$ that have a right common multiple (and
therefore a least right common multiple $\Delta(B)$ (\cite[Lemma 2.1]{GoP})). We claim that
\begin{equation}\label{eq1}
\mathrm{Fix}(\varphi)=\langle\Delta(B), B\in\Sigma\rangle^+.
\end{equation}

Let $B\in\Sigma$, and $b\in B$. Since $B$ is a $\varphi$-orbit, then there exists $a\in B$ such that $b=\varphi(a)$. But $a\preceq\Delta(B)$, so $b=\varphi(a)\preceq\varphi(\Delta(B))$. Thus $b\preceq\varphi(\Delta(B))$ for every $b\in B$, and then $\Delta(B)\preceq\varphi(\Delta(B))$. Therefore, as above, we have $\Delta(B)=\varphi(\Delta(B))$. Hence $\langle\Delta(B),B\in\Sigma\rangle^+\subseteq\mathrm{Fix}(\varphi)$.\\
Conversely, let $u\in\mathrm{Fix}(\varphi)$ such that $u\neq1$, and let $s\in S$ such that $s\preceq u$; then $u$ is left divisible by all the elements in the $\varphi$-orbit $B_1$ of $s$, so is left divisible by their lcm $u_1:=\Delta(B_1)$ (which exists). Let $v\in M$ such that $u=u_1v$. Since $u,u_1\in\mathrm{Fix}(\varphi)$, we have by cancellativity, $v\in\mathrm{Fix}(\varphi)$. By induction on $\nu(u)$, we get that $v\in\langle\Delta(B)\mid B\in\Sigma\rangle^+$. Thus $u\in\langle\Delta(B)\mid B\in\Sigma\rangle^+$ and so (\ref{eq1}) holds. The set $S$ is finite, then so is $\Sigma$. Therefore $\mathrm{Fix}(\varphi)$ is finitely generated.
\end{proof}~~

\begin{ex}~\label{ex3}
\begin{enumerate}
\item Let $M=\langle s,t,u\mid st=ts,~sus=usu,~tut=utu\rangle^+$, and $\varphi\in\mathrm{End}(M)$ such that $\varphi(u)=u$, $\varphi(t)=s$ and $\varphi(s)=t$. The monoid $M$ is an Artin monoid, so it satisfies all properties of preGarside monoids by \cite{BrS}. Then we have $\Sigma=\{\{u\},\{s,t\}\}$, and $\mathrm{Fix}(\varphi)=\langle u,st\rangle^+$.
\item Let $M=\langle a_1,b_1,a_2,b_2\mid a_1b_1a_1=b_1^2,~a_2b_2a_2=b_2^2,~a_1a_2=a_2a_1,~b_1b_2=b_2b_1, ~a_1b_2=b_2a_1,~b_1a_2=a_2b_1\rangle^+$, and $\varphi\in\mathrm{End}(M)$ such that $\varphi(a_1)=a_2$, $\varphi(a_2)=a_1$, $\varphi(b_1)=b_2$ and $\varphi(b_2)=b_1$. The monoid $M$ is a preGarside monoid, as a direct product $M=\langle a_1,b_1\mid a_1b_1a_1=b_1^2\rangle^+\times\langle a_2,b_2\mid a_2b_2a_2=b_2^2\rangle^+$ of two preGarside monoids (see \cite{DeP}). Then we have $\Sigma=\{\{a_1,a_2\},\{b_1,b_2\}\}$, and $\mathrm{Fix}(\varphi)=\langle a_1a_2,b_1b_2\rangle^+$.
\end{enumerate}
\end{ex}~

\begin{lm}\label{lm2}
If $1\notin\varphi(S)$, then
\begin{equation}\label{eq2}
S_0=S\cap\mathrm{Fix}(\varphi^p)=S\cap\varphi^p(S),
\end{equation} and 
\begin{equation}\label{eq3}
\varphi(S_0)=S_0.
\end{equation}
\end{lm}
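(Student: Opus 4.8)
The plan is to reduce everything to one elementary estimate. Since $1\notin\varphi(S)$ and $\nu$ is additive and homogeneous, writing $c$ for the common value $\nu(s)$, $s\in S$, each atom $s$ has $\nu(\varphi(s))\ge c$ (as $\varphi(s)$ is a nonempty product of atoms), hence $\nu(\varphi(m))\ge\nu(m)$ for every $m\in M$ by additivity; moreover $\nu(m)=c$ forces $m$ to be an atom. First I would record the consequences for a periodic atom $s\in S_0$, say $\varphi^n(s)=s$ with $n\ge1$: then $\nu(s)=\nu(\varphi^n(s))\ge\nu(\varphi^k(s))\ge\nu(s)$ for $0\le k\le n$, so each $\varphi^k(s)$ has norm $c$ and is therefore an atom (for $k\le n$ directly, and for larger $k$ by periodicity); since $\varphi^n(\varphi^k(s))=\varphi^k(s)$, in fact $\varphi^k(s)\in S_0$ for all $k\ge0$. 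In particular $\varphi(S_0)\subseteq S_0$, while $s=\varphi(\varphi^{n-1}(s))$ with $\varphi^{n-1}(s)\in S_0$ gives the reverse inclusion; this proves \eqref{eq3}, and exhibits $\varphi$ as a permutation of the finite set $S_0$.

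Next I would deduce the first equality in \eqref{eq2}. The inclusion $S\cap\mathrm{Fix}(\varphi^p)\subseteq S\cap\mathrm{Per}(\varphi)=S_0$ is immediate. Conversely, since $\varphi|_{S_0}$ is a permutation of $S_0$, its order divides $|S_0|!$, which divides $|S|!=p$; hence $\varphi^p$ is the identity on $S_0$, i.e. $S_0\subseteq S\cap\mathrm{Fix}(\varphi^p)$. The ``$\subseteq$'' half of the second equality is then free: if $s\in S_0$ then $s=\varphi^p(s)\in S\cap\varphi^p(S)$.

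The remaining inclusion $S\cap\varphi^p(S)\subseteq S_0$ is the heart of the matter. Take $s\in S$ of the form $s=\varphi^p(t)$ with $t\in S$. Since $k\mapsto\nu(\varphi^k(t))$ is non-decreasing and $\nu(t)=\nu(\varphi^p(t))=c$, all of $t,\varphi(t),\dots,\varphi^p(t)$ have norm $c$ and are atoms. These are $p+1$ atoms in $S$, and $|S|\le|S|!=p<p+1$, so by the pigeonhole principle $\varphi^i(t)=\varphi^j(t)$ for some $0\le i<j\le p$; then $a:=\varphi^i(t)\in S$ satisfies $\varphi^{j-i}(a)=\varphi^j(t)=\varphi^i(t)=a$, so $a\in S_0$, and finally $s=\varphi^{p-i}(a)\in S_0$ by \eqref{eq3}. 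I expect this last step to be the only real obstacle: $\varphi$ need not be injective on $S$, so one cannot simply ``pull $s$ back along $\varphi^p$'' and land on a periodic atom — the remedy is to locate a genuinely periodic atom \emph{inside} the finite orbit segment $t,\varphi(t),\dots,\varphi^p(t)$ by counting, and then transport it forward using \eqref{eq3}. Everything else is the norm bookkeeping above together with the finiteness of $S$.
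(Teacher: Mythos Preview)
Your proof is correct and follows essentially the same route as the paper's: the norm inequality $\nu(\varphi(m))\ge\nu(m)$ keeps iterates of atoms inside $S$, and then pigeonhole on the $p+1$ elements $t,\varphi(t),\dots,\varphi^p(t)$ does the work. The only cosmetic difference is ordering: you establish \eqref{eq3} first and invoke it for the final inclusion $S\cap\varphi^p(S)\subseteq S_0$, whereas the paper proves \eqref{eq2} directly (applying $\varphi^{p-i}$ to $\varphi^i(b)=\varphi^j(b)$ to see at once that $s=\varphi^p(b)$ itself is periodic) and derives \eqref{eq3} afterwards.
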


\begin{proof}
Recall that $S_0=S\cap\mathrm{Per}(\varphi)$. Let $a\in S_0$ and $m=\mathrm{min}\{k\in\mathbb{N}^*\slash\varphi^k(a)=a\}$. Since $1\notin \varphi(S)$ and $\nu$ is additive and homogeneous, we have $\nu(\varphi(u))\geq\nu(u)$ for every $u\in M$. Hence $\varphi^n(a)\in S$ for every $n\in\mathbb{N}$. Assume that there exist $0<i<j\leq m$ such that $\varphi^i(a)=\varphi^j(a)$. So $\varphi^{m-j}\circ\varphi^i(a)=\varphi^m(a)$. Then $\varphi^{m-(j-i)}(a)=a$ and $m-(j-i)<m$, which contradicts the definition of $m$. Thus $m=\#\{a,\varphi(a),\cdots,\varphi^{m-1}(a)\}\leq|S|$, and then $m$ divides $p$. Hence $\varphi^p(a)=a$. So $S_0\subseteq S\cap\mathrm{Fix}(\varphi^p)$, and we clearly have $S\cap\mathrm{Fix}(\varphi^p)\subseteq S\cap\varphi^p(S)$.

Let $a\in S\cap\varphi^p(S)$ and $b\in S$ such that $\varphi^p(b)=a$. Since $1\notin\varphi(S)$, using that $\nu$ is additive and homogeneous, and $\varphi^p(b)=a$, we get $\{b,\varphi(b),\cdots,\varphi^p(b)\}\subseteq S$. The inequality $|S|<p+1$ yields $\varphi^i(b)=\varphi^j(b)$ for some $0\leq i<j\leq p$. By composing with $\varphi^{p-i}$, we get $a=\varphi^{j-i}(a)$. Then $a\in\mathrm{Per}(\varphi)\cap S=S_0$, and therefore (\ref{eq2}) holds.\\

Let $a\in S_0$. As before, we have $\{a,\varphi(a),\dots,\varphi^{p-1}(a)\}\subseteq S_0=S\cap\mathrm{Fix}(\varphi^p)$. On the one hand, we have $\varphi(a)\in S_0$, then $\varphi(S_0)\subseteq S_0$. On the other hand, we have $a=\varphi(\varphi^{p-1}(a))$ and $\varphi^{p-1}(a)\in S_0$, then $\varphi(S_0)\supseteq S_0$, and so (\ref{eq3}) holds.
\end{proof}~

Let $M_0$, $M_1$ and $M_2$, be the submonoids of $M$ generated by $S_0$, $S_1$ and $S_2$ respectively. By definition of $S_1$ and atomicity of $M$, note that $\varphi^{n_\varphi}(M_1)=\{1\}$ and $\varphi(M_1)\subseteq M_1$, which we will be using more than once.

\begin{lm}\label{lm3}
If $1\notin\varphi(S)$, then $M_0=\mathrm{Fix}(\varphi^p)$, it is a finitely generated left preGarside monoid, and the restriction of $\nu$ to $M_0$ is additive and homogeneous.
\end{lm}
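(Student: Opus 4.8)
\textbf{Proof plan for Lemma \ref{lm3}.}

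The plan is to prove the three assertions in order: first the set equality $M_0 = \mathrm{Fix}(\varphi^p)$, then that $M_0$ is a finitely generated left preGarside monoid, and finally the norm claim. For the inclusion $M_0 \subseteq \mathrm{Fix}(\varphi^p)$, I would start from Lemma \ref{lm2}: since $S_0 \subseteq \mathrm{Fix}(\varphi^p)$ by \eqref{eq2} and $\mathrm{Fix}(\varphi^p)$ is a submonoid, the submonoid $M_0$ generated by $S_0$ is contained in it. For the reverse inclusion, let $u \in \mathrm{Fix}(\varphi^p)$; I want to show every atom $s$ left-dividing $u$ actually lies in $S_0$, and then conclude by induction on $\nu(u)$ using left cancellativity (exactly as in the proof of Lemma \ref{lm1}). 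If $s \preceq u$, write $u = sv$; applying $\varphi^p$ gives $u = \varphi^p(u) = \varphi^p(s)\varphi^p(v)$. Since $1 \notin \varphi(S)$ and $\nu$ is additive and homogeneous, $\varphi^p(s) \in S$ is again an atom, so $\varphi^p(s)$ is an atom left-dividing $u$. Iterating, $\varphi^{kp}(s) \preceq u$ for all $k$; since the atoms dividing $u$ form a finite set (their norm is bounded by $\nu(u)$ and $\nu$ is homogeneous, or more simply there are only finitely many atoms), two of the $\varphi^{kp}(s)$ coincide, whence $s \in \mathrm{Per}(\varphi) \cap S = S_0$ by the argument already used in Lemma \ref{lm2}. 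Then $v = u$ with $s$ removed is again fixed by $\varphi^p$ (cancellativity), has strictly smaller norm, so by induction $v \in M_0$, giving $u = sv \in M_0$.

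Once $M_0 = \mathrm{Fix}(\varphi^p)$ is established, finite generation and the left preGarside property follow by a reduction to the already-proved Lemma \ref{lm1}: on the submonoid $M_0$, the restriction $\varphi|_{S_0}$ is a permutation of the finite set $S_0$ by \eqref{eq3} in Lemma \ref{lm2}. One has to check that $M_0$ is itself a left preGarside monoid so that Lemma \ref{lm1} applies, and that $\mathcal{S}(M_0) = S_0$. Atomicity and left cancellativity of $M_0$ are inherited from $M$ (restrict $\nu$), and for property ($b_L$) one argues that if $a, b \in M_0$ have a common right multiple in $M_0$, then their join $a \vee b$ computed in $M$ lies in $M_0$: indeed $M_0 = \mathrm{Fix}(\varphi^p)$ is closed under the join exactly as in the first paragraph of the proof of Lemma \ref{lm1}, replacing $\varphi$ by $\varphi^p$ and using $(\varphi^p)^p = \mathrm{Id}$ on $M_0$ — wait, more carefully: $a \vee b \preceq \varphi^p(a \vee b)$ since $a = \varphi^p(a) \preceq \varphi^p(a\vee b)$ and likewise for $b$, and then the norm-nondecreasing argument together with the fact that $\varphi^p$ restricted to the finitely generated $M_0$ eventually stabilizes forces $a \vee b = \varphi^p(a \vee b)$. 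Applying Lemma \ref{lm1} to the endomorphism $\varphi^p|_{M_0}$ of $M_0$ (whose restriction to $\mathcal{S}(M_0) = S_0$ is a permutation) yields that $\mathrm{Fix}(\varphi^p|_{M_0}) = M_0$ is a finitely generated left preGarside monoid.

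Finally, the restriction of $\nu$ to $M_0$ is additive because $\nu$ is additive on all of $M$, and it is homogeneous because $S_0 \subseteq S = \mathcal{S}(M)$ and $\nu$ takes the same value on all elements of $S$, hence in particular on all atoms of $M_0$; one only needs $\mathcal{S}(M_0) = S_0$, which holds since $M_0$ is atomic and $S_0$ is a generating set consisting of atoms of $M_0$ (an element of $S_0$, being an atom of $M$, is a fortiori an atom of the submonoid $M_0$). The main obstacle I anticipate is the verification that $a \vee b$ (the join computed in the ambient $M$) belongs to $M_0$ whenever $a,b \in M_0$ admit a common right multiple \emph{within} $M_0$ — that is, checking that the join operation is really inherited so that Lemma \ref{lm1} can be invoked cleanly; this requires combining the divisibility inequality $\delta \preceq \varphi^p(\delta)$ with the stabilization of the increasing norm sequence $\nu(\varphi^{kp}(\delta))$, which is bounded because $\varphi^{kp}(\delta)$ ranges over elements of the finitely generated monoid $M_0$ all of whose atoms have been shown to lie in $S_0$.
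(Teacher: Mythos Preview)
Your overall structure is sound, and several pieces (the inclusion $M_0 \subseteq \mathrm{Fix}(\varphi^p)$, the inheritance of atomicity and left cancellativity, and the norm claim) are correct. There are, however, two genuine gaps.

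\textbf{Gap in $\mathrm{Fix}(\varphi^p) \subseteq M_0$.} Your pigeonhole argument shows that two iterates $\varphi^{ip}(s)$ and $\varphi^{jp}(s)$ coincide, but this only yields that $\varphi^{ip}(s)$ is periodic, not that $s$ itself is. The appeal to Lemma~\ref{lm2} does not close this: there the element shown to be periodic is $a = \varphi^p(b)$, which sits at the \emph{end} of the sequence $b,\varphi(b),\dots,\varphi^p(b)$, so one can compose forward to reach it; your $s$ sits at the beginning, and nothing forces the orbit to return to it. So the assertion that every atom left-dividing $u$ lies in $S_0$ is unproved. The paper avoids this entirely: write $u = s_1\cdots s_n$ with $s_i \in S$; comparing $\nu(u)=\nu(\varphi^p(u))$ with additivity and homogeneity forces each $\varphi^p(s_i)\in S$, hence $\varphi^p(s_i)\in S\cap\varphi^p(S)=S_0$ by \eqref{eq2}, and therefore $u=\varphi^p(s_1)\cdots\varphi^p(s_n)\in M_0$. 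No claim about arbitrary atom divisors of $u$ is needed.

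\textbf{Gap in property $(b_L)$.} The reduction to Lemma~\ref{lm1} is circular: that lemma requires the ambient monoid (here $M_0$) to already be left preGarside, and since $\varphi^p|_{S_0}=\mathrm{id}_{S_0}$ it would return $\mathrm{Fix}(\varphi^p|_{M_0})=M_0$ as output, i.e.\ exactly the hypothesis. Your direct attempt to show $\delta=a\vee b\in M_0$ also breaks down. You bound $\nu(\varphi^{kp}(\delta))$ by asserting that $\varphi^{kp}(\delta)$ lies in $M_0$, but that membership is precisely what is in question. The correct bound, which the paper uses, comes from the given common multiple $c\in M_0$: since $\delta\preceq c$ and $c\in\mathrm{Fix}(\varphi^p)$, one has $\varphi^{kp}(\delta)\preceq\varphi^{kp}(c)=c$, hence $\nu(\varphi^{kp}(\delta))\leq\nu(c)$. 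Even with this bound, the increasing chain $\delta\preceq\varphi^p(\delta)\preceq\varphi^{2p}(\delta)\preceq\cdots$ only stabilises from some rank $N$, giving $\varphi^{Np}(\delta)\in\mathrm{Fix}(\varphi^p)=M_0$; there is no reason for $\delta$ itself to be fixed. The paper then shows (working with the cofactors $a''$, $b''$ of $a$, $b$ in $\delta$) that this stabilised iterate, not $\delta$, is the least element of $\Gamma_0$, and checks via cancellativity that the complement of $\psi^N(\delta)$ in any $c\in\Gamma_0$ again lies in $M_0$. That extra verification is essential and is missing from your plan.
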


\begin{proof}
The order of every periodic point of $S$ divides $p=|S|!$, then $M_0\subseteq\mathrm{Fix}(\varphi^p)$. Let $s_1,\dots,s_n\in S$ such that $\varphi^p(s_1\cdots s_n)=s_1\cdots s_n$. In view of the homogeneity and additivity of $\nu$, and the fact that $1\notin\varphi(S)$, we have $\varphi(s_i)\in S$ for all $i$. By (\ref{eq2}), $S_0=S\cap\varphi^p(S)$. Then $\varphi(s_1\cdots s_n)\in\langle S_0\rangle^+$, which means that $s_1\cdots s_n\in M_0$. Thus we have the equality $M_0=\mathrm{Fix}(\varphi^p)$.\\
The submonoid $M_0$ is finitely generated by definition, and it is atomic and left cancellative because $M_0\subseteq M$. The restriction $\nu|_{M_0}$ of $\nu$ is additive, and since $\mathcal{S}(M_0)=S_0$, it is also homogeneous.\\
Let $a,b$ lie in $M_0$ such that the set $\Gamma_0=\{c\in M_0\mid a\preceq c~\text{and}~b\preceq c\}$ is nonempty, and set $\delta=a\vee b$ their lcm in $M$. Set $\psi=\varphi^p$. Since $S_0=S\cap\mathrm{Fix}(\psi)$, we have $\psi|_{M_0}=\mathrm{Id}_{M_0}$. Let $c$ lie in $\Gamma_0$ and $a',a''$ belong to $M$ such that $c=aa'$ and $\delta=aa''$. One has $a,c\in\mathrm{Fix}(\psi)$, then $a'=\psi(a')$ by cancellativity. We have $\delta\preceq c$, then $a''\preceq a'$, and so $\psi^n(a'')\preceq a'$ for all $n\in\N$. The sequence of integers $(\nu(\psi^n(a'')))_{n\in\N}$ is increasing because of the additivity and homogeneity of $\nu$ and the fact that $1\notin\varphi(S)$. On the other hand, it is bounded by $\nu(a')$ because $\psi^n(a'')\preceq a'$ for all $n$. Thus $(\nu(\psi^n(a'')))_{n\in\N}$ is stationary from some rank $m_a\in\N^*$. Write $\psi^{m_a}(a'')=s_1\cdots s_r$ with $s_1,\dots,s_r\in S$. Since $\nu(\psi^{m_a}(a''))=\nu(\psi^{m_a+1}(a''))$, then $\psi(s_i)\in S$ for all $i$. So $\{s_i,\varphi(s_i),\dots,\varphi^p(s_i)\}\subseteq S$, and therefore $\psi(s_i)\in S_0$ for all $i$. Thus $\psi^{m_a+1}(a'')\in M_0$.\\
Similarly, for $b',b''\in M$ such that $c=bb'$ and $\delta=bb''$, we have some rank $m_b\in\N^*$ such that $\psi^{m_b+1}(b'')$ is in $M_0$. The inclusion $M_0\subseteq\mathrm{Fix}(\psi)$ yields $\psi^{m_a+1}(a'')=\psi^{m_a+m_b}(a'')$ and $\psi^{m_b+1}(b'')=\psi^{m_b+m_a}(b'')$. We have $\psi^{m_a+m_b}(\delta)=a\psi^{m_a+1}(a'')=b\psi^{m_b+1}(b'')$, then $\psi^{m_a+m_b}(\delta)\in\Gamma_0$. If $c\in\Gamma_0$, then $\delta\preceq c$, and so $\psi^{m_a+m_b}(\delta)\preceq\psi^{m_a+m_b}(c)=c$. Write $c=\psi^{m_a+m_b}(\delta)c'$ with $c'\in M$. Both $c$ and $\psi^{m_a+m_b}(\delta)$ are in $\mathrm{Fix}(\psi)$, then by cancellativity, $\psi(c')=c'$. Thus, $c'\in M_0$ and $\psi^{m_a+m_b}(\delta)$ is the least element of $\Gamma_0$. Whence property ($b_L$) holds, so $M_0$ is a left preGarside monoid.
\end{proof}~

\begin{lm}\label{lm4}
If $\pi$ is well-defined, then $M_2$ is a finitely generated left preGarside monoid, and the restriction of $\nu$ to $M_2$ is additive and homogeneous.
\end{lm}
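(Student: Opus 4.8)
The plan is to prove Lemma~\ref{lm4} along the same lines as Lemma~\ref{lm3}, exploiting the fact that $\pi$ being well-defined gives us a morphism $\pi = \pi_{S_2}\colon M \to M$ that ``kills'' the part of an element lying in $M_1$ while leaving its $S_2$-content intact. First I would record the basic properties of $S_2$ and $M_2$: an atom $s\in S_2$ satisfies $\varphi^{n_\varphi}(s)\neq 1$, hence $\varphi^k(s)\neq 1$ for all $k$; since $\nu$ is additive and homogeneous this forces $\nu(\varphi(s))\geq \nu(s)>0$, so in fact $\varphi$ never sends a nonempty $S_2$-word to~$1$, and more generally $\pi$ composed with $\varphi$ behaves well on $M_2$. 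The first real claim is that $\varphi$ restricts to an endomorphism of $M_2$ in the appropriate sense: since $\varphi(s)$ for $s\in S_2$ is a product of atoms, applying $\pi$ we get a well-defined endomorphism $\bar\varphi := \pi\circ\varphi|_{M_2}$ of $M_2$ (here we use that $\pi(M)=M_2$ and $\pi|_{M_2}=\mathrm{Id}$, which follows from $\pi$ being well-defined). I would check that $\pi$ restricted to $M_2$ is the identity and that $\pi(M_1)=\{1\}$; atomicity and the definition of $\pi_{S_2}^*$ make these formal.

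Next, the atomicity, left cancellativity, finite generation, and the additivity/homogeneity of $\nu|_{M_2}$ are all immediate from $M_2\subseteq M$ and $\mathcal S(M_2)=S_2$ (the last equality because $S_2\subseteq S=\mathcal S(M)$ and no atom of $M$ is a product of two nontrivial elements, so $S_2$ consists of atoms of $M_2$; conversely any atom of $M_2$ must be an atom of $M$ lying in $S_2$). So property ($a_L$) for $M_2$ is free. The substantive point is property ($b_L$): given $a,b\in M_2$ with a common right multiple inside $M_2$, I must produce a least one. As in Lemma~\ref{lm3}, I would take $\delta = a\vee b$ the lcm in $M$, and show $\pi(\delta)\in M_2$ is the lcm of $a$ and $b$ inside $M_2$. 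Since $a,b\in M_2$ we have $\pi(a)=a$, $\pi(b)=b$, and applying $\pi$ to $\delta=ax=by$ gives $\pi(\delta)=a\pi(x)=b\pi(y)$, so $\pi(\delta)$ is a common right multiple of $a$ and $b$ in $M_2$. For minimality: if $c\in M_2$ with $a\preceq c$ and $b\preceq c$, then $\delta\preceq c$ in $M$, say $c=\delta z$; apply $\pi$ to get $c = \pi(c) = \pi(\delta)\pi(z)$, so $\pi(\delta)\preceq c$ in $M_2$ — provided $\pi(z)\in M_2$, which holds automatically since $\pi$ maps into $M_2$. Finally one needs $\pi(\delta)$ itself to be a common right multiple of $a,b$ that is $\preceq$ every such $c$, which is exactly what we just showed; and $\pi(\delta)\in M_2$ by construction. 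Hence ($b_L$) holds.

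One subtlety I would address carefully: the morphism $\pi$ is a priori only a monoid endomorphism of $M$, not an order-preserving retraction in any a priori sense, so I must make sure all the divisibility manipulations ``$a\preceq c \Rightarrow \pi(a)\preceq \pi(c)$'' are legitimate — but these follow directly from $\pi$ being a monoid morphism ($c=ax \Rightarrow \pi(c)=\pi(a)\pi(x)$). The genuinely delicate issue, and what I expect to be the main obstacle, is handling the case $1\in\varphi(S)$ where $\pi$ may fail to be canonically the identity on all of $M_2$ unless one argues that $\pi|_{M_2} = \mathrm{Id}_{M_2}$; here the key is that for $s\in S_2$, $\pi(s)=s$ by definition of $\pi_{S_2}^*$, and since $\pi$ is assumed well-defined this lifts consistently. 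I would also double-check that $\pi(\delta)$, which is what we use as the candidate lcm, actually lies below $c$ and not merely is a common multiple — i.e. that we have not accidentally enlarged it; since $\pi$ is a morphism and $\pi$ applied to the factorization $c = \delta z$ directly exhibits $c = \pi(\delta)\pi(z)$, there is no enlargement. Modulo these checks the lemma follows, completing the toolbox needed to assemble the proof of Theorem~\ref{thm1} from $M_0$, $M_1$, $M_2$.
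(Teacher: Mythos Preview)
Your proof of ($b_L$) is correct but takes a different route from the paper's. You use $\pi(\delta)$ as the candidate lcm in $M_2$ and verify directly, via the retraction property $\pi|_{M_2}=\mathrm{Id}_{M_2}$, that it is a common right multiple dividing every $c\in\Gamma_2$; this is essentially an inline instance of Lemma~\ref{lm5} with $f=\pi$ and $g$ the inclusion. The paper instead shows that $\delta$ itself already lies in $M_2$: writing $c=aa'$, $\delta=aa''$, $a'=a''\hat a$ with $c\in\Gamma_2$, additivity and homogeneity of $\nu$ give $\nu(\pi(u))\le\nu(u)$ with equality iff $u\in M_2$; since $a,c\in M_2$ one deduces $\nu(a'')=\nu(\pi(a''))$, hence $a''\in M_2$, and likewise $b''\in M_2$, so $\delta\in\Gamma_2$. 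Your argument is more conceptual and uses the norm only to establish $\pi|_{M_2}=\mathrm{Id}$; the paper's argument has the side benefit of showing the lcm in $M$ and in $M_2$ literally coincide. A posteriori the two candidates agree anyway, since $\pi(\delta)\in\Gamma_2$ forces $\delta\preceq\pi(\delta)$ while $\nu(\pi(\delta))\le\nu(\delta)$. Your opening paragraph setting up $\bar\varphi=\pi\circ\varphi|_{M_2}$ and the nonvanishing of $\varphi$ on $S_2$ is correct but extraneous here; that material belongs to the proof of Theorem~\ref{thm1}, not to this lemma.
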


\begin{proof}
As in the previous proof, the submonoid $M_2$ is atomic, left cancellative and finitely generated, and the restriction $\nu|_{M_2}$ is additive and homogeneous.\\
Let $a,b$ lie in $M_2$ such that the set $\Gamma_2=\{c\in M_2\mid a\preceq c~\text{and}~b\preceq c\}$ is nonempty, and set $\delta=a\vee b$ their lcm in $M$. Let $c\in\Gamma_2$ and $a',a''\in M$ such that $c=aa'$ and $\delta=aa''$. We have $\delta\preceq c$, then $a''\preceq a'$. So write $a'=a''\hat{a}$ with $\hat{a}\in M$. By the homogeneity and additivity of $\nu$, for all $u\in M$ we have $\nu(\pi(u))\leq\nu(u)$, and ~$\nu(\pi(u))=\nu(u)~\Leftrightarrow~\pi(u)=u~\Leftrightarrow~u\in M_2~$. We have $\nu(c)=\nu(a)+\nu(a'')+\nu(\hat{a})$ and $\nu(\pi(c))=\nu(\pi(a))+\nu(\pi(a''))+\nu(\pi(\hat{a}))$. But $a$ and $c$ belong to $M_2$, then $\nu(a'')+\nu(\hat{a})=\nu(\pi(a''))+\nu(\pi(\hat{a}))$. So $\nu(\pi(a''))=\nu(a'')$ because $\nu(\pi(u))\leq\nu(u)$ for all $u\in M$. Then $a''\in M_2$. Similarly, there is $b''$ in $M_2$ such that $\delta=bb''$. Thus $\delta$ is the least element of $\Gamma_2$, whence property ($b_L$). Therefore $M_2$ is a left preGarside monoid.
\end{proof}~

\begin{lm}\label{lm5}
Let $N_1$ be a left preGarside monoid, and $N_2$ a monoid. Assume there exists a morphism $f:N_1\rightarrow N_2$ that is a retraction. Then $N_2$ is left preGarside.
\end{lm}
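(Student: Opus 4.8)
The plan is to check the two defining conditions ($a_L$) and ($b_L$) directly for $N_2$, using the section that accompanies the retraction: by hypothesis there is a morphism $g:N_2\to N_1$ with $f\circ g=\mathrm{Id}_{N_2}$, and then $g$ is injective and $f$ is surjective. All the work will consist in transporting divisibility data between $N_1$ and $N_2$ through $g$ and $f$.

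First I would establish ($a_L$). Left cancellativity of $N_2$ follows from that of $N_1$: from $cx=cy$ in $N_2$ one gets $g(c)g(x)=g(c)g(y)$, hence $g(x)=g(y)$, hence $x=f(g(x))=f(g(y))=y$. For atomicity, let $\nu_1$ be a norm on $N_1$ and set $\nu_2=\nu_1\circ g$; it satisfies $\nu_2(xy)=\nu_1(g(x)g(y))\geq\nu_1(g(x))+\nu_1(g(y))=\nu_2(x)+\nu_2(y)$ because $g$ is a morphism, and $\nu_2(x)>0$ for $x\neq1$ since $g(x)\neq1$ (otherwise $x=f(g(x))=1$). As recalled in Section~1, the existence of such a norm already makes $\preceq$ a partial order on $N_2$.

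Next, ($b_L$). Let $a,b\in N_2$ and suppose $\Gamma:=\{c\in N_2\mid a\preceq c\text{ and }b\preceq c\}$ is nonempty, say $c_0\in\Gamma$. Writing $c_0=ak=b\ell$ in $N_2$ and applying $g$ gives $g(a)\preceq g(c_0)$ and $g(b)\preceq g(c_0)$ in $N_1$, so the set of common right multiples of $g(a),g(b)$ in $N_1$ is nonempty and the left lcm $\delta:=g(a)\vee_L g(b)$ exists in $N_1$. The candidate for $a\vee_L b$ in $N_2$ is $\gamma:=f(\delta)$. From $\delta=g(a)k'$ ($k'\in N_1$) we get, applying $f$, that $\gamma=a\,f(k')$, so $a\preceq\gamma$; likewise $b\preceq\gamma$; hence $\gamma\in\Gamma$. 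Conversely, if $c\in\Gamma$, the same argument as for $c_0$ gives $g(a)\preceq g(c)$ and $g(b)\preceq g(c)$, hence $\delta\preceq g(c)$ by minimality of $\delta$; writing $g(c)=\delta n$ and applying $f$ gives $c=\gamma\,f(n)$, so $\gamma\preceq c$. Thus $\gamma$ is the least element of $\Gamma$, which is property ($b_L$); combined with the previous paragraph, $N_2$ is a left preGarside monoid.

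There is essentially no real obstacle here; the one point worth flagging is that $\delta$ lives in $N_1$ and need not belong to the image of $g$, so it cannot serve as the lcm in $N_2$ itself — instead one pushes it down via $f$, and both halves of the lcm property ("$a,b$ divide it" and "it divides every common multiple") survive because $f$ is a morphism with $f\circ g=\mathrm{Id}$. Note also that no additivity or homogeneity of norms is needed, and that left cancellativity of $N_1$ is only used for ($a_L$), not for ($b_L$).
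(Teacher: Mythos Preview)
Your proof is correct and follows essentially the same route as the paper: take the section $g$, use it to transport the lcm $\delta=g(a)\vee g(b)$ from $N_1$, and push it back down via $f$ to obtain the least element $f(\delta)$ of $\Gamma$. The only difference is that the paper dispatches ($a_L$) in one line (``$N_2$ embeds in $N_1$, so $N_2$ is atomic and left cancellative''), whereas you spell out the norm $\nu_2=\nu_1\circ g$ explicitly; your version is the more careful one.
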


\begin{proof}
The morphism $f$ is a retraction, then we have a section $g:N_2\rightarrow N_1$ such that $f\circ g=\mathrm{Id}_{N_2}$. Thus $N_2$ embeds in $N_1$. So $N_2$ is atomic and left cancellative.  Let $a,b\in N_2$ such that the set $\Lambda=\{c\in N_2\mid a\preceq c~\text{and}~b\preceq c\}$ is nonempty. Set $\delta=g(a)\vee g(b)$ the least common right multiple of $g(a)$ and $g(b)$ in $N_1$, and write $\delta=g(a)a'=g(b)b'$ with $a',b'\in N_1$. Thus $f(\delta)=af(a')=bf(b')$. Let $c\in\Lambda$ and $c'\in N_1$ such that $g(c)=\delta c'$. Then $c=f(\delta)f(c')$, and so $f(\delta)$ divides $c$ in $N_2$. Hence, $f(\delta)$ is the least element of $\Lambda$, and therefore $N_2$ is left preGarside.
\end{proof}~

\begin{lm}\label{lm6}
For every $d\in\N^*$, we have $$(\pi\circ\varphi)^d=\pi\circ\varphi^d.$$
\end{lm}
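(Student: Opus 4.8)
The plan is to argue by induction on $d$. The case $d=1$ is just the definition of $\pi\circ\varphi$ (note that the statement tacitly assumes $\pi$ is well-defined, so that $\pi\circ\varphi$ makes sense as an endomorphism of $M$). For the inductive step, assuming $(\pi\circ\varphi)^d=\pi\circ\varphi^d$, I would write
$(\pi\circ\varphi)^{d+1}=(\pi\circ\varphi)\circ(\pi\circ\varphi)^d=\pi\circ\varphi\circ\pi\circ\varphi^d$,
whereas $\pi\circ\varphi^{d+1}=\pi\circ\varphi\circ\varphi^d$. Hence everything reduces to the single auxiliary identity $\pi\circ\varphi\circ\pi=\pi\circ\varphi$ of endomorphisms of $M$.

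To prove this identity, since both sides are morphisms it suffices to check it on the atoms $s\in S=S_1\sqcup S_2$. If $s\in S_2$, then $\pi(s)=s$ by definition of the forgetting morphism $\pi=\pi_{S_2}$, so $\pi(\varphi(\pi(s)))=\pi(\varphi(s))$ trivially. If $s\in S_1$, then $\pi(s)=1$, hence $\pi(\varphi(\pi(s)))=\pi(\varphi(1))=1$; on the other hand $s\in S_1\subseteq M_1$, and since $\varphi(M_1)\subseteq M_1$ (noted just before Lemma \ref{lm3}) we get $\varphi(s)\in M_1=\langle S_1\rangle^+$, so $\pi(\varphi(s))=1$ because $\pi$ annihilates every atom of $S_1$, hence all of $M_1$. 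In both cases $\pi(\varphi(\pi(s)))=\pi(\varphi(s))$, which gives the identity and closes the induction.

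There is no real obstacle here: the only point worth isolating is the auxiliary identity $\pi\circ\varphi\circ\pi=\pi\circ\varphi$, which itself rests entirely on the two facts that $\varphi$ stabilises $M_1$ and that $\pi$ kills $M_1$ while fixing $M_2$ pointwise. Once it is established, the passage from $d$ to $d+1$ is pure bookkeeping with composition of morphisms.
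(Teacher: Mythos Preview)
Your proof is correct and rests on the same ingredients as the paper's: induction on $d$, the fact that $\varphi(M_1)\subseteq M_1$, and that $\pi$ kills $M_1$ while fixing atoms of $S_2$. The organization differs slightly: you isolate the single identity $\pi\circ\varphi\circ\pi=\pi\circ\varphi$ once and for all, after which the induction step is immediate composition; the paper instead carries out the case split $a\in S_1$ versus $a\in S_2$ inside each inductive step, writing $\varphi(a)=u_0a_1u_1\cdots a_ku_k$ explicitly and computing both sides. Your factorization is cleaner and makes transparent that the whole lemma is really just this one identity iterated; the paper's version is more hands-on but equivalent in content.
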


\begin{proof}
If $1\notin\varphi(S)$, then $S_1=\emptyset$ and $\pi=\mathrm{Id}_M$. Assume $1\in\varphi(S)$. We show the result by induction on $d$. The case $d=1$ being trivial, assume  $d>1$ and the result holds for smaller integers. It suffices to check the equality on the generators. By definition of $S_1$, if $a\in S_1$, then $\varphi^d(a)\in M_1=\langle S_1\rangle^+$ because $\varphi(M_1)\subseteq M_1$. Thus, $(\pi\circ\varphi)^d(a)=\pi\circ\varphi^d(a)=1$. If $a\in S_2$, write $\varphi(a)=u_0a_1u_1\cdots a_ku_k$, with $a_1,\dots,a_k\in S_2$ and $u_0,\dots,u_k\in M_1$. On the one hand, we have $$\pi\circ\varphi^d(a)=\pi\circ\varphi^{d-1}(u_0a_1u_1\cdots a_ku_k)=\pi\circ\varphi^{d-1}(a_1\cdots a_k)$$ in view of $\varphi^{d-1}(u_i)\in M_1$ for every $i$. On the other hand, and by the induction hypothesis, $$(\pi\circ\varphi)^d(a)=(\pi\circ\varphi)^{d-1}\circ\pi\circ\varphi(a)= \pi\circ\varphi^{d-1}\circ\pi(u_0a_1u_1\cdots a_ku_k)=\pi\circ\varphi^{d-1}(a_1\cdots a_k).$$ Thus $(\pi\circ\varphi)^d(a)=\pi\circ\varphi^d(a)$ for every $d\in\N^*$.
\end{proof}~

In view of the previous lemmas, we may now prove Theorem \ref{thm1} in two parts, depending on whether $1$ lies in $\varphi(S)$ or not.\\

\begin{proof}[Proof of Theorem \ref{thm1}]
Case I: $1\notin\varphi(S)$. By the equality (\ref{eq3}) in Lemma \ref{lm2}, the morphism $\varphi$ restricts to an endomorphism $\varphi_0$ of $M_0=\langle S_0\rangle^+$. We show that
\begin{equation}\label{eq4}
\mathrm{Fix}(\varphi)=\mathrm{Fix}(\varphi_0).
\end{equation}
It is immediate that $\mathrm{Fix}(\varphi_0)=\mathrm{Fix}(\varphi)\cap M_0$, so it suffices to show that $\mathrm{Fix}(\varphi)\subseteq M_0$. Let $u=a_1\cdots a_k$ belong to $\mathrm{Fix}(\varphi)$, with $a_1,\dots,a_k\in S$. Then $a_1\cdots a_k=u=\varphi^p(u)=\varphi^p(a_1)\cdots\varphi^p(a_k)$. Since $1\notin \varphi(S)$, and $\nu$ is additive and homogeneous, we have $\nu(\varphi^p(a_i))=\nu(a_i)$ and so $\varphi^p(a_i)\in S$ for all $i$. By Lemma \ref{lm2}, we have $S_0=S\cap\varphi^p(S)$, so $\varphi^p(a_i)\in S_0$ for all $i$, and then $u=\varphi^p(a_1)\cdots\varphi^p(a_k)\in M_0$. Therefore, $\mathrm{Fix}(\varphi)\subseteq M_0$, and so $\mathrm{Fix}(\varphi)=\mathrm{Fix}(\varphi_0)$.

Now $\varphi_0|_{S_0}$ is a permutation, and by Lemma \ref{lm3}, $M_0$ is a finitely generated left preGarside monoid. Then by Lemma \ref{lm1} (where in the case of a permutation, the norm of $M_0$ does not have to be additive or homogeneous), $\mathrm{Fix}(\varphi_0)$, and therefore $\mathrm{Fix}(\varphi)$, is a finitely generated left preGarside monoid.\\

Case II: $1\in\varphi(S)$. Denote $n=n_\varphi$, and recall that $\pi=\pi_{S_2}$. Consider the morphism $\varphi_2=(\pi\circ\varphi)|_{M_2}$ that is clearly in $\mathrm{End}(M_2)$. We have $1\notin\varphi_2(S_2)$. Indeed, if $\varphi_2(s)=\pi(\varphi(s))=1$ for some $s\in S_2$, then $\varphi(s)\in M_1$, which means that $\varphi^n(\varphi(s))=\varphi^{n+1}(s)=1$. But since $n=\mathrm{max}\{k\in\N^*\mid\exists s\in S~\mathrm{such~that}~\varphi^k(s)=1~\mathrm{and}~\varphi^{k-1}(s)\neq1\}$, then $\varphi^n(s)=1$, which contradicts the fact that $s\in S_2$. Thus, by Lemma \ref{lm4} and Case I, $\mathrm{Fix}(\varphi_2)$ is a finitely generated left preGarside monoid.

We claim that
\begin{equation}\label{eq5}
\mathrm{Fix}(\varphi)=\varphi^n(\mathrm{Fix}(\varphi_2)).
\end{equation}
As seen before, we have $\varphi^n(M_1)=\{1\}$ and $\varphi(M_1)\subseteq M_1$. Let $u\in\mathrm{Fix}(\varphi)$. We may factor $u=u_0a_1u_1\cdots a_ku_k$, with $a_1,\dots,a_k\in S_2$ and $u_0,\dots,u_k\in M_1$. It follows that $u=\varphi^n(u)=\varphi^n(a_1a_2\cdots a_k)$. For every $i$, we have $\pi\circ\varphi(u_i)=1$, in view of $\varphi(M_1)\subseteq M_1$. Now $a_1a_2\cdots a_k\in M_2$, and $$\varphi_2(a_1a_2\cdots a_k)=\pi\circ\varphi(a_1a_2\cdots a_k)=\pi\circ\varphi(u_0a_1u_1\cdots a_ku_k)=\pi(u)=a_1a_2\cdots a_k.$$ Hence $a_1a_2\cdots a_k\in\mathrm{Fix}(\varphi_2)$, and so $u=\varphi^n(a_1a_2\cdots a_k)\in\varphi^n(\mathrm{Fix}(\varphi_2))$. Thus $\mathrm{Fix}(\varphi)\subseteq\varphi^n(\mathrm{Fix}(\varphi_2))$.

Conversely, let $v=a_1a_2\cdots a_k\in\mathrm{Fix}(\varphi_2)$, with $a_1,\dots,a_k\in S_2$. Clearly,
\begin{equation}\label{eq6}
\varphi^n\circ\pi=\varphi^n.
\end{equation}
Hence $v=\varphi_2(v)=\pi\circ\varphi(v)$ yields $\varphi(\varphi^n(v))=\varphi^n\circ\varphi(v)=
\varphi^n\circ\pi\circ\varphi(v)=\varphi^n(\pi\circ\varphi(v))=\varphi^n(v)$ and so $\varphi^n(v)\in\mathrm{Fix}(\varphi)$. Thus $\varphi^n(\mathrm{Fix}(\varphi_2))\subseteq\mathrm{Fix}(\varphi)$ and so $\mathrm{Fix}(\varphi)
=\varphi^n(\mathrm{Fix}(\varphi_2))$.\\

In view of (\ref{eq5}), we have a morphism $f:=\varphi^n|_{\mathrm{Fix}(\varphi_2)}:\mathrm{Fix}(\varphi_2)\rightarrow\mathrm{Fix}(\varphi)$. Let $u\in\mathrm{Fix}(\varphi)$. By Lemma \ref{lm6}, we get $\varphi_2(\pi(u))=\pi\circ\varphi\circ\pi(u)= \pi\circ\varphi\circ\pi\circ\varphi(u)=\pi\circ\varphi^2(u)=\pi(u)$. Then $\pi(\mathrm{Fix}(\varphi))\subseteq\mathrm{Fix}(\varphi_2)$, and so we have another morphism $g:=\pi|_{\mathrm{Fix}(\varphi)}:\mathrm{Fix}(\varphi)\rightarrow\mathrm{Fix}(\varphi_2)$. In view of (\ref{eq6}), one has $\varphi^n\circ\pi(u)=\varphi^n(u)=u$ for every $u\in\mathrm{Fix}(\varphi)$, and then $f\circ g=\mathrm{Id}|_{\mathrm{Fix}(\varphi)}$. So the morphism $f$ is a retraction with section $g$. We established that $\mathrm{Fix}(\varphi_2)$ is a left preGarside monoid. So by Lemma \ref{lm5}, $\mathrm{Fix}(\varphi)$ is also a left preGarside monoid. The submonoid $\mathrm{Fix}(\varphi_2)$ is finitely generated, then so is $\mathrm{Fix}(\varphi)$, in view of (\ref{eq5}).
\end{proof}

We will see in the proof of Proposition \ref{prp2} that the equality (\ref{eq5}) induces an isomorphism between $\mathrm{Fix}(\varphi)$ and $\mathrm{Fix}(\varphi_2)$.

\begin{ex}\label{ex4}
Let $M=\langle a,b,c\mid abab=baba,~ac=ca\rangle^+$, and $\varphi\in\mathrm{End}(M)$ such that $\varphi(a)=b$, $\varphi(b)=a$ and $\varphi(c)=1$, with $\nu$ additive and $\nu(a)=\nu(b)=\nu(c)=1$. By using the notations above, we have $n=1$, $S_1=\{c\}$, $S_2=\{a,b\}$, $M_2=\langle a,b\mid abab=baba\rangle^+$ and $\varphi_2\in\mathrm{End}(M_2)$, such that $\varphi_2(a)=b$ and $\varphi_2(b)=a$. Then one has $\mathrm{Fix}(\varphi_2)=\langle abab\rangle^+$, and $\mathrm{Fix}(\varphi)=\varphi(\mathrm{Fix}(\varphi_2))=\mathrm{Fix}(\varphi_2 )=\langle abab\rangle^+$.
\end{ex}~

\subsection{The submonoid of periodic points}

As in the previous subsection, we consider a finitely generated left preGarside monoid $M$, equiped with an additive and homogeneous norm $\nu$, and we fix $\varphi\in\mathrm{End}(M)$. We also set $n=n_\varphi$, $S=\mathcal S(M)$, $S_0=S\cap\mathrm{Per}(\varphi)$, $S_1=S\cap(\varphi^n)^{-1}\{1\}$, $S_2=S\setminus S_1$, $p=|S|!$, and $\pi:=\pi_{S_2}$ when it is well-defined.\\

\begin{prp}\label{prp1}
If the morphism $\pi$ is well-defined, then we have $$\mathrm{Per}(\varphi)=\mathrm{Fix}(\varphi^{pn}).$$
\end{prp}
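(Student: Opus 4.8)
The inclusion $\mathrm{Fix}(\varphi^{pn})\subseteq\mathrm{Per}(\varphi)$ is immediate from the definition of periodic points. So the content is the reverse inclusion: every periodic point is fixed by the single power $\varphi^{pn}$. The plan is to take $u\in\mathrm{Per}(\varphi)$, say $\varphi^m(u)=u$ for some $m\geq1$, and show $\varphi^{pn}(u)=u$. The natural strategy is to reduce to the two cases already handled in Theorem~\ref{thm1} by splitting $u$ along the decomposition $S=S_1\sqcup S_2$, exactly as in the proof of that theorem. Concretely, I would first observe that $\mathrm{Per}(\varphi)$ is itself invariant under $\varphi$, and that the key identity $\varphi^n(M_1)=\{1\}$ together with $\varphi(M_1)\subseteq M_1$ lets one push a periodic point into the image $\varphi^n(M_2)$.

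First I would treat the case $1\notin\varphi(S)$, where $\pi=\mathrm{Id}$ and $S_1=\emptyset$. Here, by the additivity and homogeneity of $\nu$ and the fact that $1\notin\varphi(S)$, one has $\nu(\varphi(w))\geq\nu(w)$ for all $w$; applied to $u$ with $\varphi^m(u)=u$ this forces $\nu(\varphi^j(u))=\nu(u)$ for all $j$, hence every generator occurring in $u$ has its iterated images staying in $S$. Then, just as in Lemma~\ref{lm2}, each such generator lies in $S_0=S\cap\mathrm{Per}(\varphi)$, so $u\in M_0=\mathrm{Fix}(\varphi^p)\subseteq\mathrm{Fix}(\varphi^{pn})$ (and here $n=1$ anyway). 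For the general case $1\in\varphi(S)$, write $u=u_0a_1u_1\cdots a_ka_k$ with $a_i\in S_2$, $u_j\in M_1$; applying $\varphi^n$ and using $\varphi^n(M_1)=\{1\}$ gives $\varphi^n(u)=\varphi^n(a_1\cdots a_k)$, and I would show $a_1\cdots a_k\in M_2\cap\mathrm{Per}(\varphi_2)$ where $\varphi_2=(\pi\circ\varphi)|_{M_2}$, using Lemma~\ref{lm6} to translate $\varphi^m(u)=u$ into a periodicity statement for $\varphi_2$. By Case~I applied to $\varphi_2$ (which satisfies $1\notin\varphi_2(S_2)$, as shown in the proof of Theorem~\ref{thm1}), we get $\varphi_2^p$ fixes $a_1\cdots a_k$, i.e. $\mathrm{Per}(\varphi_2)=\mathrm{Fix}(\varphi_2^p)$.

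It then remains to climb back up: from $\varphi_2^p(a_1\cdots a_k)=a_1\cdots a_k$ I would deduce $\varphi^{pn}(u)=u$. Using $\varphi^n\circ\pi=\varphi^n$ (equation~(\ref{eq6})) and $(\pi\circ\varphi)^p=\pi\circ\varphi^p$ (Lemma~\ref{lm6}), one computes $\varphi^{n}(\varphi_2^{p}(a_1\cdots a_k))=\varphi^n(\pi\circ\varphi^p(a_1\cdots a_k))=\varphi^{n}\circ\varphi^{p}(a_1\cdots a_k)=\varphi^{n+p}(a_1\cdots a_k)$, and since $\varphi^n(a_1\cdots a_k)$ already lies in $\mathrm{Fix}(\varphi)$ (it equals $u$, a fixed point of $\varphi$ once we know $\varphi^{pn}u = u$ — more carefully, one should argue directly that $\varphi^p$ fixes $\varphi^n(a_1\cdots a_k)$), one concludes $\varphi^{pn}(u)=\varphi^{pn}(\varphi^n(a_1\cdots a_k)) = \varphi^n(\varphi_2^{pn}(a_1\cdots a_k)) = \varphi^n(a_1 \cdots a_k)=u$.

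The main obstacle I anticipate is the bookkeeping in the last step: one must be careful that $pn$ (rather than, say, $p+n$ or $pn$ off by one) is exactly the right exponent, and that the commutation identities between $\pi$, $\varphi$, and restriction to $M_2$ are applied on elements where they are valid. In particular, the cleanest route is probably to prove the intermediate statement $\mathrm{Per}(\varphi)=\varphi^n(\mathrm{Fix}(\varphi_2^p))$ first (mirroring equation~(\ref{eq5})), establish that $\varphi^n$ and $\pi$ are mutually inverse bijections between $\mathrm{Per}(\varphi)$ and $\mathrm{Per}(\varphi_2)=\mathrm{Fix}(\varphi_2^p)$, and then transport the identity $\varphi_2^p=\mathrm{Id}$ on $\mathrm{Per}(\varphi_2)$ through this bijection — this avoids fiddly direct computation and is the same device used to pass from Theorem~\ref{thm1} to Proposition~\ref{prp2}.
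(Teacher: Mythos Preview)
Your Case~I is correct and in fact tidier than the paper's: you invoke Lemma~\ref{lm3} directly to get $u\in M_0=\mathrm{Fix}(\varphi^p)$, whereas the paper runs an induction on $|S|$ via the restriction $\varphi_0$. Both work.

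In Case~II there is a real gap. Your displayed chain
\[
\varphi^{pn}(u)=\varphi^{pn}(\varphi^n(a_1\cdots a_k))=\varphi^n(\varphi_2^{pn}(a_1\cdots a_k))=\varphi^n(a_1\cdots a_k)=u
\]
fails at both ends: the first and last equalities use $u=\varphi^n(a_1\cdots a_k)$, but all you actually have is $\varphi^n(u)=\varphi^n(a_1\cdots a_k)$. There is no reason a periodic point of period~$r$ should satisfy $\varphi^n(u)=u$; that is part of the conclusion. Your ``cleaner route'' has the same circularity: asserting that $\varphi^n$ and $\pi$ are mutually inverse between $\mathrm{Per}(\varphi)$ and $\mathrm{Per}(\varphi_2)$ amounts to $\varphi^n\circ\pi=\mathrm{Id}$ on $\mathrm{Per}(\varphi)$, i.e.\ $\varphi^n(u)=u$, which is again unproved.

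The middle of your computation is, however, correct and is exactly what is needed: it shows $\varphi^{pn}$ fixes $\varphi^n(u)$, that is $\varphi^{pn+n}(u)=\varphi^n(u)$. Applying $\varphi^{pn-n}$ gives the \emph{idempotency} $\varphi^{2pn}(u)=\varphi^{pn}(u)$. This is the step the paper isolates. From there, since $\varphi^r(u)=u$, one has $u=\varphi^{pnr}(u)$, and the idempotency collapses $\varphi^{pnr}(u)$ down to $\varphi^{pn}(u)$, yielding $u=\varphi^{pn}(u)$. So the missing idea is precisely this descent: you cannot jump straight to $u$, you must first show $\varphi^{pn}$ is idempotent on the orbit of $u$ and then use the original period~$r$ to land back at~$u$.
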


The proof of this proposition is also in the spirit of \cite[Theorem 3.2]{RoS}, where the particular case of trace monoids was considered.

\begin{proof}
Case I: $1\notin\varphi(S)$. By definition, $n=1$ in this case. We will use induction on $|S|$. The case $|S|=0$ being trivial, assume that $|S|>0$ and the result holds for smaller sets.

We may assume $S_0\subsetneq S$, otherwise $\varphi|_S$ would be a permutation, and since the order of $\varphi|_S$ must divide the order of the symmetric group on $S$, which is $p$, we would get $(\varphi|_S)^p=\mathrm{Id}_S$ and therefore $\varphi^p=\mathrm{Id}_M$, yielding $\mathrm{Fix}(\varphi^p)=M=\mathrm{Per}(\varphi)$.

For every $r\in\mathbb{N}^*$, if we replace $\varphi$ by $\varphi^r$, then $S_0$ remains the same in view of $\mathrm{Per}(\varphi)=\mathrm{Per}(\varphi^r)$, and so does $M_0$. On the other hand, by (\ref{eq3}), we restrict $\varphi$ to $\varphi_0=\varphi|_{M_0}$, and we have $\varphi^r|_{M_0}=(\varphi|_{M_0})^r=\varphi_0^r$. Hence
\begin{equation}\label{eq7}
\mathrm{Fix}(\varphi^r)=\mathrm{Fix}(\varphi_0^r)
\end{equation}
by applying (\ref{eq4}) to $\varphi^r$. By the induction hypothesis and Lemma \ref{lm3}, we have $\mathrm{Per}(\varphi_0)=\mathrm{Fix}(\varphi_0^{|S_0|!})$. Since $|S_0|!$ divides $p$, we get $\mathrm{Per}
(\varphi_0)=\mathrm{Fix}(\varphi_0^{|S_0|!})\subseteq\mathrm{Fix}(\varphi_0^p)\subseteq\mathrm{Per}(\varphi_0)$ and so $\mathrm{Per}(\varphi_0)=\mathrm{Fix}(\varphi_0^p)$.
Together with (\ref{eq7}), this yields $$\mathrm{Per}(\varphi)=\cup_{r\geq1}\mathrm{Fix}(\varphi
^r)=\cup_{r\geq1}\mathrm{Fix}(\varphi_0^r)=\mathrm{Per}(\varphi_0)=\mathrm{Fix}(\varphi_0^p)=\mathrm{Fix}(\varphi^p)$$ as required.\\

Case II: $1\in\varphi(S)$. By definition, we have $\mathrm{Per}(\varphi)\supseteq\mathrm{Fix}(\varphi^{pn})$. Conversely, let $u\in\mathrm{Per}(\varphi)$, say $u\in\mathrm{Fix}(\varphi^r)$. We may factor $u=u_0a_1u_1\cdots a_ku_k$, with $a_1,\dots,a_k\in S_2$ and $u_0,\dots,u_k\in M_1$. It follows that $u=\varphi^{rn}(u)=\varphi^{rn}(a_1a_2\cdots a_k)$. Now $a_1a_2\cdots a_k\in M_2$, and Lemma \ref{lm6} yields $a_1a_2\cdots a_k=\pi\circ\varphi^{rn}(a_1a_2\cdots a_k)=(\pi\circ\varphi)^
{rn}(a_1a_2\cdots a_k)$. Consequently $a_1a_2\cdots a_k$ belongs to $\mathrm{Fix}(\varphi_2^{rn})\subseteq\mathrm{Per}(\varphi_2)$. As in the proof of Theorem \ref{thm1}, we have $1\notin\varphi_2(S_2)$. Thus, by Lemma \ref{lm4} and Case I, we have $\mathrm{Per}(\varphi_2)=\mathrm{Fix}(\varphi_2^{|S_2|!})$. We get $a_1a_2\cdots a_k\in\mathrm{Fix}(\varphi_2^{|S_2|!})\subseteq\mathrm{Fix}(\varphi_2^{pn})$, and so $a_1a_2\cdots a_k=\pi\circ\varphi^{pn}(a_1a_2\cdots a_k)$ in view of Lemma \ref{lm6}. Hence $\varphi^{pn}(u)=\varphi^{pn}(a_1a_2\cdots a_k)=v_0a_1v_1\cdots a_kv_k$ for some $v_0,v_1,\dots,v_k$ in $M_1$. Thus $$\varphi^{2pn}(u)=\varphi^{pn}\circ\pi\circ\varphi^{pn}(u)=
\varphi^{pn}\circ\pi(v_0a_1v_1\cdots a_kv_k)=\varphi^{pn}\circ\pi(u_0a_1u_1\cdots a_ku_k)=\varphi^{pn}(u).$$ Since $\varphi^r(u)=u$, this yields to $u=\varphi^r(u)=\varphi^{2r}(u)=\dots=\varphi^{pnr}(u)=\varphi^{pn(r-1)}(u)=\dots=\varphi^{pn}(u)$. Therefore $\mathrm{Per}(\varphi)=\mathrm{Fix}(\varphi^{pn})$.
\end{proof}~

\begin{thm}\label{thm2}
If the morphism $\pi$ is well-defined, then $\mathrm{Per}(\varphi)$ is also a finitely generated left preGarside monoid.
\end{thm}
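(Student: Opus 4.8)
The plan is to reduce Theorem~\ref{thm2} to the already-established Theorem~\ref{thm1} via Proposition~\ref{prp1}. Indeed, Proposition~\ref{prp1} gives the identity $\mathrm{Per}(\varphi)=\mathrm{Fix}(\varphi^{pn})$, so it suffices to apply Theorem~\ref{thm1} to the endomorphism $\psi:=\varphi^{pn}$ in place of $\varphi$. The only thing to verify is that $\psi$ satisfies the hypothesis of Theorem~\ref{thm1}, namely that the associated forgetting morphism is well-defined. So first I would carefully unwind what the sets $S_0,S_1,S_2$ and the map $\pi$ become when $\varphi$ is replaced by $\psi=\varphi^{pn}$.

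Write $S_0',S_1',S_2',n_\psi,\pi'$ for the data attached to $\psi$. Since $\mathrm{Per}(\psi)=\mathrm{Per}(\varphi)$, we have $S_0'=S_0$. For the kernel part, one checks that $S_1'=S\cap(\psi^{n_\psi})^{-1}\{1\}=S\cap(\varphi^n)^{-1}\{1\}=S_1$: if $s\in S$ and $\varphi^{n}(s)=1$ then certainly $\psi^{k}(s)=1$ for all $k\geq 1$; conversely, if $\psi^{k}(s)=\varphi^{kpn}(s)=1$ for some $k$, then by the definition of $n=n_\varphi$ as the maximal length of a ``collapsing chain'' of atoms under $\varphi$, already $\varphi^{n}(s)=1$. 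Hence $S_2'=S_2$ and $\pi'=\pi_{S_2}=\pi$, which is well-defined by hypothesis. Moreover $\nu$ is still additive and homogeneous on $M$, and the atom set of $M$ is unchanged. Therefore all hypotheses of Theorem~\ref{thm1} hold for $\psi$, and we conclude that $\mathrm{Fix}(\psi)=\mathrm{Fix}(\varphi^{pn})=\mathrm{Per}(\varphi)$ is a finitely generated left preGarside monoid.

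The one point that requires genuine care is the claim $S_1'=S_1$ in the case $1\in\varphi(S)$: showing that no atom $s\in S_2$ can have $\varphi^{kpn}(s)=1$ even though $\varphi^{n}(s)\neq 1$. This uses that $\varphi(M_1)\subseteq M_1$ and $\varphi^{n}(M_1)=\{1\}$ together with the maximality in the definition of $n_\varphi$: if $\varphi^{m}(s)=1$ for some $m\geq 1$ with $s\in S$, pick the least such $m$; then $\varphi^{m-1}(s)\neq 1$ and $\varphi^{m-1}(s)$ is a product of atoms, at least one of which, say $t$, satisfies $\varphi(t)\neq 1$ impossible — more precisely $\varphi^{m-1}(s)$ must be an atom $t$ with $\varphi(t)=1$ (by atomicity and $\nu$ additive/homogeneous, collapsing to $1$ under $\varphi$ forces a single atom), so $\varphi^{1}(t)=1$, $\varphi^{0}(t)\neq 1$, whence $1\leq m-1+1=m\leq n$ by definition of $n_\varphi$. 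Thus $s\in S_1$, proving $S_1'\subseteq S_1$; the reverse inclusion is immediate. I expect this verification, essentially a repetition of the argument already used inside the proof of Theorem~\ref{thm1} (Case II) to show $1\notin\varphi_2(S_2)$, to be the only real content; the rest is a direct invocation of Proposition~\ref{prp1} and Theorem~\ref{thm1}.
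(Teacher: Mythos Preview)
Your approach is correct and coincides with the paper's own proof: reduce to Theorem~\ref{thm1} applied to $\psi=\varphi^{pn}$ via Proposition~\ref{prp1}, after checking that the set $S_1$ attached to $\psi$ equals the one attached to $\varphi$, so that the forgetting morphism is the same $\pi$ and hence well-defined. One small correction to your third paragraph: the claim that $\varphi^{m-1}(s)$ must itself be a single atom is false in general (for instance $\varphi(s)=ab$ with $\varphi(a)=\varphi(b)=1$ gives $m=2$ and $\varphi^{m-1}(s)=ab$), but the detour is unnecessary---since $s\in S$ is already an atom with $\varphi^{m}(s)=1$ and $\varphi^{m-1}(s)\neq 1$, the definition of $n_\varphi$ immediately yields $m\leq n$, hence $\varphi^{n}(s)=1$ and $s\in S_1$.
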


\begin{proof}
In Proposition \ref{prp1}, we showed that $\mathrm{Per}(\varphi)=\mathrm{Fix}(\psi)$, where $\psi=\varphi^{pn}$. Denote $S_1(\varphi)=S_1$, $S_1(\psi)=S\cap(\psi^{n_\psi})^{-1}\{1\}$, $S_2(\varphi)=S_2$, $S_2(\psi)=S\setminus S_1(\psi)$, $\pi(\varphi)=\pi$ and $\pi(\psi)=\pi_{S_2(\psi)}$. We have $S_1(\varphi)=\{s\in S\mid\exists k\in\N~\mathrm{such~that}~\varphi^k(s)=1\}=\{s\in S\mid\exists k\in\N~\mathrm{such~that}~\psi^k(s)=1\}=S_1(\psi)$. Then $S_2(\psi)=S_2(\varphi)$, and so $\pi(\psi)=\pi(\varphi)=\pi$. Thus we can apply Theorem \ref{thm1} to $\psi$, which means that $\mathrm{Fix}(\psi)$, and therefore $\mathrm{Per}(\varphi)$, is a finitely generated left preGarside monoid.
\end{proof}~

\subsection{The case of Artin monoids}

A symmetry of an Artin group $A$ generated by $S$, is an endomorphism $\varphi$ of $A$ such that $\varphi_{|S}$ is a permutation. In \cite[Lemma 10]{Cri2} and \cite[Corollary 4.4]{Mich}, it is shown that, given a group $G$ of symmetries of an Artin group $A$, the submonoid of elements fixed by $G$, is isomorphic to another Artin monoid. In particular, given an Artin monoid M generated by $S$, and $\varphi\in\mathrm{End}(M)$ such that $\varphi_{|S}$ is a permutation (i.e. $\varphi\in\mathrm{Aut}(M)$), the submonoid $\mathrm{Fix}(\varphi)$ is also an Artin monoid. Below, we will show that this is also the case for $\mathrm{Per}(\varphi)$, and for every $\varphi\in\mathrm{End}(M)$.\\

Let $M=\langle S\mid[a,b\rangle^{m_{ab}}=[b,a\rangle^{m_{ab}};~m_{ab}\neq\infty\rangle^+$ be an Artin monoid, and $\varphi$ be in $\mathrm{End}(M)$. By \cite{BrS}, Artin monoids satisfy all properties of preGarside monoids. The set of atoms $\mathcal S(M)$ of $M$ is $S$, and the length $\ell_S$ is an additive and homogeneous norm over $M$. Thus, we can apply the results from the previous subsections. \\
As before, set $n=n_\varphi$, $S_0=S\cap\mathrm{Per}(\varphi)$, $S_1=S\cap(\varphi^n)^{-1}\{1\}$, $S_2=S\setminus S_1$, $p=|S|!$, and $\pi:=\pi_{S_2}$ when it is well-defined. It is known that the submonoids $M_0=\langle S_0\rangle^+$, $M_1=\langle S_1\rangle^+$ and $M_2=\langle S_2\rangle^+$ are Artin monoids too.\\

\begin{lm}\label{lm7}
The morphism $\pi$ is well-defined. 
\end{lm}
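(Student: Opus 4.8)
The plan is to return to the definition of well-definedness: $\pi=\pi_{S_2}$ is well-defined exactly when the forgetting word morphism $\pi_{S_2}^*\colon S^*\to S_2^*$ carries each defining relator $[a,b\rangle^{m_{ab}}=[b,a\rangle^{m_{ab}}$ of the Artin presentation of $M$ to a pair of words representing the same element of $M$. If $1\notin\varphi(S)$ there is nothing to prove, since then $S_1=\emptyset$ and $\pi=\mathrm{Id}_M$; so I assume $1\in\varphi(S)$, $n=n_\varphi\ge 1$, and use that $S_1=\{s\in S\mid\varphi^n(s)=1\}$. Fix a defining relation with $m:=m_{ab}<\infty$ (when $m_{ab}=\infty$ there is no relator) and $a\ne b$, and split into cases according to how many of $a,b$ lie in $S_1$.

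If $a,b\in S_2$, then $\pi_{S_2}^*$ fixes both sides, and the relation is one of $M$ itself, so there is nothing to check. If $a,b\in S_1$, then $\pi_{S_2}^*$ sends both sides to the empty word $\varepsilon$. The only delicate situation is when exactly one of $a,b$ lies in $S_1$, say $a\in S_1$ and $b\in S_2$ (the other subcase is symmetric). Since $[x,y\rangle^m$ contains $\lceil m/2\rceil$ letters $x$ and $\lfloor m/2\rfloor$ letters $y$, applying $\pi_{S_2}^*$ turns the relation into $b^{\lfloor m/2\rfloor}=b^{\lceil m/2\rceil}$. If $m$ is even both sides agree; if $m$ is odd they are $b^{(m-1)/2}$ and $b^{(m+1)/2}$, which are distinct in the atomic monoid $M$. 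So the whole statement reduces to ruling out this mixed subcase with $m$ odd, i.e.\ to showing that whenever $m_{ab}$ is odd one has $a\in S_1\iff b\in S_1$.

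To see this, I claim that for odd $m=m_{ab}$ and every $k\ge 0$ one has $\ell_S(\varphi^k(a))=\ell_S(\varphi^k(b))$. Applying the endomorphism $\varphi^k$ to the relation $[a,b\rangle^{m}=[b,a\rangle^{m}$ gives the identity $[\varphi^k(a),\varphi^k(b)\rangle^{m}=[\varphi^k(b),\varphi^k(a)\rangle^{m}$ in $M$; taking the additive homogeneous norm $\ell_S$ of both sides and counting occurrences as above yields
\[\lceil m/2\rceil\,\ell_S(\varphi^k(a))+\lfloor m/2\rfloor\,\ell_S(\varphi^k(b))=\lfloor m/2\rfloor\,\ell_S(\varphi^k(a))+\lceil m/2\rceil\,\ell_S(\varphi^k(b)),\]
hence $(\lceil m/2\rceil-\lfloor m/2\rfloor)\bigl(\ell_S(\varphi^k(a))-\ell_S(\varphi^k(b))\bigr)=0$, and for odd $m$ the first factor equals $1$. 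Taking $k=n$ gives $\ell_S(\varphi^n(a))=\ell_S(\varphi^n(b))$; since $\ell_S$ is a norm, it vanishes only at $1$, so $\varphi^n(a)=1\iff\varphi^n(b)=1$, that is $a\in S_1\iff b\in S_1$. Thus the mixed subcase with $m$ odd is vacuous, $\pi_{S_2}^*$ respects every defining relator, and $\pi$ is well-defined.

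The only place where anything beyond bookkeeping happens is the parity obstruction just resolved: one must notice that an odd edge joining a ``collapsed'' atom $a$ (one with $\varphi^n(a)=1$) to a surviving atom $b$ would destroy well-definedness, and that the additivity of $\ell_S$ together with the fact that $\varphi$ sends the Artin relator of $\{a,b\}$ to the corresponding relator forces the two atoms to be collapsed or to survive simultaneously. Everything else is a routine case check, so this is the step I expect to be the crux.
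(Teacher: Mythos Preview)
Your proof is correct and follows essentially the same case analysis as the paper: trivial when $m_{ab}$ is even or when $a,b$ lie on the same side of the partition $S=S_1\sqcup S_2$, and in the mixed odd case one shows $b\in S_1$ is forced. The only cosmetic difference is that the paper applies $\varphi^n$ to the Artin relation and uses cancellativity (from $(\varphi^n(b))^k=(\varphi^n(b))^{k+1}$ conclude $\varphi^n(b)=1$), whereas you take $\ell_S$ of both sides to reach the same conclusion.
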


\begin{proof}
If $1\notin\varphi(S)$, then $S_2=S$ and $\pi=\mathrm{Id}_M$. Suppose $1\in\varphi(S)$. It suffices to verify that $\pi([a,b\rangle^{m_{ab}})=\pi([b,a\rangle^{m_{ab}})$ for all $m_{ab}\neq\infty$. Let $a,b\in S$ such that $m_{ab}\neq\infty$. If $m_{ab}$ is even, or if $a$ and $b$ are both in $S_1$ or in $S_2$, the equality holds trivially. Suppose we have $m_{ab}=2k+1$ for some $a\in S_1$, $b\in S$, and $k>0$. Then $\varphi^n([a,b\rangle^{m_{ab}})=(\varphi^n(b))^k$ and $\varphi^n([b,a\rangle^{m_{ab}})=(\varphi^n(b))^{k+1}$. Thus, by cancellativity, $\varphi^n(b)=1$, so $b\in S_1$ and we are done as remarked above.
\end{proof}~

\begin{prp}\label{prp2}
Let $M$ be an Artin monoid, and $\varphi$ be in $\mathrm{End}(M)$. Then the submonoids $\mathrm{Fix}(\varphi)$ and $\mathrm{Per}(\varphi)$ are also Artin monoids.
\end{prp}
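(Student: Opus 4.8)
The plan is to reduce the statement, via the structural results already obtained, to the two facts recalled at the start of this subsection: that the fixed‑point submonoid of a \emph{symmetry} of an Artin monoid is again an Artin monoid (\cite[Lemma~10]{Cri2}, \cite[Corollary~4.4]{Mich}), and that $M_0,M_1,M_2$ are Artin monoids. By Lemma~\ref{lm7} the morphism $\pi$ is well‑defined, so Theorems~\ref{thm1} and~\ref{thm2} apply and it suffices to treat $\mathrm{Fix}(\varphi)$, since $\mathrm{Per}(\varphi)=\mathrm{Fix}(\varphi^{pn})$ by Proposition~\ref{prp1} and the data $S_1,S_2,\pi$ are unchanged when $\varphi$ is replaced by $\varphi^{pn}$ (as noted in the proof of Theorem~\ref{thm2}), so the case $\mathrm{Per}(\varphi)$ is literally the case $\mathrm{Fix}(\psi)$ for $\psi=\varphi^{pn}$.

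First I would dispose of Case~I, $1\notin\varphi(S)$. Here $\varphi$ restricts to $\varphi_0\in\mathrm{End}(M_0)$ with $\varphi_0|_{S_0}$ a permutation, so $\varphi_0$ is a symmetry of the Artin monoid $M_0$; by \cite[Lemma~10]{Cri2} (or \cite[Corollary~4.4]{Mich}) $\mathrm{Fix}(\varphi_0)$ is an Artin monoid, and by the equality $\mathrm{Fix}(\varphi)=\mathrm{Fix}(\varphi_0)$ established in~(\ref{eq4}) we are done. For Case~II, $1\in\varphi(S)$, I would invoke~(\ref{eq5}): $\mathrm{Fix}(\varphi)=\varphi^n(\mathrm{Fix}(\varphi_2))$, where $\varphi_2=(\pi\circ\varphi)|_{M_2}\in\mathrm{End}(M_2)$ satisfies $1\notin\varphi_2(S_2)$. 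Since $M_2$ is an Artin monoid, Case~I applied to $(M_2,\varphi_2)$ shows $\mathrm{Fix}(\varphi_2)$ is an Artin monoid. It remains to see that the surjection $f=\varphi^n|_{\mathrm{Fix}(\varphi_2)}\colon\mathrm{Fix}(\varphi_2)\to\mathrm{Fix}(\varphi)$ is actually an isomorphism: in the proof of Theorem~\ref{thm1} it was shown to be a retraction with section $g=\pi|_{\mathrm{Fix}(\varphi)}$, so I only need $g\circ f=\mathrm{Id}$ on $\mathrm{Fix}(\varphi_2)$. For $v=a_1\cdots a_k\in\mathrm{Fix}(\varphi_2)$ with $a_i\in S_2$, one computes $g(f(v))=\pi(\varphi^n(v))=\pi\circ\varphi^n(a_1\cdots a_k)=(\pi\circ\varphi)^n(a_1\cdots a_k)=\varphi_2^{\,n}(v)=v$, using Lemma~\ref{lm6} and that $v$ is a fixed point of $\varphi_2$. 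Hence $f$ is an isomorphism and $\mathrm{Fix}(\varphi)\cong\mathrm{Fix}(\varphi_2)$ is an Artin monoid.

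The main obstacle is genuinely Case~I, i.e.\ the passage from "$\varphi_0|_{S_0}$ is a permutation" to "$\mathrm{Fix}(\varphi_0)$ is an Artin monoid"; but this is exactly the content of the cited results on fixed subgroups/submonoids of groups of symmetries of Artin monoids, so in the write‑up this is an appeal to \cite[Lemma~10]{Cri2} and \cite[Corollary~4.4]{Mich} rather than something to be reproved. The only other point requiring care is checking that replacing $\varphi$ by $\psi=\varphi^{pn}$ does not disturb the sets $S_1,S_2$ and hence $\pi$ — this is precisely the identity $S_1(\varphi)=\{s\in S\mid\exists k,\ \varphi^k(s)=1\}=S_1(\psi)$ verified in the proof of Theorem~\ref{thm2} — so that Case~I/Case~II as above literally cover $\mathrm{Per}(\varphi)$ as well, with no separate argument needed.
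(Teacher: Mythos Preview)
Your proposal is correct and follows essentially the same approach as the paper: reduce to Case~I via~(\ref{eq4}) and invoke \cite[Lemma~10]{Cri2}; in Case~II show that $f=\varphi^n|_{\mathrm{Fix}(\varphi_2)}$ is an isomorphism onto $\mathrm{Fix}(\varphi)$ using Lemma~\ref{lm6} (you verify $g\circ f=\mathrm{Id}$, the paper checks injectivity of $f$ directly --- the computation is the same); and reduce $\mathrm{Per}(\varphi)$ to $\mathrm{Fix}(\varphi^{pn})$ via Proposition~\ref{prp1} and the invariance of $S_1,S_2,\pi$ under this replacement. The only organizational difference is that you handle the reduction for $\mathrm{Per}(\varphi)$ once at the outset, whereas the paper treats it separately inside each case.
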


\begin{proof}
Assume first $1\notin\varphi(S)$. In the proof of Theorem \ref{thm1}, we showed that $\mathrm{Fix}(\varphi)=\mathrm{Fix}(\varphi_0)$, with $\varphi_0\in\mathrm{End}(M_0)$ and $\varphi|_{S_0}$ is a permutation. Then by \cite[Lemma 10]{Cri2}, $\mathrm{Fix}(\varphi_0)$, and therefore $\mathrm{Fix}(\varphi)$, is an Artin monoid.\\
In this case, $n=1$, and by Proposition \ref{prp1}, we have $\mathrm{Per}(\varphi)=\mathrm{Fix}(\varphi^{p})$. Since $1\notin\varphi(S)$, then $\ell_S(\varphi(u))\geq\ell_S(u)$ for all $u\in M$, and so $1\notin\varphi^p(S)$. Thus, $\mathrm{Fix}(\varphi^{p})$, and therefore $\mathrm{Per}(\varphi)$, is again an Artin monoid.

Assume now $1\in\varphi(S)$. In the proof of Theorem \ref{thm1}, we showed that $\mathrm{Fix}(\varphi)=\varphi^n(\mathrm{Fix}(\varphi_2))$, with $\varphi_2\in\mathrm{End}(M_2)$ and $1\notin\varphi_2(S_2)$. Let $u,v\in\mathrm{Fix}(\varphi_2)$ such that $\varphi^n(u)=\varphi^n(v)$. Then $\pi\circ\varphi^n(u)=\pi\circ\varphi^n(v)$, and so, by Lemma \ref{lm6}, $(\pi\circ\varphi)^n(u)=(\pi\circ\varphi)^n(v)$. Thus $u=\varphi_2^n(u)=\varphi_2^n(v)=v$. Hence, the morphism $\varphi^n|_{\mathrm{Fix}(\varphi_2)}:\mathrm{Fix}(\varphi_2)\rightarrow\mathrm{Fix}(\varphi)$ is not only surjective, but also injective. Therefore, $\mathrm{Fix}(\varphi)$ is isomorphic to $\mathrm{Fix}(\varphi_2)$. By Case I, $\mathrm{Fix}(\varphi_2)$ is an Artin monoid, then so is $\mathrm{Fix}(\varphi)$.\\
By Proposition \ref{prp1}, we have $\mathrm{Per}(\varphi)=\mathrm{Fix}(\varphi^{pn})$. Since $1\in\varphi(S)$, one has $1\in\varphi^{pn}(S)$. Thus $\mathrm{Fix}(\varphi^{pn})$, and so $\mathrm{Per}(\varphi)$, is an Artin monoid.
\end{proof}~

\section{Inequalities and some equivalences between metrics}

The purpose of this section is to define three metrics $d_1$, $d_2$ and $d_3$ in finitely generated preGarside monoids, to compare them in general, and in the particular cases of trace monoids and Garside monoids.

\subsection{Metrics and normal forms}

In order to define our three metrics, we start by introducing the following general framework. Recall that given a non empty set $X$, we denote by $X^*$ the set of all finite words over $X$. Henceforth, these words will be denoted as tuples, to avoid any confusion with the monoids elements. Let $M$ be a monoid, $X$ be a non empty set, and $\iota:M\hookrightarrow X^*$ be an injective map. For $u,v\in M$ with $\iota(u)=(u_1,\dots,u_n)$ and $\iota(v)=(v_1,\dots,v_m)$, we define 
\[ r(u,v) = \left \{
\begin{array}{ll}
     \mathrm{max}\{k\geq0\mid u_1=v_1,\dots,u_k=v_k\} & \text{if $u\neq v$}\\
     \infty & \text{if $u=v$}\\
\end{array}.
\right.\]
The metric $d$ over $M$, associated to $\iota$, is defined, for all $u,v\in M$, by $$d(u,v)=2^{-r(u,v)}.$$ When $\iota(u)=(u_1,\dots,u_n)$ for some $u\in M$, then for all $k\leq n$, we denote $\iota^{[k]}(u)=(u_1,\dots,u_k)\in X^*$.\\

Let $M$ be a finitely generated preGarside monoid. For each metric $d_i$ over $M$, we will define $X_i$, $\iota_i$ and $r_i$ as above. The set $X_1$ for the first distance $d_1$ is defined in \cite{BDM}, where it is denoted by $P$; the subset of $M$ with a {\em preGarside structure}. It contains the finite set of atoms $S=\mathcal S(M)$, and whenever it contains an element, it also contains all its left and right divisors (\cite[Proposition 2.4]{BDM}). We will denote it by $M_{\mathrm{red}}$, since in the case of an Artin monoid, it is just the set of reduced elements, that we will recall bellow. The properties of $M_{\mathrm{red}}$ shown in \cite{Mich} for Artin monoids, hold in finitely generated preGarside monoids with the same proofs, as stated in \cite{BDM}. Namely (\cite[Proposition 2.12]{BDM}), there is a unique function $\alpha:M\rightarrow M_{\mathrm{red}}$ which induces the identity on $M_{\mathrm{red}}$, and satisfies
\begin{equation}\label{eq8}
\alpha(uv)=\alpha(u\alpha(v)),
\end{equation}
for all $u,v\in M$. Further, $\alpha(u)$ is the unique maximal element (for $\preceq$) in the set $\{v\in M_{\mathrm{red}}\mid v\preceq u\}$.\\
Let $M=\langle S\mid[a,b\rangle^{m_{ab}}=[b,a\rangle^{m_{ab}}; m_{ab}\neq\infty\rangle^+$ be an Artin monoid, whose natural length function is denoted, as in the preliminaries, by $\lS$. And let $W=\langle S\mid (ab)^{m_{ab}}=1; m_{ab}\neq\infty\rangle$ be the corresponding Coxeter group. There is also a length function on $W$ (see \cite{Mich}), which we denote also by $\lS$. It is known that two minimal expressions of an element of $W$ are equivalent by using Artin relations only. The length of an element is defined by the length of any of its minimal expressions as products of elements of $S$. This implies that the induced quotient map from M to W has a canonical section (as a map of sets), whose image $M_{\mathrm{red}}$ consists of those elements of $M$ which have the same length as their image in $W$.

Let $M$ be a finitely generated preGarside monoid. To every element of $M$, can be associated a (left) {\em normal form (n.f)}, that is called the (left) {\em greedy normal form}, and defined as follows. To $1_M$, we associate the empty sequence. And for $u\in M\setminus \{1\}$ and $u_1,\dots,u_n\in M_{\mathrm{red}}$, we say that $u=u_1\cdots u_n$ is in normal form (n.f), if and only if no $u_i$ is equal to 1 and for any $i$ we have $u_i=\alpha(u_i\cdots u_n)$. In view of (\ref{eq8}), the normality of a form can be seen locally (\cite[Proposition 2.21]{BDM}): $u_1\cdots u_k$ is a normal form if and only if $u_iu_{i+1}$ is for all $i$. This implies that any segment $u_i\cdots u_j$ of a normal form is normal. For $u=u_1\cdots u_n$ (n.f), we define $\iota_1(u)=(u_1,\dots,u_n)$, and denote $n=|u|_1$. Let $u,v\in M$ with $\iota_1(u)=(u_1,\dots,u_n)$ and $\iota_1(v)=(v_1,\dots,v_m)$. We define $r_1(u,v)$ exactly as $r(u,v)$ above. Using the convention $2^{-\infty}=0$, the metric $d_1$ is defined by $$d_1(u,v)=2^{-r_1(u,v)}.$$

Another important normal form $\iota_2$ over $M$, that we call the {\em Foata normal form}, is defined as follows. Let $X_2=\{u\in M\mid\exists T\subseteq S, u=\Delta(T)\}$, where $S=\mathcal S(M)$ and $\Delta(T)$ is the least right common multiple of the elements of $T$, which exists if and only if there is a right common multiple (\cite[Lemma 2.1]{GoP}). For $u\in M\setminus \{1\}$, there exists a unique $\iota_2(u)=(u_1,\dots,u_n)\in X_2^*$ such that $u=u_1\cdots u_n$ and $u_i=\Delta(\{s\in S\mid s\preceq u_i\cdots u_n\})$. When $\iota_2(u)=(u_1,\dots,u_n)$, we denote $n=|u|_2$. And similarly, the metric $d_2$ associated to $\iota_2$, is known as the {\em FNF metric}, and defined in \cite{BMP}, for all $u,v\in M$, by $$d_2(u,v)=2^{-r_2(u,v)}.$$

When the monoid $M$ is equiped with an additive and homogeneous norm $\nu$, we can assume that $\nu(s)=1$ for all $s$ in $S$, call this norm the length over $S$, and denote it by $\ell_S$. In this case, and in addition to $d_1$ and $d_2$, there is a third and useful metric, decribed in \cite{RoS} for the particular case of trace monoids, that we will denote by $d_3$. Given $u,v\in M$, we say that $v$ is a {\em prefix} of $u$, when $v$ left-divides $u$. For every $n\in\N$, denote by $\mathrm{Pref}_n(u)$ the set of all prefixes of $u$ of length $n$. Let $X_3=\mathcal P(M)$ be the set of all parts of $M$. For $u\in M\setminus\{1\}$, set $\iota_3(u)=(u_1,\dots,u_n)$ with $n=\ell_S(u)$ and $u_i=\mathrm{Pref}_i(u)$ for all $i$. And set $\iota_3(1)=(\{1\})$. Then the metric $d_3$, known as the {\em prefix metric}, is defined in \cite{Kwi} as above, for all $u,v\in M$, by $$d_3(u,v)=2^{-r_3(u,v)}.$$ Note that in this case, for $u,v\in M$, we have $r_3(u,v)=\max\{n\in\N\mid\mathrm{Pref}_n(u)= \mathrm{Pref}_n(v)\}$, because for $1\leq n\leq\ell_S(u)$, $$\mathrm{Pref}_n(u)=\mathrm{Pref}_n(v)~\Leftrightarrow~ \mathrm{Pref}_k(u)=\mathrm{Pref}_k(v),~ \forall1\leq k\leq n.$$

\subsection{Relations between $d_1$, $d_2$ and $d_3$}

In this subsection, we will compare the first distance $d_1$ with the other two for a finitely generated preGarside monoid $M$, equiped with a length $\ell_S$. We start with $d_1$ and $d_3$.

\begin{lm}\label{lm8}
Let $u$ lie in $M$. Set $\iota_1(u)=(u_1,\dots,u_n)$. Then, for $1\leq k\leq n$, we have $$\mathrm{Pref}_k(u)=\mathrm{Pref}_k(u_1\cdots u_k).$$
\end{lm}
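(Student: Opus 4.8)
The plan is to prove both inclusions $\mathrm{Pref}_k(u) \subseteq \mathrm{Pref}_k(u_1\cdots u_k)$ and $\mathrm{Pref}_k(u_1\cdots u_k) \subseteq \mathrm{Pref}_k(u)$. The second inclusion is immediate: since $u_1\cdots u_k$ left-divides $u = u_1\cdots u_n$, any prefix of $u_1\cdots u_k$ is a prefix of $u$, and this respects length. So the content is in the first inclusion, and the key fact I would exploit is that $u_k = \alpha(u_k\cdots u_n)$ is, by the characterization recalled after~(\ref{eq8}), the \emph{unique maximal} element of $M_{\mathrm{red}}$ left-dividing $u_k\cdots u_n$; in particular every element of $M_{\mathrm{red}}$ that left-divides $u_k\cdots u_n$ already left-divides $u_k$.

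The main step: let $w\in\mathrm{Pref}_k(u)$, so $w \preceq u_1\cdots u_n$ and $\ell_S(w) = k$. I want to show $w \preceq u_1\cdots u_k$. I would argue by induction on $k$ (equivalently, peel off prefixes one atom at a time). Write $w = s w'$ with $s\in S$ an atom and $\ell_S(w') = k-1$. Since $s \preceq u_1\cdots u_n$ and $u_1 = \alpha(u_1\cdots u_n)$ is the maximal reduced left-divisor, and $s\in M_{\mathrm{red}}$ (atoms lie in $M_{\mathrm{red}}$), we get $s \preceq u_1$. Now left-cancel: $u_1 = s t$ for some $t\in M$, and $w = s w' \preceq s t\, u_2\cdots u_n$ gives $w' \preceq t\, u_2\cdots u_n$ by left cancellativity. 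The delicate point is whether $t\,u_2\cdots u_n$ is again (essentially) a normal form so that the induction applies with the first factor $\alpha(t\,u_2\cdots u_n)$ still absorbing all reduced left-divisors — here I would invoke the local characterization of normality (\cite[Proposition 2.21]{BDM}) together with~(\ref{eq8}): $t u_2$ has $\alpha(t u_2\cdots u_n) = \alpha(t u_2)$, and one checks $s\cdot\alpha(tu_2\cdots u_n) \preceq u_1 u_2$ is reduced, etc. Alternatively, and more cleanly, I would avoid re-normalizing: prove directly by induction on $k$ that for $w\in M$ with $\ell_S(w)=k$ and $w \preceq u_1\cdots u_n$, one has $w \preceq u_1\cdots u_k$, using at each stage only that the first atom of (the residual of) $w$ divides the relevant reduced factor $u_i$, which follows from the maximality of $u_i = \alpha(u_i\cdots u_n)$ in $M_{\mathrm{red}}$ and the fact that any segment $u_i\cdots u_j$ of a normal form is itself normal (stated in the excerpt).

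The main obstacle I anticipate is exactly the bookkeeping in the inductive step: after cancelling the first atom $s$ from $u_1$, the residual word $t\,u_2\cdots u_n$ need not be in normal form, so I cannot blindly re-apply the hypothesis about $\alpha$ of the leading factor. The right way around this is to phrase the induction so that I never need the residual to be normal — I only ever need: \emph{if an atom $s$ left-divides $u_i\cdots u_n$ then $s$ left-divides $u_i$} (immediate from $u_i = \alpha(u_i\cdots u_n)$ being the maximal reduced left-divisor, since $s\in M_{\mathrm{red}}$). Feeding atoms of $w$ one at a time and tracking that after removing the first $i$ atoms of $w$ the remainder left-divides $t_i\,u_{i+1}\cdots u_n$ with $t_i$ the complement of the consumed part inside $u_1\cdots u_i$, together with left cancellativity, closes the argument. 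Concluding, both inclusions hold, whence $\mathrm{Pref}_k(u) = \mathrm{Pref}_k(u_1\cdots u_k)$ and then the length-$k$ prefixes coincide, as the lemma asserts.
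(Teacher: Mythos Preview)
Your overall strategy matches the paper's --- induction on $k$, using that $\alpha(u_i\cdots u_n)=u_i$ is the maximal reduced left-divisor --- but you factor $w=sw'$ with the atom on the \emph{left}, whereas the paper factors $v=v's$ with the atom on the \emph{right}. That choice is the whole point. With the atom last, the induction hypothesis applies to $v'$ against the \emph{same} element $u$ with the \emph{same} normal form, giving $v'\preceq u_1\cdots u_{k-1}$ directly; writing $u_1\cdots u_{k-1}=v'v''$, cancellation yields $s\preceq v''u_k\cdots u_n$, and a single application of~(\ref{eq8}) gives $\alpha(v''u_k\cdots u_n)=\alpha(v''\alpha(u_k\cdots u_n))=\alpha(v''u_k)$, so $s\preceq v''u_k$ and hence $v=v's\preceq u_1\cdots u_k$. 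No re-normalization issue ever arises.

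Your left-first version runs straight into the obstacle you correctly identify, and your proposed workaround is not quite right as stated. The fact you write down --- ``if an atom $s$ left-divides $u_i\cdots u_n$ then $s$ left-divides $u_i$'' --- is true but is not what the inductive step needs: after $i$ steps your residual is $t_i\,u_{i+1}\cdots u_n$, not $u_{i+1}\cdots u_n$, so what you actually need is ``if $s\preceq t_i\,u_{i+1}\cdots u_n$ then $s\preceq t_i\,u_{i+1}$''. This \emph{does} hold, again via~(\ref{eq8}) ($\alpha(t_iu_{i+1}\cdots u_n)=\alpha(t_i\alpha(u_{i+1}\cdots u_n))=\alpha(t_iu_{i+1})$), so your approach can be completed --- but you must invoke~(\ref{eq8}) at every step, not just maximality of $u_i$. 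The paper's right-first factorisation buys you exactly this: one use of~(\ref{eq8}) instead of $k$, and no need to carry a running residual through the induction.
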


\begin{proof}
It suffices to show that if $v\preceq u$ with $\ell_S(v)=k$, then $v\preceq u_1\cdots u_k$, which we do by induction on $k$. Assume first $k=1$. If $\lS(v)=1$, then $v\in S\subseteq M_{\mathrm{red}}$. So, by definition of the greedy normal form, $v\preceq\alpha(u)=u_1$. Assume now $k>1$ plus the induction hypothesis. Consider $v$ in $M$ such that $v\preceq u$ and $\lS(v)=k$. Write $v=v's$ and $u=vw$ with $s$ in $S$ and $w$ in $M$. By the induction hypothesis, $v'$ left divides $u_1\cdots u_{k-1}$. Write $u_1\cdots u_{k-1}=v'v''$ with $v''\in M$. Since $v'sw=u_1\cdots u_n=v'v''u_k\cdots u_n$, we have $s\preceq v''u_k\cdots u_n$. But $s\in M_{\mathrm{red}}$, therefore, $s\preceq\alpha(v''u_k\cdots u_n)$. By (\ref{eq8}), $\alpha(v''u_k\cdots u_n)=\alpha(v''\alpha(u_k\cdots u_n))=\alpha(v''u_k)$, so $s\preceq v''u_k$. Hence, $v=v's\preceq v'v''u_k=u_1\cdots u_k$.
\end{proof}~

\begin{prp}\label{prp3}
Let $M$ be a finitely generated preGarside monoid, equiped with a length $\ell_S$. Then we have $$d_3\leq d_1.$$
\end{prp}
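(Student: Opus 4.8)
The plan is to compare the metrics pointwise by comparing $r_1(u,v)$ with $r_3(u,v)$ for arbitrary $u,v\in M$. Since $d_i(u,v)=2^{-r_i(u,v)}$ and $x\mapsto 2^{-x}$ is decreasing, it suffices to prove the inequality $r_3(u,v)\ge r_1(u,v)$ for all $u,v\in M$. If $u=v$ both sides are $\infty$ and there is nothing to do, so assume $u\ne v$ and set $k=r_1(u,v)$; I want to show $\mathrm{Pref}_j(u)=\mathrm{Pref}_j(v)$ for all $j$ up to a suitable bound, which will force $r_3(u,v)\ge$ that bound.

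First I would unpack what $r_1(u,v)=k$ says: writing $\iota_1(u)=(u_1,\dots,u_n)$ and $\iota_1(v)=(v_1,\dots,v_m)$ (greedy normal forms), we have $u_i=v_i$ for $1\le i\le k$. The key tool is Lemma \ref{lm8}: for each index $j\le\min(n,m)$ we have $\mathrm{Pref}_j(u)=\mathrm{Pref}_j(u_1\cdots u_j)$ and $\mathrm{Pref}_j(v)=\mathrm{Pref}_j(v_1\cdots v_j)$. When $j\le k$ the agreement of the first $k$ normal-form factors gives $u_1\cdots u_j=v_1\cdots v_j$, hence $\mathrm{Pref}_j(u)=\mathrm{Pref}_j(v)$ for every such $j$. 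The subtlety is that Lemma \ref{lm8} is stated for indices not exceeding the length of the normal form, and I must ensure that the range of $j$ over which I get agreement is at least $r_1(u,v)$ in terms of the \emph{length-based} index used by $d_3$. Because each normal-form factor $u_i\in M_{\mathrm{red}}$ has $\ell_S(u_i)\ge1$, the prefix $u_1\cdots u_k$ has length at least $k$; by Lemma \ref{lm8} applied at index $k$ (valid since $k\le n$, as $r_1(u,v)\le |u|_1$ whenever $u\ne v$, and similarly $k\le m$), we get $\mathrm{Pref}_k(u)=\mathrm{Pref}_k(u_1\cdots u_k)=\mathrm{Pref}_k(v_1\cdots v_k)=\mathrm{Pref}_k(v)$, and as noted in the excerpt $\mathrm{Pref}_k$ agreement is equivalent to $\mathrm{Pref}_j$ agreement for all $j\le k$. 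Hence $r_3(u,v)=\max\{j\mid\mathrm{Pref}_j(u)=\mathrm{Pref}_j(v)\}\ge k=r_1(u,v)$.

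The main obstacle I anticipate is the boundary bookkeeping when one of the two normal forms is a prefix (as a sequence) of the other — i.e.\ when $n\ne m$ but $u_i=v_i$ for all $i\le\min(n,m)$; here one must be careful that $r_1(u,v)=\min(n,m)$ and that Lemma \ref{lm8} is still applicable at index $\min(n,m)$ for the shorter one while the longer one has a genuine prefix of that length. A clean way to sidestep any edge-case worry is to argue directly: show that for every $j\le r_1(u,v)$ the truncations satisfy $\iota_1^{[j]}(u)=\iota_1^{[j]}(v)$, conclude $u_1\cdots u_j=v_1\cdots v_j$, and invoke Lemma \ref{lm8} to transfer this to $\mathrm{Pref}_j$. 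Once $r_3\ge r_1$ is established uniformly, $d_3=2^{-r_3}\le 2^{-r_1}=d_1$ follows immediately, with the convention $2^{-\infty}=0$ covering the diagonal. No further results beyond Lemma \ref{lm8} and equation \eqref{eq8} (already used inside Lemma \ref{lm8}) are needed.
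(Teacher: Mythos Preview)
Your proposal is correct and follows essentially the same approach as the paper: reduce $d_3\le d_1$ to $r_3\ge r_1$, then use Lemma~\ref{lm8} to conclude that agreement of the first $k=r_1(u,v)$ greedy-normal-form factors forces $\mathrm{Pref}_k(u)=\mathrm{Pref}_k(u_1\cdots u_k)=\mathrm{Pref}_k(v_1\cdots v_k)=\mathrm{Pref}_k(v)$. Your extra discussion of the boundary case $k=\min(n,m)$ is sound but unnecessary, since $r_1(u,v)\le\min(|u|_1,|v|_1)$ automatically when $u\ne v$, so Lemma~\ref{lm8} always applies at index $k$.
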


\begin{proof}
Consider $u$ and $v$ distinct in $M$. Set $\iota_1(u)=(u_1,\dots,u_n)$ and $\iota_1(v)=(v_1,\dots,v_m)$. If $r_1(u,v)=0$, then $r_3(u,v)\geq r_1(u,v)$, so $d_3(u,v)\leq d_1(u,v)$. Otherwise, for $1\leq k\leq r_1(u,v)$, by Lemma \ref{lm8} we have $$\mathrm{Pref}_k(u)=\mathrm{Pref}_k(u_1\cdots u_k)=\mathrm{Pref}_k(v_1\cdots v_k)=\mathrm{Pref}_k(v).$$ Thus, $r_3(u,v)\geq r_1(u,v)$ and so $d_3(u,v)\leq d_1(u,v)$.
\end{proof}~

We turn now to $d_1$ and $d_2$. Note that the existence of a length $\ell_S$ is only necessary for $d_3$, and we do not need it to compare $d_1$ and $d_2$. The inclusion $X_2\subseteq M_{\mathrm{red}}$, deduced from \cite[Proposition 2.19]{BDM}, will be useful for us.

\begin{lm}\label{lm9}
Let $u$ be in $M$. Set $\iota_1(u)=(u_1,\dots,u_n)$ and $\iota_2(u)=(u'_1,\dots,u'_m)$. Then 
\begin{enumerate}
\item $n\leq m$, and for all $i\leq n$, we have $u'_1\cdots u'_i\preceq u_1\cdots u_i$.
\item For $i\leq n$ and $j\leq m$, if $u'_1\cdots u'_j\preceq u_1\cdots u_i$, then $\iota^{[j]}_2(u_1\cdots u_i)=(u'_1,\dots,u'_j)$.
\end{enumerate}
\end{lm}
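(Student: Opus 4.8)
\textbf{Proof plan for Lemma \ref{lm9}.}

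The plan is to compare the greedy normal form $\iota_1$ (over $M_{\mathrm{red}}$) with the Foata normal form $\iota_2$ (over $X_2$). The key structural fact, which I would establish first, is that each Foata factor $u'_j$ is the $\preceq$-join $\Delta(\{s\in S\mid s\preceq u'_j\cdots u'_m\})$ of all atoms dividing the tail $u'_j\cdots u'_m$, whereas each greedy factor $u_i=\alpha(u_i\cdots u_n)$ is the \emph{maximal} reduced left divisor of its tail. Since $X_2\subseteq M_{\mathrm{red}}$, the product $u'_1$ is a reduced left divisor of $u$, hence $u'_1\preceq\alpha(u)=u_1$; this is the base case of part (1). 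More generally I want: for each $i$, the element $u'_1\cdots u'_i$ is a (not necessarily reduced) left divisor of $u_1\cdots u_i$, and in particular the number of greedy factors is no larger than the number of Foata factors.

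For part (1), I would argue by induction on $i$. Suppose $u'_1\cdots u'_{i-1}\preceq u_1\cdots u_{i-1}$, say $u_1\cdots u_{i-1}=u'_1\cdots u'_{i-1}w$ for some $w\in M$. Then $u=u'_1\cdots u'_{i-1}\,w\,u_i\cdots u_n$, so after left-cancelling $u'_1\cdots u'_{i-1}$ the element $u'_i\cdots u'_m$ equals $w u_i\cdots u_n$. Every atom $s\preceq u'_i\cdots u'_m$ therefore divides $w u_i\cdots u_n$; if $s\not\preceq w$ then using left cancellation and repeated application of $\alpha(xy)=\alpha(x\alpha(y))$ (equation (\ref{eq8})) together with $u_i=\alpha(u_i\cdots u_n)$, as in the proof of Lemma \ref{lm8}, one pushes $s$ into $w u_i$; hence $s\preceq w u_i$ for every such $s$. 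Since $u'_i=\Delta(\{s\in S\mid s\preceq u'_i\cdots u'_m\})$ is the least common right multiple of those atoms, and $w u_i$ is a common right multiple of them, we get $u'_i\preceq w u_i$, and therefore $u'_1\cdots u'_i\preceq u'_1\cdots u'_{i-1}wu_i=u_1\cdots u_i$. This gives the divisibility claim, and since each $u'_j$ and each $u_j$ is nontrivial, comparing lengths (using the norm, when available, or just that all factors are nontrivial and $M$ is atomic) forces $n\le m$. The main obstacle here is the bookkeeping in the step ``$s\preceq wu_i\cdots u_n\Rightarrow s\preceq wu_i$'': it requires carefully iterating (\ref{eq8}) along the greedy tail and invoking that $s$ is reduced, exactly the mechanism already used in Lemma \ref{lm8}, so I would phrase it as a small reusable sub-claim.

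For part (2), assume $u'_1\cdots u'_j\preceq u_1\cdots u_i$. I want to show the first $j$ Foata factors of $u_1\cdots u_i$ coincide with $u'_1,\dots,u'_j$. Write $u_1\cdots u_i=u'_1\cdots u'_j z$ with $z\in M$; since $u=u_1\cdots u_i\,u_{i+1}\cdots u_n=u'_1\cdots u'_j\,z\,u_{i+1}\cdots u_n$ and uniqueness of the Foata form gives $u'_1\cdots u'_j$ as a prefix of the Foata form of $u$, the point is that the defining property of a Foata factor, $u'_l=\Delta(\{s\in S\mid s\preceq u'_l\cdots u'_m\})$, depends only on which atoms divide the tail, and every atom dividing $u'_l\cdots u'_j z$ already divides $u'_l\cdots u'_m$ (as $z u_{i+1}\cdots u_n=u'_{j+1}\cdots u'_m$ after cancellation) — combined with the fact established in part (1) that no ``new'' atom is created. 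Hence each $u'_l$ for $l\le j$ is still the correct $l$-th Foata factor of the truncated word $u_1\cdots u_i$, i.e. $\iota_2^{[j]}(u_1\cdots u_i)=(u'_1,\dots,u'_j)$. The delicate point is to check that truncating at $u_1\cdots u_i$ does not enlarge the atom-sets $\{s\in S\mid s\preceq u'_l\cdots u'_j z\}$ beyond $\{s\in S\mid s\preceq u'_l\cdots u'_m\}$; this follows because $z$ left-divides the tail $u_{i+1}\cdots u_n$-completion, so every atom dividing the truncated tail divides the full tail, while the reverse inclusion is automatic from $u'_1\cdots u'_j\preceq u_1\cdots u_i$. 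I expect part (2) to be the shorter half once the sub-claim from part (1) is in hand.
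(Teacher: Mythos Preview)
Your plan is correct; a few remarks on presentation and on how it compares to the paper's proof.

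For part (1), the paper simply cites \cite[Propositions 4.8 and 4.10]{Mich} (via \cite{BDM}); your explicit induction, using that $u'_i$ is the join of the atoms dividing the tail and the mechanism of Lemma~\ref{lm8} to push each such atom into $wu_i$, is essentially the content of those references, so you are writing out what the paper outsources. One imprecision: your derivation of $n\le m$ ``by comparing lengths'' does not work as stated, since the divisibility $u'_1\cdots u'_i\preceq u_1\cdots u_i$ gives no length inequality in the needed direction. The clean fix is: your induction runs for $i\le\min(n,m)$; if $m<n$, then at $i=m$ you obtain $u=u'_1\cdots u'_m\preceq u_1\cdots u_m\preceq u$, forcing $u_1\cdots u_m=u$ and contradicting $|u|_1=n>m$.

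For part (2), the paper argues by induction on $j$, while you verify directly that for each $l\le j$ the atom set $\{s\in S\mid s\preceq u'_l\cdots u'_j z\}$ coincides with $\{s\in S\mid s\preceq u'_l\cdots u'_m\}$: one inclusion because $u'_l\cdots u'_j z$ left-divides $u'_l\cdots u'_m$, the other because every atom in the larger set already divides $u'_l$. This is correct and arguably cleaner than the paper's induction. Your reference to part (1) (``no new atom is created'') is unnecessary here and slightly muddies the argument; drop it.
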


\begin{proof}
\begin{enumerate}
\item The fact that $n\leq m$ is a consequence of \cite[Proposition 4.8]{Mich}, also true for preGarside monoids, as stated in \cite{BDM}. On the other hand, by using $X_2\subseteq M_{\mathrm{red}}$ and the same proof as \cite[Proposition 4.10]{Mich}, we get $u'_1\cdots u'_i\preceq u_1\cdots u_i$ for all $i\leq n$.




\item Set $i\leq n$. Assume $u'_1\cdots u'_j\preceq u_1\cdots u_i$. We prove that $\iota^{[j]}_2(u_1\cdots u_i)=(u'_1,\dots,u'_j)$ by induction on $j$. Since $\iota^{[1]}_2(u)=(u'_1)$ and $u'_1\preceq u_1\cdots u_i\preceq u$, we have $\iota^{[1]}_2(u_1\cdots u_i)=(u'_1)$. Hence, the property is true for $j=1$. Assume $j\geq2$ plus the induction hypothesis. Write $u_1\cdots u_i=u'_1\cdots u'_jv$ with $v\in M$. By the induction hypothesis, we have $\iota^{[j]}_2(u_1\cdots u_i)=(u'_1,\dots,u'_{j-1},\iota^{[1]}_2(u'_jv))$. On the other hand, $\iota^{[1]}_2(u'_j\cdots u'_m)=(u'_j)$ and $u'_j\cdots u'_m=u'_jxu_{i+1}\cdots u_n$. This imposes $\iota^{[1]}_2(u'_jx)=(u'_j)$. Therefore $\iota^{[j]}_2(u_1\cdots u_i)=(u'_1,\dots,u'_j)$.
\end{enumerate}
\end{proof}

\begin{prp}\label{prp4}
Let $M$ be a finitely generated preGarside monoid. Then we have $$d_2\leq d_1.$$
\end{prp}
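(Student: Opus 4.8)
The plan is to show that $r_2(u,v)\ge r_1(u,v)$ for all distinct $u,v\in M$, which immediately gives $d_2(u,v)=2^{-r_2(u,v)}\le 2^{-r_1(u,v)}=d_1(u,v)$. Fix $u,v$ distinct, write $\iota_1(u)=(u_1,\dots,u_n)$, $\iota_1(v)=(v_1,\dots,v_n')$ and set $k=r_1(u,v)$, so that $u_i=v_i$ for $1\le i\le k$ and (if the sequences are long enough) $u_{k+1}\ne v_{k+1}$. The case $k=0$ is trivial, so assume $k\ge1$. The point is to compute an initial segment of the Foata normal form $\iota_2$ from the initial segment $(u_1,\dots,u_k)=(v_1,\dots,v_k)$ of the greedy form, using Lemma \ref{lm9}.

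Here is the key step. By Lemma \ref{lm9}(1) applied to $u$, we have $u'_1\cdots u'_j\preceq u_1\cdots u_j$ for the Foata form $\iota_2(u)=(u'_1,\dots,u'_m)$; I want to isolate how much of $\iota_2(u)$ is determined by $(u_1,\dots,u_k)$. Let $j$ be the largest index with $u'_1\cdots u'_j\preceq u_1\cdots u_k$ (this is well-defined since $u'_1\preceq u_1$). Then by Lemma \ref{lm9}(2), $\iota_2^{[j]}(u_1\cdots u_k)=(u'_1,\dots,u'_j)$, so this length-$j$ prefix of the Foata form depends only on the common word $(u_1,\dots,u_k)$, hence equals $\iota_2^{[j]}(v_1\cdots v_k)$ and so $\iota_2^{[j]}(v)=(u'_1,\dots,u'_j)$ as well. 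Thus $r_2(u,v)\ge j$, and it remains to show $j\ge k$. This is where the maximality of the greedy (left-weighted) decomposition is used: each $u_i=\alpha(u_i\cdots u_n)$ is the $\preceq$-maximal element of $M_{\mathrm{red}}$ dividing $u_i\cdots u_n$, whereas each $u'_i=\Delta(\{s\in S\mid s\preceq u'_i\cdots u'_m\})$ — I expect an argument showing that the Foata decomposition ``advances no faster'' than the greedy one, i.e. that $k$ greedy factors consume at least $k$ Foata factors, formally $u'_1\cdots u'_k\preceq u_1\cdots u_k$. Indeed this last inequality is precisely Lemma \ref{lm9}(1), so $j\ge k$ and therefore $r_2(u,v)\ge k=r_1(u,v)$.

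Actually the argument can be streamlined: by Lemma \ref{lm9}(1) applied to $u$ we directly have $u'_1\cdots u'_k\preceq u_1\cdots u_k$ (taking $i=k\le n$; if $k>n$ then $u=u_1\cdots u_n$ is already in $M_{\mathrm{red}}$-free... but actually $k\le\min(n,n')$ automatically since $r_1$ counts matching factors, so $k\le n$). Then Lemma \ref{lm9}(2) with $i=k$, $j=k$ gives $\iota_2^{[k]}(u_1\cdots u_k)=(u'_1,\dots,u'_k)$. The same reasoning applied to $v$ with its Foata form $\iota_2(v)=(v'_1,\dots,v'_{m'})$ gives $v'_1\cdots v'_k\preceq v_1\cdots v_k=u_1\cdots u_k$ and $\iota_2^{[k]}(v_1\cdots v_k)=(v'_1,\dots,v'_k)$. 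Since $u_1\cdots u_k=v_1\cdots v_k$, these two truncated Foata forms coincide, so $u'_i=v'_i$ for $1\le i\le k$, whence $r_2(u,v)\ge k=r_1(u,v)$ and $d_2(u,v)\le d_1(u,v)$.

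The main obstacle is purely bookkeeping: making sure the indices line up (in particular that $k\le n$ and $k\le n'$, and that when $k$ equals the full length of one normal form the inequality is vacuously fine since then $r_1(u,v)=\infty$ forces $u=v$), and invoking Lemma \ref{lm9}(2) with the correct substitution of arguments. No new structural input is needed beyond Lemma \ref{lm9}; the whole content is that an initial segment of the greedy normal form pins down an initial segment of the Foata normal form of at least the same combinatorial ``depth''.
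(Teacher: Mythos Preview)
Your proof is correct and rests on the same tool as the paper's, namely Lemma~\ref{lm9}: the first $k$ greedy factors determine the first $k$ Foata factors. You run the argument in the ``forward'' direction (set $k=r_1(u,v)$, apply Lemma~\ref{lm9}(1) to get $u'_1\cdots u'_k\preceq u_1\cdots u_k$, then Lemma~\ref{lm9}(2) with $i=j=k$ to conclude $\iota_2^{[k]}(u_1\cdots u_k)=(u'_1,\dots,u'_k)$, and symmetrically for $v$), whereas the paper runs it ``backward'' (set $k=r_2(u,v)$, use Lemma~\ref{lm9} to locate $u'_1\cdots u'_{k+1}$ inside $u_1\cdots u_{k+1}$ and argue by contradiction that $u_1\cdots u_{k+1}\ne v_1\cdots v_{k+1}$). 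Your route is slightly more direct and avoids the paper's case split on whether $r_2(u,v)$ equals the full Foata length; otherwise the two proofs are interchangeable.

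One small correction to your bookkeeping remark: it is not true that $k=\min(n,n')$ forces $r_1(u,v)=\infty$. If, say, $n<n'$ and $u_i=v_i$ for all $i\le n$, then $r_1(u,v)=n$ while $u\ne v$. This does not affect your argument, since in that boundary case Lemma~\ref{lm9}(1)(2) still apply verbatim with $i=j=k=n\le m$; just drop the misleading parenthetical.
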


\begin{proof}
Let $u,v$ be in $M$ and distinct. Set $\iota_1(u)=(u_1,\dots,u_n)$, $\iota_1(v)=(v_1,\dots,v_{n'})$, $\iota_2(u)=(u'_1,\dots,u'_m)$ and $\iota_2(v)=(v'_1,\dots,v'_{m'})$. By Lemma \ref{lm9}, $n\leq m$ and $n'\leq m'$. If $m=r_2(u,v)$ or $m'=r_2(u,v)$, then $r_1(u,v)\leq\min(n,n')\leq\min(m,m')=r_2(u,v)$. Therefore $d_2(u,v)\leq d_1(u,v)$. So assume $m<r_2(u,v)$ and $m'<r_2(u,v)$ and set $k=r_2(u,v)$. By assumption $u'_{k+1}\neq v'_{k+1}$. We can therefore assume without restriction that $u'_{k+1}$ does not left divide $v'_{k+1}$. By Lemma \ref{lm9} $i.$ and $ii.$, $\iota^{[k+1]}_2(v_1\cdots v_{k+1})=(v'_1,\dots,v'_{k+1})$. Since $\iota^{[k+1]}_2(u'_1\cdots u'_{k+1})=(u'_1,\dots,u'_{k+1})$ and $u'_{k+1}$ does not left divide $v'_{k+1}$, it follows from Lemma \ref{lm9} that $u'_1\cdots u'_{k+1}$ does not left divide $v_1\cdots v_{k+1}$. But on the other hand, $u'_1\cdots u'_{k+1}$ left divides $u_1\cdots u_{k+1}$ by Lemma \ref{lm9} $i.$. Thus, $v_1\cdots v_{k+1}\neq u_1\cdots u_{k+1}$ and $r_1(u,v)\leq k$. Hence $r_1(u,v)\leq r_2(u,v)$ and $d_2(u,v)\leq d_1(u,v)$.
\end{proof}

\begin{df}\label{df1}
A mapping $\varphi:(X,d)\rightarrow(X',d')$ between metric spaces is {\em uniformly continuous} if $$\forall\varepsilon>0,~\exists\delta>0,~\forall x,y\in X_1:~(d(x,y)<\delta\Rightarrow d'(\varphi(x),\varphi(y))<\varepsilon).$$

If the identity mappings between $(X,d)$ and $(X,d')$ are uniformly continuous, we say that the metrics $d$ and $d'$ are {\em uniformly equivalent}. It is immediate that two equivalent metrics are consequently uniformly equivalent
\end{df}~

Below, we will show that in Garside monoids, $d_1$ is uniformly equivalent to $d_3$, and in trace monoids, $d_2$ and $d_3$ are uniformly equivalent. However, these metrics are not equivalent, nor uniformly equivalent in general. Here are some examples to illustrate that.

\begin{ex}~\label{ex5}
\begin{enumerate}
\item In an Artin monoid $M$ with $m_{ab}=\infty$ for some $a,b\in S$, the metric $d_1$ is not uniformly equivalent (and so not equivalent) to $d_2$, nor to $d_3$. Indeed, write $u_n=(ab)^n$ and $v_n=(ab)^{n+1}$. By definition of the metrics, we have $r_2(u_n,v_n)=r_3(u_n,v_n)=2n$, and since $u_n,v_n\in M_{\mathrm{red}}$, then $r_1(u_n,v_n)=0$. Thus we have $d_1(u_n,v_n)=1$ for all $n$, and $\mathrm{lim}_{n\rightarrow\infty}d_2(u_n,v_n)=\mathrm{lim}_{n \rightarrow\infty}d_3(u_n,v_n)=\mathrm{lim}_{n\rightarrow\infty}2^{-2n}=0$. So $d_1$ cannot be uniformly equivalent to $d_2$ or $d_3$.
\item The metrics $d_2$ and $d_3$ are not equivalent in general. Indeed, Let $M$ be an Artin monoid, with $a,b,c\in S$ such that $m_{ab}=2$ and $m_{ac},m_{bc}\geq3$. Consider $u_n=(ab)^n$ and $v_n=(ab)^nc$. Then $r_2(u_n,v_n)=n$ and $r_3(u_n,v_n)=2n$. We have $$\exists C>0,~d_2\leq Cd_3~~\Leftrightarrow~~\exists C>0,~-r_2\leq\mathrm{log}_2(C)-r_3~~\Leftrightarrow~~\exists C'\in\R,~r_3\leq r_2+C'.$$ But in our example, $r_3=2n$ and $r_2=n$, so there is no $C'$ in $\R$ such that $r_3\leq r_2+C'$. Therefore, $d_2$ and $d_3$ cannot be equivalent.
\item In a non-abelian Artin monoid $M$, the metric $d_1$ is not equivalent to $d_2$, nor to $d_3$. Indeed, let $a,b\in S$ such that $m_{ab}\geq3$. Consider $u_n=(abba)^n$ and $v_n=(abba)^{n+1}$. Then $r_1(u_n,v_n)=2n$ and $r_2(u_n,v_n)=r_3(u_n,v_n)=4n$. Or, as in ii., the existence of some $C>0$ such that $d_1\leq Cd_2$ and $d_1\leq Cd_3$, means there is a $C'$ in $\R$ such that $r_3\leq r_1+C'$ and $r_2\leq r_1+C'$, which is impossible for our example.
\end{enumerate}
\end{ex}~

\subsection{The case of trace monoids}

In this subsection, we focus on {\em right angled Artin monoids (RAAM)}. A RAAM, or a {\em trace monoid}, is an Artin monoid $M=\langle S\mid[a,b\rangle^{m_{ab}}=[b,a\rangle^{m_{ab}}; m_{ab}\neq\infty\rangle^+$, where $m_{ab}\in\{2,\infty\}$ for all $a,b\in S$. Our objective is to obtain a complete comparison of $d_1$, $d_2$ and $d_3$ for trace monoids. Here we prove :
~~\\
~~\\

\begin{prp}\label{prp5}
Assume $M$ is a trace monoid. Then
\begin{enumerate}
\item $d_2$ and $d_3$ are uniformly equivalent.
\item if $M$ is not free abelian, $d_1$ is not uniformly equivalent to $d_2$, nor to $d_3$.
\item if $M$ is the free abelian monoid, $d_1=d_2=d_3$.
\end{enumerate}
\end{prp}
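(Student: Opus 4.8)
The plan is to handle the three items in sequence, with (iii) being essentially immediate and (i) and (ii) requiring a bit more care. For item (iii), if $M$ is free abelian on $S = \{s_1,\dots,s_k\}$, then $M_{\mathrm{red}}$, the set of reduced elements, coincides with $X_2 = \{\Delta(T) \mid T\subseteq S\}$: every subset of $S$ has an lcm (the commuting product of its elements), and these $2^k$ squarefree elements are exactly the divisors of $\Delta(S)$. Hence the greedy normal form and the Foata normal form literally coincide term by term, so $\iota_1 = \iota_2$ and thus $d_1 = d_2$. For the equality with $d_3$, I would observe that in the abelian case each Foata block $u'_i$ consists of distinct commuting generators, and $\mathrm{Pref}_n(u)$ is determined by (and determines) the multidegree of the prefix of $u$ obtained by reading the Foata form greedily up to length $n$; more cleanly, one checks directly that two elements of $\N^k$ agree in all coordinates on their ``length-$n$ truncations'' iff their greedy forms agree up to the corresponding block, giving $r_1 = r_2 = r_3$ on all pairs. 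Since $d_1 \ge d_2$ and $d_1 \ge d_3$ always hold by Propositions \ref{prp4} and \ref{prp3}, it suffices to prove the reverse inequalities, i.e. $r_1(u,v) \le r_2(u,v)$ and $r_1(u,v)\le r_3(u,v)$, which in the abelian case follow from the explicit description above.

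For item (ii): if $M$ is a trace monoid that is not free abelian, then some pair $a,b\in S$ has $m_{ab} = \infty$, so $M$ contains the free submonoid $\langle a,b\rangle^+$. Then I would reuse Example \ref{ex5}(i) verbatim: setting $u_n = (ab)^n$ and $v_n = (ab)^{n+1}$, both lie in $M_{\mathrm{red}}$ (a reduced word over non-commuting generators has trivial greedy head equal to the whole word only if... — more precisely $(ab)^n$ is itself in $M_{\mathrm{red}}$ since its image in the Coxeter group has the same length), so $r_1(u_n,v_n) = 0$ while $r_2(u_n,v_n) = r_3(u_n,v_n) = 2n \to \infty$. Thus $d_1(u_n,v_n) = 1$ for all $n$ but $d_2(u_n,v_n), d_3(u_n,v_n)\to 0$, contradicting uniform equivalence in either direction (the identity from $(M,d_2)$ or $(M,d_3)$ to $(M,d_1)$ is not uniformly continuous).

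The real content is item (i): $d_2$ and $d_3$ are uniformly equivalent on any trace monoid. The inequality $r_3 \ge$ (something like $\tfrac12 r_2$) or more precisely a bound $r_2 \le r_3$ up to an additive constant gives $d_3 \le C d_2$ for some constant, and conversely I expect $r_3 \le f(r_2)$ for a suitable function. The key combinatorial fact is that in a trace monoid each Foata block has length (in $\ell_S$) at most $|S|$, so if the first $k$ Foata blocks of $u$ and $v$ coincide, then $\mathrm{Pref}_n(u) = \mathrm{Pref}_n(v)$ at least for $n$ up to the total length of those $k$ blocks, which is between $k$ and $k|S|$; this yields $r_3(u,v) \ge r_2(u,v)$, hence $d_3 \le d_2$. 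For the other direction, the heart of the matter is that the Foata normal form of a trace is recoverable from its prefix sets: if $\mathrm{Pref}_n(u) = \mathrm{Pref}_n(v)$, then the first several Foata blocks agree — specifically, knowing all prefixes up to length $n$ determines the maximal initial ``clique'' $\Delta(\{s\in S\mid s\preceq u\})$ and, inductively, enough blocks that $r_2(u,v) \ge n/|S|$ or so. I would make this precise by induction on the blocks, using that $\mathrm{Pref}_1(u)$ determines which generators divide $u$ hence the first block $u'_1$, then passing to $u'^{-1}_1 u$ (valid since trace monoids are cancellative and this quotient is again a trace). This gives $r_2(u,v) \ge \lfloor r_3(u,v)/|S|\rfloor$ and therefore $d_2 \le C\, d_3^{1/|S|}$ — enough for uniform equivalence since both metrics take values in $\{2^{-k}\}$ and $d_2 \le 2^{|S|} d_3^{1/|S|}$, so $d_3$ small forces $d_2$ small. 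The main obstacle I anticipate is getting the inductive step for ``prefix sets determine Foata blocks'' completely right: one must verify that the quotient by the first block is well-behaved and that no information about later blocks is lost when truncating prefix sets at length $n$, which requires the commutation structure of traces and is exactly where the RAAM hypothesis (as opposed to general Artin monoids) is used.
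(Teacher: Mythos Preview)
Your treatment of items (ii) and (iii) matches the paper's and is fine. For item (i), however, there is a genuine gap in your argument for $d_3\le d_2$.

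You claim that if the first $k$ Foata blocks of $u$ and $v$ coincide, then $\mathrm{Pref}_n(u)=\mathrm{Pref}_n(v)$ for all $n$ up to the \emph{total length} $\ell_S(u'_1\cdots u'_k)$ of those blocks. This is false. Take $M$ free abelian on $\{a,b,c\}$, $u=a^2bc$, $v=ab^2c$. Both have first Foata block $abc$ (length~3), so $k=1$ and the total block length is~3; but $a^2\in\mathrm{Pref}_2(u)\setminus\mathrm{Pref}_2(v)$, so already $\mathrm{Pref}_2(u)\neq\mathrm{Pref}_2(v)$. The point is that a length-$n$ prefix of $u$ need not be a prefix of $u'_1\cdots u'_k$ even when $n\le\ell_S(u'_1\cdots u'_k)$: it can dip into later blocks.

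What \emph{is} true (and suffices for $r_3\ge r_2$) is the weaker statement that any prefix of $u$ of length $\le k$ is already a prefix of $u'_1\cdots u'_k$. This is the content of the paper's Lemma~\ref{lm10}: if $w\preceq u$ with $\iota_2(w)=(w_1,\dots,w_j)$, then $w_1\cdots w_i\preceq u'_1\cdots u'_i$ for all $i\le j$; in particular $w\preceq u'_1\cdots u'_j\preceq u'_1\cdots u'_k$ since $j\le\ell_S(w)\le k$. The proof of Lemma~\ref{lm10} is not a length-counting argument but uses Van~Wyk's explicit description of how the Foata normal form changes under right multiplication by a generator, which is exactly where the trace-monoid hypothesis enters. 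Your sketch skips this step entirely.

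For the reverse direction your inductive ``peel off the first block'' approach is different from the paper's (Lemma~\ref{lm12} argues directly that $u'_1\cdots u'_{k+1}\not\preceq v$ when $k=r_2(u,v)$, giving $r_3\le p(k+1)$), but it is viable: after cancelling the common first block $w=u'_1=v'_1$, cancellativity gives $\mathrm{Pref}_j(w^{-1}u)=\mathrm{Pref}_j(w^{-1}v)$ for $j\le r_3-\ell_S(w)$, and since each block has length at most $|S|$ you can iterate to get $r_2\ge \lfloor r_3/|S|\rfloor$. So that half is fine, modulo writing out the induction carefully.
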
~

Indeed, point $i.$ was already proved in \cite{KK} by a topological argument. We provide an algebraic one. Let us start with the following remark :

\begin{rmq}\label{rmq1}
If $M\simeq\mathrm{F}^+_p$ is the free monoid, i.e. $m_{ab}=\infty$ for all $a,b\in S$, then $d_2=d_3$.
\end{rmq}

\begin{proof}
Let $u,v$ lie in $M$. Set $\iota_2(u)=(u_1,\dots,u_n)$ and $\iota_2(v)=(v_1,\dots,v_m)$. For all $a,b\in S$, $m_{ab}=\infty$. Then $\{s\in S\mid s\preceq u_i\cdots u_n\}=\{u_i\}$, $\{s\in S\mid s\preceq v_i\cdots v_m\}=\{v_i\}$, $\mathrm{Pref}_i(u)=\{u_1\cdots u_i\}$ and $\mathrm{Pref}_i(v)=\{v_1\cdots v_i\}$ for every $i\leq\mathrm{min}\{n,m\}$. Hence, $r_2(u,v)=r_3(u,v)$ and therefore $d_2(u,v)=d_3(u,v)$.
\end{proof}

For the remaining of the section, we fix a trace monoid $M=\langle S\mid ab=ba; m_{ab}\neq\infty\rangle^+$, and set $p=|S|$. For every $u\in M$, let $\xi(u)$ denote the {\em support} of $u$, i.e. the set of atoms (elements of $S$) occurring in any expression of $u$.


\begin{lm}\label{lm10}
Let $u,v\in M$ such that $v\preceq u$. Set $\iota_2(u)=(u_1,\dots,u_n)$ and $\iota_2(v)=(v_1,\dots,v_k)$. Then $k\leq n$, and for all $i\leq k$, we have $$v_1\cdots v_i\preceq u_1\cdots u_i.$$
\end{lm}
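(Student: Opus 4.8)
The statement to prove is Lemma \ref{lm10}: in a trace monoid $M$, if $v\preceq u$ with Foata normal forms $\iota_2(u)=(u_1,\dots,u_n)$ and $\iota_2(v)=(v_1,\dots,v_k)$, then $k\le n$ and $v_1\cdots v_i\preceq u_1\cdots u_i$ for all $i\le k$. My plan is to exploit the concrete combinatorial description of the Foata normal form in trace monoids: each block $u_i$ is the product $\Delta(A_i)$ of the clique $A_i=\{s\in S\mid s\preceq u_i\cdots u_n\}$, which here is simply the set of atoms occurring at ``level $i$'' in the heap/trace picture. The inequality $k\le n$ is immediate from Lemma \ref{lm9}(1) (or directly: a prefix of $u$ cannot have more Foata blocks than $u$, since the Foata length is monotone under left division). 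So the real content is the per-level divisibility.

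\textbf{Key steps.} First I would set up the induction on $i$. For $i=1$: $v_1=\Delta(\xi_1(v))$ where $\xi_1(v)=\{s\in S\mid s\preceq v\}$, and since $v\preceq u$ every such $s$ also left-divides $u$, hence $s\preceq u_1=\Delta(\{s\in S\mid s\preceq u\})$; as these $s$ pairwise commute (they all divide the clique element $u_1$), their lcm $v_1$ divides $u_1$. For the inductive step, assume $v_1\cdots v_{i-1}\preceq u_1\cdots u_{i-1}$; write $u=vw$ and cancel the common prefix. The cleanest route is to use left cancellativity: from $v_1\cdots v_{i-1}\preceq u_1\cdots u_{i-1}$ write $u_1\cdots u_{i-1}=v_1\cdots v_{i-1}\,t$ for some $t\in M$, so that $u_i\cdots u_n = t^{-1}$-part — more precisely $u = v_1\cdots v_{i-1}\,t\,u_i\cdots u_n$ and $u = v_1\cdots v_{i-1}\,(v_i\cdots v_k)\,w$, whence by cancellativity $v_i\cdots v_k\, w = t\,u_i\cdots u_n$. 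Now I want to show each atom $s\preceq v_i$ satisfies $s\preceq t\,u_i$. Since $s\preceq v_i$, we have $s\preceq v_i\cdots v_k\preceq v_i\cdots v_k\,w = t\,u_i\cdots u_n$. In a trace monoid, if $s\preceq t u_i\cdots u_n$ then $s$ must already left-divide $t u_i$: indeed the greedy-normal-form identity $\alpha(t u_i\cdots u_n)=\alpha(t u_i)$ from \eqref{eq8}, combined with $s\in S\subseteq M_{\mathrm{red}}$, gives $s\preceq\alpha(t u_i\cdots u_n)=\alpha(t u_i)\preceq t u_i$ — this is exactly the argument used in the proof of Lemma \ref{lm8}. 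Finally, gathering all atoms $s\preceq v_i$: they pairwise commute (all divide $v_i=\Delta(\xi_i(v))$), each divides $t u_i$, so their lcm $v_i$ divides $t u_i$; hence $v_1\cdots v_{i-1}\,v_i \preceq v_1\cdots v_{i-1}\,t\,u_i = u_1\cdots u_{i-1}\,u_i$, completing the induction.

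\textbf{Main obstacle.} The delicate point is the step ``$s\preceq t u_i\cdots u_n \Rightarrow s\preceq t u_i$'' for a single atom $s$, and then upgrading from single atoms to the whole block $v_i$. The first half is handled by the $\alpha$-machinery of \cite{BDM} exactly as in Lemma \ref{lm8}; I must be careful that $M_{\mathrm{red}}$ here is the set of ``square-free'' traces and that $u_i\in M_{\mathrm{red}}$ so that $\alpha(t u_i\cdots u_n)=\alpha(t\alpha(u_i\cdots u_n))$ applies with $\alpha(u_i\cdots u_n)=u_i$ (since $u_i\cdots u_n$ is in normal form, its first block is its $\alpha$). The second half — that the individual divisibilities $s\preceq t u_i$ assemble into $v_i=\bigvee_s s\preceq t u_i$ — is valid because in a trace monoid a set of pairwise-commuting atoms has an lcm which divides any common multiple; this needs the explicit trace-monoid structure (it would fail in a general preGarside monoid), which is why this lemma is stated in the trace-monoid section rather than the preGarside one. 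Once both halves are in place the induction closes routinely.
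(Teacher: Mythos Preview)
Your overall strategy---induction on the block index~$i$, writing $u_1\cdots u_{i-1}=v_1\cdots v_{i-1}t$ and then showing every atom of $v_i$ left-divides $tu_i$---is sound, but the justification of the key step contains a genuine error. You claim $\alpha(tu_i\cdots u_n)=\alpha(tu_i)$ ``since $u_i\cdots u_n$ is in normal form, its first block is its $\alpha$''. This conflates the two normal forms: the $u_j$ here are \emph{Foata} blocks, so $(u_i,\dots,u_n)$ is a Foata normal form, whereas $\alpha$ returns the first \emph{greedy} block. In general $\alpha(u_i\cdots u_n)\neq u_i$; for instance in the free monoid on $\{a,b\}$ one has $\iota_2(ab)=(a,b)$ but $\alpha(ab)=ab\in M_{\mathrm{red}}$. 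Consequently the identity $\alpha(tu_i\cdots u_n)=\alpha(tu_i)$ fails, and the appeal to the proof of Lemma~\ref{lm8} (which concerns greedy blocks) is misplaced. The \emph{conclusion} $s\preceq tu_i\cdots u_n\Rightarrow s\preceq tu_i$ is nevertheless true in a trace monoid, but it needs a direct argument: either $s\preceq t$, or $s\notin\xi(t)$ and then every letter of $t$ commutes with $s$, whence $s\preceq u_i\cdots u_n$, so $s\preceq u_i$ by the Foata property, and $s\preceq tu_i$ follows since $s$ commutes with~$t$. A second, smaller gap: Lemma~\ref{lm9}(1) compares $|u|_1$ and $|u|_2$ for the \emph{same} element and says nothing about $|v|_2\le|u|_2$ for $v\preceq u$; your parenthetical alternative is precisely what has to be proved.

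For comparison, the paper takes a quite different route: it cites Van~Wyk's explicit description of how $\iota_2(ws)$ is obtained from $\iota_2(w)=(w_1,\dots,w_l)$ by inserting the atom $s$ into one block (or appending a new one), observes that in every case $|ws|_2\ge|w|_2$ and $w_1\cdots w_i\preceq w'_1\cdots w'_i$, and then writes $u=vs_1\cdots s_m$ and chains these inequalities. This handles both $k\le n$ and the per-level divisibility simultaneously, without any use of $\alpha$.
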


\begin{proof}
Let $w$ be in $M$ and $s$ be in $S$. Assume $\iota_2(w)=(w_1,\dots,w_l)$, and set $\iota_2(ws)=(w'_1,\dots,w'_{l'})$. By \cite{VWy}, $\iota_2(ws)$ can be obtained in the following way. If $s\notin\xi(w)$ and $sw=ws$, then $\iota_2(ws)=(w_1s,\dots,w_l)$ and $|ws|_2=l$. If $s\in\xi(w_l)$ or $sw_l\neq w_ls$, then $\iota_2(ws)=(w_1,\dots,w_l,s)$ and $|ws|_2=l+1$. Otherwise, set $j_0=\min\{j\in\{1,\dots,l\};s\notin\xi(w_j\cdots w_l)~\text{and}~sw_j\cdots w_l=w_j\cdots w_ls\}$. We have $j_0<l$, $\iota_2(ws)=(w_1,\dots,w_{j_0}s,\dots,w_l)$ and $|ws|_2=l$. In all cases, $l'\geq l$ and $w_1\cdots w_i\preceq w'_1\cdots w'_i$ for all $i\leq l$.\\
Now we can write $u=vs_1\cdots s_m$ with $s_1,\dots,s_m$ in $S$, and apply the above argument to all the pairs $(vs_1\cdots s_{i-1},vs_1\cdots s_i)$ to conclude.
\end{proof}


\begin{lm}\label{lm11}
We have $$d_3\leq d_2.$$
\end{lm}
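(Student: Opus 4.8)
The plan is to show $r_3(u,v)\ge r_2(u,v)$ for all distinct $u,v\in M$, which immediately gives $d_3(u,v)=2^{-r_3(u,v)}\le 2^{-r_2(u,v)}=d_2(u,v)$. So I fix distinct $u,v\in M$, write $\iota_2(u)=(u_1,\dots,u_n)$ and $\iota_2(v)=(v_1,\dots,v_m)$, and set $k=r_2(u,v)$, so that $u_i=v_i$ for $i\le k$. The goal is then to prove $\mathrm{Pref}_i(u)=\mathrm{Pref}_i(v)$ for every $i\le k$ (recall from the remark after the definition of $d_3$ that $r_3(u,v)=\max\{i\mid\mathrm{Pref}_i(u)=\mathrm{Pref}_i(v)\}$, and that agreement of prefix sets at level $i$ implies it at all smaller levels).

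First I would reduce the statement about prefixes of $u$ to a statement about the initial Foata factors $u_1\cdots u_i$. Concretely, the key claim is: for $i\le|u|_2=n$ and any $\ell$ with $1\le\ell\le\lS(u_1\cdots u_i)$, one has $\mathrm{Pref}_\ell(u)=\mathrm{Pref}_\ell(u_1\cdots u_i)$ — i.e.\ all prefixes of length at most $\lS(u_1\cdots u_i)$ already ``live inside'' the first $i$ Foata blocks. This is the trace-monoid analogue of Lemma~\ref{lm8}, and I expect to prove it by the same kind of argument: if $w\preceq u$ with $\lS(w)\le\lS(u_1\cdots u_i)$, use Lemma~\ref{lm10} applied to $w\preceq u$ together with the characterization $v_j=\Delta(\{s\in S\mid s\preceq v_j\cdots v_k\})$ of the Foata form to push $w$ into $u_1\cdots u_i$; the fact that in a trace monoid the Foata normal form of $w$ has length $\le i$ once $\lS(w)\le\lS(u_1\cdots u_i)$ follows because consecutive Foata blocks are ``maximal cliques'', so $\lS(u_1\cdots u_i)$ dominates $\lS$ of any $i$-block normal form that is a prefix. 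Granting this claim for both $u$ and $v$: since $u_1\cdots u_k=v_1\cdots v_k$ (the first $k$ Foata blocks coincide), for every $\ell\le\lS(u_1\cdots u_k)$ we get $\mathrm{Pref}_\ell(u)=\mathrm{Pref}_\ell(u_1\cdots u_k)=\mathrm{Pref}_\ell(v)$.

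It remains to check that $\lS(u_1\cdots u_k)\ge k$ — equivalently $\ge r_2(u,v)$ — so that the range of $\ell$ covered is at least $\{1,\dots,k\}$ and hence $r_3(u,v)\ge k=r_2(u,v)$. This is clear because each nonempty Foata block $u_j$ has $\lS(u_j)\ge1$, so $\lS(u_1\cdots u_k)=\sum_{j=1}^k\lS(u_j)\ge k$ (and if $k\ge n$ then $k=n$, $u_1\cdots u_k=u$, and there is nothing to cover beyond what $u$ already has, giving $r_3\ge r_2$ trivially from $u=v$ being excluded). Putting the pieces together yields $r_3(u,v)\ge r_2(u,v)$ for all distinct $u,v$, hence $d_3\le d_2$.

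The main obstacle I anticipate is the prefix-to-block claim, i.e.\ the trace-monoid version of Lemma~\ref{lm8}: one must verify carefully, using the explicit description of how $\iota_2$ changes under right multiplication by an atom (as in the proof of Lemma~\ref{lm10}, citing \cite{VWy}), that a prefix $w$ of $u$ of length $\le\lS(u_1\cdots u_i)$ is forced to divide $u_1\cdots u_i$ and that its own Foata decomposition uses at most $i$ blocks. The subtlety is purely about the interaction of ``clique-maximality'' of Foata blocks with divisibility; everything else is bookkeeping with the $r$-functions and the convention $2^{-\infty}=0$.
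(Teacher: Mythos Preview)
Your overall strategy---show $r_3\ge r_2$ via Lemma~\ref{lm10}---is exactly the paper's. However, your ``key claim'' in full generality is false. Take atoms $a,b,c$ with $ab=ba$, $ac=ca$, and $m_{bc}=\infty$, and let $u=abac$. Then $\iota_2(u)=(ab,ac)$, so with $i=1$ one has $\lS(u_1)=2$; but $abac=aabc$, so $aa\in\mathrm{Pref}_2(u)$, while $aa\nprec ab=u_1$. Thus $\mathrm{Pref}_2(u)\neq\mathrm{Pref}_2(u_1)$. Your proposed justification---that $|w|_2\le i$ whenever $\lS(w)\le\lS(u_1\cdots u_i)$---fails here, since $|aa|_2=2>1$. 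The ``clique-maximality'' intuition does not control $|w|_2$ in terms of $\lS(u_1\cdots u_i)$.

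What rescues the argument is that you only ever invoke the claim with $i=k=r_2(u,v)$ and $\ell\le k$, and in that range the bound on $|w|_2$ is automatic for a much simpler reason: every Foata block has $\lS\ge1$, so $|w|_2\le\lS(w)=\ell\le k$. Then Lemma~\ref{lm10} gives $w\preceq u_1\cdots u_{|w|_2}\preceq u_1\cdots u_k$, and you are done. This is precisely how the paper proceeds: it never passes through $\lS(u_1\cdots u_i)$ at all, but directly takes $w$ of length $k$, notes $|w|_2\le k$, and applies Lemma~\ref{lm10} to get $w\preceq u\Leftrightarrow w\preceq u_1\cdots u_k\Leftrightarrow w\preceq v_1\cdots v_k\Leftrightarrow w\preceq v$. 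Drop the overly general intermediate claim and argue this way; the remainder of your proof (the $\lS(u_1\cdots u_k)\ge k$ check and the cascading of $\mathrm{Pref}$ equalities) is fine.
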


\begin{proof}
Let $u,v$ be distinct in $M$. Set $\iota_2(u)=(u_1,\dots,u_n)$ and $\iota_2(v)=(v_1,\dots,v_m)$. If $r_2(u,v)=0$, then $r_3(u,v)\geq r_2(u,v)$ and so $d_3(u,v)\leq d_2(u,v)$. Otherwise, let $w$ be in $M$ such that $\lS(w)=k=r_2(u,v)$. By Lemma \ref{lm10}, we have $w\preceq u~\Leftrightarrow~ w\preceq u_1\cdots u_k~\Leftrightarrow~ w\preceq v_1\cdots v_k~\Leftrightarrow~ w\preceq v$, so $\mathrm {Pref}_k(u)=\mathrm{Pref}_k(v)$. Thus, $r_3(u,v)\geq r_2(u,v)$ and so $d_3(u,v)\leq d_2(u,v)$.
\end{proof}~

\begin{lm}\label{lm12}
We have $$d_2^p\leq2^pd_3.$$
\end{lm}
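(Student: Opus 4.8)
The inequality $d_2^p\le 2^pd_3$ is, after taking logarithms, exactly the assertion that for distinct $u,v\in M$ one has $r_3(u,v)\le p\,r_2(u,v)+p$, i.e. $r_3(u,v)\le p\bigl(r_2(u,v)+1\bigr)$. So the plan is to prove this inequality on the indices $r_2,r_3$. The one structural fact about trace monoids that I would use is that each Foata block is short: if $\iota_2(u)=(u_1,\dots,u_n)$ then every $u_i$ equals $\Delta(T_i)$ for some $T_i\subseteq S$, and in a trace monoid a subset of $S$ has an lcm only when its elements pairwise commute, in which case $\Delta(T_i)=\prod_{s\in T_i}s$; hence $\lS(u_i)=|T_i|\le|S|=p$. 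In particular $\lS(u_1\cdots u_i)\le p\,i$ for every $i$.

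Now fix distinct $u,v$, write $k=r_2(u,v)$, so that $u_1=v_1,\dots,u_k=v_k$ in Foata normal form, and argue by contradiction assuming $r_3(u,v)\ge p(k+1)+1$. Since $r_3(u,v)\le\min\bigl(\lS(u),\lS(v)\bigr)$ (as prefixes of length $\lS(u)+1$ are empty), this forces $\lS(u),\lS(v)>p(k+1)\ge\lS(u_1\cdots u_k)$, so both $u$ and $v$ have at least $k+1$ Foata blocks; moreover $u_{k+1}\ne v_{k+1}$ because $r_2(u,v)=k$ exactly. The word $u_1\cdots u_{k+1}$ is a prefix of $u$ of length $\le p(k+1)\le r_3(u,v)$, hence $u_1\cdots u_{k+1}$ belongs to $\mathrm{Pref}_{\lS(u_1\cdots u_{k+1})}(u)=\mathrm{Pref}_{\lS(u_1\cdots u_{k+1})}(v)$, so $u_1\cdots u_{k+1}\preceq v$. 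A segment of a Foata normal form is again a Foata normal form (the set $\{s\in S\mid s\preceq u_j\cdots u_i\}$ coincides with $\{s\in S\mid s\preceq u_j\}=\{s\in S\mid s\preceq u_j\cdots u_n\}$ for $j\le i\le n$, since $\Delta$ of the latter set is $u_j$), so $\iota_2(u_1\cdots u_{k+1})=(u_1,\dots,u_{k+1})$. Applying Lemma \ref{lm10} to $u_1\cdots u_{k+1}\preceq v$ gives $u_1\cdots u_{k+1}\preceq v_1\cdots v_{k+1}$, and left cancelling $u_1\cdots u_k=v_1\cdots v_k$ yields $u_{k+1}\preceq v_{k+1}$. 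The symmetric argument, starting from the prefix $v_1\cdots v_{k+1}$ of $v$, gives $v_{k+1}\preceq u_{k+1}$. By antisymmetry of $\preceq$ in the atomic monoid $M$ we get $u_{k+1}=v_{k+1}$, a contradiction.

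Therefore $r_3(u,v)\le p(k+1)=p\,r_2(u,v)+p$, which gives $d_3(u,v)=2^{-r_3(u,v)}\ge 2^{-p}\,2^{-p\,r_2(u,v)}=2^{-p}d_2(u,v)^p$, i.e. $d_2^p\le 2^pd_3$; the case $u=v$ is trivial. The proof is almost entirely bookkeeping once the block-length bound $\lS(u_i)\le p$ is in hand; the only points requiring a little care are the degenerate cases where $u$ or $v$ has at most $k$ Foata blocks (handled uniformly by the bound $r_3\le\min(\lS(u),\lS(v))$, which makes the contradiction hypothesis impossible there) and the observation that a prefix of a Foata normal form is still a Foata normal form, needed so that Lemma \ref{lm10} applies to $u_1\cdots u_{k+1}$ and $v_1\cdots v_{k+1}$.
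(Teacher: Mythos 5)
Your proof is correct, and it follows the same overall strategy as the paper: bound each Foata block by $\lS(u_i)\le p$ and deduce $r_3(u,v)\le p\,(r_2(u,v)+1)$. The one place where you diverge is the key step, and your version is the more careful one. The paper simply asserts that, with $k=r_2(u,v)$ and $n>k$, the element $u_1\cdots u_{k+1}$ does not left-divide $v$; taken literally this is false in general (in $\N^2=\langle a,b\mid ab=ba\rangle^+$ take $u=a$, $v=ab$: then $k=0$ and $u_1=a$ does divide $v$), even though the bound $r_3\le\lS(u_1\cdots u_{k+1})$ it is meant to support still holds in that example. What is actually true --- and what you prove --- is that $u_1\cdots u_{k+1}\preceq v$ and $v_1\cdots v_{k+1}\preceq u$ cannot hold simultaneously: via Lemma \ref{lm10} and left cancellation they would force $u_{k+1}\preceq v_{k+1}$ and $v_{k+1}\preceq u_{k+1}$, hence $u_{k+1}=v_{k+1}$, contradicting $r_2(u,v)=k$. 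This two-sided argument is exactly the one the paper itself uses in the Garside case (Proposition \ref{prp6}), and it yields $r_3\le\max\{\lS(u_1\cdots u_{k+1}),\lS(v_1\cdots v_{k+1})\}\le p(k+1)$, which is all that is needed. Your auxiliary observations --- that a prefix of a Foata normal form is again a Foata normal form, and that $r_3(u,v)\le\min(\lS(u),\lS(v))$ disposes of the degenerate cases where $u$ or $v$ has at most $k$ blocks --- are both correct and are genuinely needed to make Lemma \ref{lm10} applicable.
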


\begin{proof}
Let $u,v$ be in $M$. Set $\iota_2(u)=(u_1,\dots,u_n)$ and $\iota_2(v)=(v_1,\dots,v_m)$, and denote $k=r_2(u,v)$. Since we have $m_{st}\in\{2,\infty\}$ for all $s,t\in S$, then $u_i=\Delta\{s\in S\mid s\preceq u_i\cdots u_n\}=\prod\{s\in S\mid s\preceq u_i\cdots u_n\}$ and $\lS(u_i)\leq p$ for all $i$. Assuming that $u\neq v$ and $n>k$, we have $u_1\cdots u_{k+1}\nprec v$. So $r_3(u,v)\leq\lS(u_1\cdots u_{k+1})\leq p(k+1)=pr_2(u,v)+p$. Thus, $-pr_2(u,v)\leq p-r_3(u,v)$. And therefore, $(d_2(u,v))^p\leq2^pd_3(u,v)$.
\end{proof}~






We may now prove our proposition.

\begin{proof}[Proof of Proposition \ref{prp5}]~
\begin{enumerate}
\item Gathering Lemmas \ref{lm11} and \ref{lm12}, we get immediately that $d_2$ and $d_3$ are uniformly equivalent.
\item Example \ref{ex5} i.
\item Assume $M\simeq\N^p$ is the free abelian monoid, i.e. $m_{ab}=2$ for all $a,b\in S$. Let $u,v$ be in $M$. Set $\iota_1(u)=(u_1,\dots,u_n)$ and $\iota_1(v)=(v_1,\dots,v_m)$. Since $ab=ba$ for all $a,b\in S$, we have $M_{\mathrm{red}}=\{\Delta(T)\mid T\subseteq S\}=X_2$. So, for every $w\in M$, $\iota_1(w)=\iota_2(w)$. Thus $r_1(u,v)=r_2(u,v)$ and  $d_1(u,v)=d_2(u,v)$. If $\xi(u)\neq\xi(v)$, then $r_1(u,v)=r_2(u,v)=r_3(u,v)=0$, and so $d_1(u,v)=d_2(u,v)=d_3(u,v)$. If $\xi(u)=\xi(v)=\{s_1,\dots,s_k\}$ and $u\neq v$, write $$u=s_1^{f_1}\cdots s_k^{f_k}~~~~\text{and}~~~~v=s_1^{g_1}\cdots s_k^{g_k}.$$ The set $\mathrm{T}=\{f_i,g_i\mid f_i\neq g_i\}$ is non empty because $u\neq v$. Let $q=\mathrm{min}\mathrm{(T)}$. We may suppose $q=f_j$ for some $1\leq j\leq k$. Thus, for all $i\leq q$, we have $u_i=v_i$, $s_j\nprec u_{q+1}$ and $s_j\preceq v_{q+1}$. Therefore $r_1(u,v)=r_2(u,v)=q$. On the one hand, $s_j^{q+1}\preceq v$ and $s_j^{q+1}\nprec u$, therefore $\mathrm{Pref}_{q+1}(u)\neq\mathrm{Pref}_{q+1}(v)$. On the other hand, let $w\in M$ such that $w\preceq u$ and $\lS(w)=q$. Then we can write $w=s_1^{h_1}\cdots s_k^{h_k}$ with $h_1+\cdots+h_k=q$. Since $w\preceq u$, for $1\leq i\leq k$, we have $h_i\leq f_i$. If $f_i=g_i$, then $h_i\leq g_i$. And if $f_i\neq g_i$, since $q=\mathrm{min}\mathrm{(T)}$ and $h_i\leq q$, one has $h_i\leq g_i$. So for all $1\leq i\leq k$, we have $h_i\leq g_i$, which means that $w\preceq v$. Similarly, if $w\preceq v$ with $\lS(w)=q$, then $w\preceq u$ as well. Hence, $\mathrm{Pref}_q(u)=\mathrm{Pref}_q(v)$, and therefore $r_3(u,v)=q$. Thus, $d_1(u,v)=d_2(u,v)=d_3(u,v)$.
\end{enumerate}
\end{proof}





\subsection{The case of Garside monoids}

In this subsection, we show that in a finitely generated Garside monoid, equiped with a length $\ell_S$, the metrics $d_1$ and $d_3$ are uniformly equivalent. Recall that a {\em Garside monoid} is a preGarside monoid containing a {\em Garside element}, i.e. a balanced element whose set of divisors generates the whole monoid. let $M$ be a Garside monoid with a Garside element $\Delta$. One of the Garside element important properties is that for all $u\in M$, $\alpha(u)$ is the greatest common (left) divisor of $u$ and $\Delta$, denoted by $u\wedge\Delta$. In other words, we have $u\in M_{\mathrm{red}}~\Leftrightarrow~u\preceq\Delta$.






\begin{prp}\label{prp6}
Let $M$ be a finitely generated Garside monoid, equiped with a length $\ell_S$. Set $\ell=\lS(\Delta)$. We have $$d_1^\ell\leq 2^\ell d_3.$$
\end{prp}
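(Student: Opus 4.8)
The plan is to reformulate the claimed inequality as a linear bound between the two coincidence functions. Since $d_1=2^{-r_1}$ and $d_3=2^{-r_3}$, the inequality $d_1^\ell\le 2^\ell d_3$ is equivalent, after taking $\log_2$, to
$$r_3(u,v)\le\ell\,(r_1(u,v)+1)$$
for all distinct $u,v\in M$ (the case $u=v$ being trivial, as both sides vanish). So I would fix distinct $u,v$, write their greedy normal forms $\iota_1(u)=(u_1,\dots,u_n)$ and $\iota_1(v)=(v_1,\dots,v_m)$, set $k=r_1(u,v)$, and let $c=u_1\cdots u_k=v_1\cdots v_k$ be their common initial segment. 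The crucial length input is that in a Garside monoid every reduced element divides $\Delta$, so each normal factor satisfies $\lS(u_i)\le\lS(\Delta)=\ell$; additivity of the norm then gives $\lS(c)\le\ell k$ and $\lS(c\,u_{k+1})\le\ell(k+1)$ whenever the factor $u_{k+1}$ exists.

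The heart of the argument is to exhibit, at length at most $\ell(k+1)$, a left divisor of one element that is not a left divisor of the other, which forces the prefix sets to differ there. Assume first that both normal forms have a $(k+1)$-th factor, so $u_{k+1}\ne v_{k+1}$. As $\preceq$ is a partial order, at least one of these does not left-divide the other, and by the symmetry of $r_1,r_3$ in $u,v$ I may assume $u_{k+1}\npreceq v_{k+1}$. I would then take $w=c\,u_{k+1}=u_1\cdots u_{k+1}$ as witness: clearly $w\preceq u$ and $\lS(w)\le\ell(k+1)$. To see $w\npreceq v$, suppose otherwise; writing $v=c\,t$ with $t=v_{k+1}\cdots v_m$ and cancelling $c$ on the left yields $u_{k+1}\preceq t$. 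But $u_{k+1}\in M_{\mathrm{red}}$ and, by definition of the normal form, $v_{k+1}=\alpha(t)$ is the maximal reduced left-divisor of $t$; hence $u_{k+1}\preceq v_{k+1}$, contradicting the choice. Therefore $w\in\mathrm{Pref}_N(u)\setminus\mathrm{Pref}_N(v)$ for $N=\lS(w)\le\ell(k+1)$, so $\mathrm{Pref}_N(u)\ne\mathrm{Pref}_N(v)$ and $r_3(u,v)<N\le\ell(k+1)$.

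It remains to treat the degenerate case in which one normal form, say that of $u$, has exactly $k$ factors, so $u=c$ and (since $u\ne v$) the form of $v$ has strictly more factors. Here a pure length count suffices: with $L=\lS(u)\le\ell k$, additivity shows $u$ has no left-divisor of length $L+1$, whereas $v$, being longer, does, so $\mathrm{Pref}_{L+1}(u)=\emptyset\ne\mathrm{Pref}_{L+1}(v)$ and $r_3(u,v)\le L\le\ell k\le\ell(k+1)$. Combining the two cases yields $r_3\le\ell(r_1+1)$, hence the proposition. The step I expect to be the main obstacle is the witnessing divisibility claim $w\npreceq v$: it interlocks the Garside-specific fact $M_{\mathrm{red}}=\{x\mid x\preceq\Delta\}$ (used to bound factor lengths by $\ell$) with the characterisation of $\alpha(t)$ as the greatest reduced left-divisor (used to promote $u_{k+1}\preceq t$ to $u_{k+1}\preceq v_{k+1}$); arranging the symmetry reduction so that exactly the non-dividing factor is placed on $u$ is the only delicate point, the rest being bookkeeping on lengths.
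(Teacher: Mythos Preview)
Your proof is correct and follows essentially the same approach as the paper: translate the inequality to $r_3\le\ell(r_1+1)$, use that each greedy factor has length at most $\ell=\ell_S(\Delta)$ since $M_{\mathrm{red}}=\{x\mid x\preceq\Delta\}$, and then exhibit a prefix of length at most $\ell(k+1)$ that separates $u$ from $v$. The only cosmetic difference is in the main case: the paper assumes both $u_1\cdots u_{k+1}\preceq v$ and $v_1\cdots v_{k+1}\preceq u$, derives $u_{k+1}=v_{k+1}$ via the $\alpha$-maximality argument, and concludes by contradiction that one of the two fails; you instead use antisymmetry of $\preceq$ up front to pick the non-dividing factor and argue one direction directly. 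The degenerate case is handled identically in both.
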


\begin{proof}
Let $u,v$ be distinct in $M$. Set $\iota_1(u)=(u_1,\dots,u_n)$, $\iota_1(v)=(v_1,\dots,v_m)$, $r_1=r_1(u,v)$, and $r_3=r_3(u,v)$. If $u=1$ or $v=1$, then $r_1=r_3=0$, so $(d_1(u,v))^\ell\leq2^\ell d_3(u,v)$. If $r_1=n$ or $r_1=m$, then $u\preceq v$ or $v\preceq u$, and so $r_3\leq r_1\ell$ because $\ell_S(w)\leq\ell$ for all $w\in M_{\mathrm{red}}$. Thus $(d_1(u,v))^\ell\leq d_3(u,v)\leq2^\ell d_3(u,v)$. Otherwise, suppose $u_1\cdots u_{r_1+1}\preceq v$ and $v_1\cdots v_{r_1+1}\preceq u$. Then $u_{r_1+1}\preceq v_{r_1+1}\cdots v_m$ and $v_{r_1+1}\preceq u_{r_1+1}\cdots u_n$. Thus, by definition of the greedy normal form, $u_{r_1+1}\preceq v_{r_1+1}$ and $v_{r_1+1}\preceq u_{r_1+1}$. So $u_{r_1+1}=v_{r_1+1}$, which contradicts the definition of $r_1(u,v)$. Therefore, one has either $u_1\cdots u_{r_1+1}\nprec v$ or $v_1\cdots v_{r_1+1}\nprec u$. Either way, we have $r_3\leq\mathrm{max}\{\lS(u_1\cdots u_{r_1+1}),\lS(v_1\cdots v_{r_1+1})\}\leq(r_1+1)\ell$. Hence, $(d_1(u,v))^\ell\leq2^\ell d_3(u,v)$.
\end{proof}

Gathering propositions \ref{prp3} and \ref{prp6}, we get :

\begin{thm}\label{thm3}
In a  finitely generated Garside monoid, equiped with a length $\ell_S$, the metrics $d_1$ and $d_3$ are uniformly equivalent.
\end{thm}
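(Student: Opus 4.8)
The plan is simply to assemble the two one-sided comparisons already established. Recall that, by Definition \ref{df1}, saying that $d_1$ and $d_3$ are uniformly equivalent amounts to showing that both identity maps $\mathrm{Id}\colon(M,d_1)\to(M,d_3)$ and $\mathrm{Id}\colon(M,d_3)\to(M,d_1)$ are uniformly continuous. Since both metrics take values in $[0,1]$ (they are of the form $2^{-r}$ with $r\in\N\cup\{\infty\}$), the two moduli of continuity to be exhibited are elementary.

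For the first direction, Proposition \ref{prp3} gives $d_3(u,v)\leq d_1(u,v)$ for all $u,v\in M$, so $\mathrm{Id}\colon(M,d_1)\to(M,d_3)$ is $1$-Lipschitz; in particular, for any $\varepsilon>0$ the choice $\delta=\varepsilon$ works, and this map is uniformly continuous.

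For the converse direction, I would start from Proposition \ref{prp6}, namely $d_1(u,v)^\ell\leq 2^\ell\,d_3(u,v)$ with $\ell=\lS(\Delta)$. Taking $\ell$-th roots (all quantities are non-negative) yields $d_1(u,v)\leq 2\,d_3(u,v)^{1/\ell}$ for all $u,v\in M$. Hence, given $\varepsilon>0$, setting $\delta=(\varepsilon/2)^\ell>0$ gives $d_3(u,v)<\delta\ \Rightarrow\ d_1(u,v)<2\,\delta^{1/\ell}=\varepsilon$, so $\mathrm{Id}\colon(M,d_3)\to(M,d_1)$ is uniformly continuous as well. Combining the two directions completes the proof.

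There is essentially no hard part: the content of the theorem is carried entirely by Propositions \ref{prp3} and \ref{prp6}, and the only step remaining is the routine observation that an inequality of the form $d_1^\ell\leq 2^\ell d_3$ translates into a (Hölder-type) modulus of continuity for the identity. One could additionally record that it is precisely the finiteness of $\ell=\lS(\Delta)$ — which uses that $M$ is finitely generated and that $\Delta$ is a genuine element of $M$, i.e.\ the Garside hypothesis — that makes the exponent in Proposition \ref{prp6} finite and hence makes this argument go through; this is the only place the Garside structure is used beyond what Proposition \ref{prp3} already requires.
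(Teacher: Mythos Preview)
Your proof is correct and follows exactly the paper's approach: the paper's proof of Theorem \ref{thm3} consists of the single line ``Gathering propositions \ref{prp3} and \ref{prp6}, we get:'', and you have simply spelled out the routine $\varepsilon$--$\delta$ verification that these two inequalities yield uniform equivalence.
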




\underline{\bf Question :} Are $d_1$ and $d_2$ uniformly equivalent in Garside monoids ?

~~\\

\section{Contractability of endomorphisms of Artin monoids}

The aim of this section is to extend \cite[Theorem 4.1]{RoS} to all Artin monoids. However, it is easy to verify  that the assertions stated in \cite[Theorem 4.1]{RoS} can be not equivalent in an Artin monoids (see Example~\ref{ex6} below). So \cite[Theorem 4.1]{RoS} can not be directly extended. Actually, in the general case, the metric $d_1$ appears as more natural than $d_2$, mainly because of Property~(\ref{eq8}). Moreover, one can verify that  Property (iii) of Lemma \ref{lm13} is the exact translation of Property (14) in \cite[Theorem 4.1(iii)]{RoS} when replacing $d_2$ by $d_1$. So Theorem \ref{thm4} provided a convenient generalisation of  \cite[Theorem 4.1]{RoS} to the context of Artin monoids.


Let $M=\langle S\mid[a,b\rangle^{m_{ab}}=[b,a\rangle^{m_{ab}}; m_{ab}\neq\infty\rangle^+$ be an Artin monoid, and $\varphi$ be in $\mathrm{End}(M)$. As shown in \cite{RoS}, the metric space $(M,d_1)$ admits a completion $(\widehat{M},d_1)$ defined as follows. Let $\partial M$ consist of all infinite sequences of the form $u_1u_2
\cdots$, such that $u_i\in M_{\mathrm{red}}$ for all $i$, and $u_1\cdots u_n$ is a (greedy) normal 
form for all $n\in\N$. We have $\widehat{M}=M\cup\partial M$.

The metric $d_1$ extends to $\widehat{M}$ in the obvious way, and it is easy to check that $(\widehat{M},d_1)$ is
complete: given a Cauchy sequence $(U_n)_n$ with $U_n=u_{n1}u_{n2}\cdots$, it follows
easily that each sequence $(u_{nk})_k$ is stationary with limit, say, $u_k$, and we get $u_1u_2\cdots
=\mathrm{lim}_{n\rightarrow\infty}U_n$. Since $u_1\cdots u_n\in M$ and it is in a normal form for all
$n$, and $u_1u_2\cdots=\mathrm{lim}_{n\rightarrow\infty}u_1\cdots u_n$, then $(\widehat{M},d_1)$ is
indeed the completion of $(M,d_1)$. We may refer to $\partial M$ as the {\em boundary} of $M$.

Assume that $\varphi$ is uniformly continuous with respect to $d_1$. Since
$(\widehat{M},d_1)$ is the completion of $(M,d_1)$, $\varphi$ admits a unique continuous extension $\Phi$
to $(\widehat{M},d_1)$. By continuity, we must have $\Phi(X)=\mathrm{lim}_{n\rightarrow\infty}\varphi(u_n)$
whenever $X\in\partial M$ and $(u_n)_n$ is a sequence on $M$ satisfying $X=\mathrm{lim}_{n\rightarrow
\infty}u_n$.~~\\

\begin{lm}\label{lm13}
The following properties are equivalent:

{\renewcommand{\arraystretch}{1.5}
\renewcommand{\tabcolsep}{0.2cm}
\begin{tabular}{lllcl}
(i) & for all $u,v\in M$, & $\alpha(uv)=\alpha(u)$ & $\Rightarrow$ & $\alpha(\varphi(uv))=\alpha(\varphi(u));$\\
(ii) & for all $u,v\in M$, & $\alpha(uv)=u$ & $\Rightarrow$ & $\alpha(\varphi(uv))=\alpha(\varphi(u))$;\\
(iii) & for all $u,v\in M_{\mathrm{red}}$, & $\alpha(uv)=u$ & $\Rightarrow$ & $\alpha(\varphi(uv))=\alpha(\varphi(u))$;\\
(iv) & for all $u\in M$, & $\alpha(\varphi(u))=\alpha(\varphi(\alpha(u)))$.
\end{tabular}}
\end{lm}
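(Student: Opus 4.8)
The plan is to prove the chain of implications $(i)\Rightarrow(ii)\Rightarrow(iii)\Rightarrow(iv)\Rightarrow(i)$, exploiting the defining property~\eqref{eq8} of $\alpha$ and the fact that $\alpha$ restricts to the identity on $M_{\mathrm{red}}$ at every turn.

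First, $(i)\Rightarrow(ii)$ is trivial, since $\alpha(uv)=u$ forces $u\in M_{\mathrm{red}}$ (the image of $\alpha$ lies in $M_{\mathrm{red}}$), hence $\alpha(u)=u$ and the hypothesis of $(i)$ holds. Next, $(ii)\Rightarrow(iii)$ is immediate because $(iii)$ is just $(ii)$ restricted to a smaller class of pairs. The implication $(iv)\Rightarrow(i)$ should also be short: assuming $(iv)$, from $\alpha(uv)=\alpha(u)$ one applies $\alpha\circ\varphi$ and uses $(iv)$ twice together with~\eqref{eq8}; more precisely $\alpha(\varphi(uv))=\alpha(\varphi(\alpha(uv)))=\alpha(\varphi(\alpha(u)))=\alpha(\varphi(u))$.

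The substantive step is $(iii)\Rightarrow(iv)$. Here I would fix $u\in M$, write its greedy normal form $u=u_1\cdots u_n$ with $u_i\in M_{\mathrm{red}}$, so that $\alpha(u)=u_1$, and proceed by induction on $n$ to show $\alpha(\varphi(u_1\cdots u_n))=\alpha(\varphi(u_1))$. The case $n\le 1$ is clear. For the inductive step, the key observation is that $u_1u_2$ being a normal form means exactly $\alpha(u_1u_2)=u_1$, with $u_1,u_2\in M_{\mathrm{red}}$; applying $(iii)$ gives $\alpha(\varphi(u_1u_2))=\alpha(\varphi(u_1))$. The task is then to bootstrap from two factors to $n$ factors: using~\eqref{eq8} one has $\alpha(\varphi(u_1\cdots u_n))=\alpha(\varphi(u_1)\cdot\varphi(u_2\cdots u_n))=\alpha(\varphi(u_1)\cdot\alpha(\varphi(u_2\cdots u_n)))$, and by the induction hypothesis $\alpha(\varphi(u_2\cdots u_n))=\alpha(\varphi(u_2))$, so this equals $\alpha(\varphi(u_1)\varphi(u_2))=\alpha(\varphi(u_1u_2))=\alpha(\varphi(u_1))$. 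This closes the induction and yields $(iv)$.

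The main obstacle I anticipate is making sure the manipulations with $\alpha$ are legitimate — in particular that $\alpha(xy)=\alpha(x\alpha(y))$ can be applied in the form $\alpha(\varphi(u_1)\varphi(u_2\cdots u_n))=\alpha(\varphi(u_1)\alpha(\varphi(u_2\cdots u_n)))$, which is exactly~\eqref{eq8}, and that the local characterization of normal forms quoted in the preliminaries (namely that $u_1\cdots u_n$ is normal iff each $u_iu_{i+1}$ is) is what licenses reading off $\alpha(u_1u_2)=u_1$. No genuinely hard estimate is involved; the care is entirely in chaining the identities correctly and handling the degenerate cases $u=1$ and $n=1$ without circularity.
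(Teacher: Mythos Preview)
Your proof is correct and follows essentially the same approach as the paper. The paper uses the cycle $(ii)\Rightarrow(iv)\Rightarrow(i)\Rightarrow(iii)\Rightarrow(ii)$ while you use $(i)\Rightarrow(ii)\Rightarrow(iii)\Rightarrow(iv)\Rightarrow(i)$, but the only nontrivial implication in either cycle is the passage from $(iii)$ back to the others, and both arguments do this by induction on the length of the greedy normal form, repeatedly applying~\eqref{eq8} and the local characterization of normality to peel off one reduced factor at a time --- your induction is on $|u|_1$ to prove $(iv)$ directly, the paper's is on $|v|_1$ to prove $(ii)$, but the computations are the same.
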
~

\begin{proof}
We prove that $(ii)\Rightarrow(iv)\Rightarrow(i)\Rightarrow(iii)\Rightarrow(ii)$. Let $u,v$ be in $M$. Set $\iota_1(u)=(u_1,\dots,u_n)$ the greedy normal form of $u$. Assume $(ii)$ holds. Then $\alpha(u_1(u_2\cdots u_n))=u_1$ and, by $(ii)$, $\alpha(\varphi(u_1\cdots u_n))=\alpha(\varphi(u_1))=\alpha(\varphi(\alpha(u)))$. Thus $(iv)$ holds.\\
Assume $(iv)$ holds and $\alpha(uv)=\alpha(u)$. Then by $(iv)$, $\alpha(\varphi(uv))=\alpha(\varphi(\alpha(uv)))=\alpha(\varphi(\alpha(u)))= \alpha(\varphi(u))$ and $(i)$ holds.\\
Assume now $(i)$. If $u\in M_{\mathrm{red}}$ then $\alpha(u)=u$, so $\alpha(uv)=u$ implies $\alpha(uv)=\alpha(u)$, which in turn implies $\alpha(\varphi(uv))=\alpha(\varphi(u))$ by $(i)$. So $(iii)$ holds.\\
Assume finally $(iii)$ and assume $\alpha(uv)=u$. In particular $u\in M_{\mathrm{red}}$. We prove that $\alpha(\varphi(uv))=\alpha(\varphi(u))$ by induction on $|v|_1=k$. If $|v|_1=1$, then $v\in M_{\mathrm{red}}$ and the result holds by (iii). Assume $|v|_1>1$ plus the induction hypothesis. Set $k=|v|_1$ and let $\iota_1(v)=(v_1,\dots,v_m)$ be the greedy normal form of $v$. In view of (\ref{eq8}), we have $\alpha(\varphi(uv))=\alpha(\varphi
(u)\alpha(\varphi(v)))$. Now, we have $\alpha(v)=\alpha(v_1
\cdots v_k)=v_1$. Since $|v_2\cdots v_k|_1=k-1<k$, then by the induction hypothesis, $\alpha(\varphi
(v_1\cdots v_k))=\alpha(\varphi(v_1))$. Hence $\alpha(\varphi(uv))=\alpha(\varphi(u)\alpha(\varphi(v
_1)))=\alpha(\varphi(u)\varphi(v_1))=\alpha(\varphi(uv_1))$. We also have $\alpha(uv_1)=\alpha(u
\alpha(v))=\alpha(uv)=u$ and $|v_1|_1=1<k$. Then, by the case $k=1$, we get $\alpha(\varphi
(uv_1))=\alpha(\varphi(u))$. Thus $\alpha(\varphi(uv))=\alpha(\varphi(u))$ and $(ii)$ holds.
\end{proof}~

Recall that a mapping $\varphi:(X,d)\rightarrow(X,d)$ on a metric space is called a {\em contraction} with respect to $d$, if $d(\varphi(u),\varphi(v))\leq d(u,v)$ for all $u,v\in X$.

~~\\
~~\\

\begin{thm}\label{thm4}
The following properties are equivalent:
\begin{enumerate}
\item[(i)] $\varphi$ is uniformly continuous, and $\Phi$ is a contraction with respect to $d_1$;
\item[(ii)] $\varphi$ is a contraction with respect to $d_1$;
\item [(iii)] for all $u,v\in M_{\mathrm{red}}$, ~~~$\alpha(uv)=u~~\Rightarrow~~\alpha(\varphi(uv))=\alpha(\varphi(u))$;
\item [(iv)] for all $u\in M$, ~~$\alpha(\varphi(u))=\alpha(\varphi(\alpha(u)))$.
\end{enumerate}
Furthermore, in these cases, if $u=u_1u_2\cdots\in\widehat{M}$ and $\Phi(u)=U_1U_2\cdots$, then for all $m\in\N^*$ with $m\leq|u|_1$, one has~~$\iota_1(\varphi(u_1\cdots u_m))=(U_1,\dots,U_m,\dots)$.
\end{thm}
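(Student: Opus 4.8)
The statement to prove is Theorem~\ref{thm4}, an equivalence of four properties together with an explicit description of $\iota_1(\varphi(\cdot))$ in the contracting case. My plan is to prove the cycle of implications
\[(iii)\iff(iv)\Rightarrow(ii)\Rightarrow(i)\Rightarrow(ii)\Rightarrow(iii),\]
keeping in mind that $(iii)\iff(iv)$ is immediate from Lemma~\ref{lm13} (it is exactly the equivalence of properties (iii) and (iv) there). So the real work lies in linking the algebraic conditions (iii)/(iv) to the metric condition (ii), and (ii) to the completion-level condition (i), while simultaneously extracting the ``furthermore'' formula for $\iota_1(\Phi(u))$.

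\medskip
\noindent\textbf{Key steps.} First I would prove $(iv)\Rightarrow(ii)$ by a direct computation on normal forms. Take $u,v\in M$, distinct, with $\iota_1(u)=(u_1,\dots,u_n)$, $\iota_1(v)=(v_1,\dots,v_m)$ and $r=r_1(u,v)$, so $u_1\cdots u_r=v_1\cdots v_r$. Then $\varphi(u)$ and $\varphi(v)$ share a common prefix $\varphi(u_1\cdots u_r)$, and the point is to show their greedy normal forms agree for at least $r$ terms, i.e. $r_1(\varphi(u),\varphi(v))\ge r$. For this I would show, by induction on $k$, that $\iota_1^{[k]}(\varphi(u))$ depends only on $u_1\cdots u_k$ (equivalently $\iota_1^{[k]}(\varphi(u)) = \iota_1^{[k]}(\varphi(u_1\cdots u_k))$ when $k\le n$): the engine is Lemma~\ref{lm13}(iv) applied to $\varphi(u_1\cdots u_k \cdot u_{k+1}\cdots u_n)$, together with Property~(\ref{eq8}) $\alpha(xy)=\alpha(x\alpha(y))$, to peel off that the first greedy factor of $\varphi(u)$ equals the first greedy factor of $\varphi(u_1)=\varphi(\alpha(u))$; then left-cancel and recurse on the tail. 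This same induction is what produces the ``furthermore'' statement: each factor $U_k$ of $\Phi(u)$ is determined by $u_1\cdots u_k$, hence by $\varphi(u_1\cdots u_m)$ for any $m\ge k$, and the normality of $U_1\cdots U_m$ follows since it is a segment of the genuine normal form $\iota_1(\varphi(u_1\cdots u_m))$.

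\medskip
\noindent\textbf{Remaining implications.} For $(ii)\Rightarrow(iii)$: suppose $(iii)$ fails, so there are $u,v\in M_{\mathrm{red}}$ with $\alpha(uv)=u$ but $\alpha(\varphi(uv))\ne\alpha(\varphi(u))$. Since $\alpha(uv)=u$ means $\iota_1(uv)=(u,v')\cdots$ starts with $u$ — more precisely $u$ is the first greedy factor of $uv$ — while $\iota_1(u)=(u)$, we get $r_1(u,uv)\ge 1$, whereas $r_1(\varphi(u),\varphi(uv))=0$ because the first greedy factors $\alpha(\varphi(u))$ and $\alpha(\varphi(uv))$ differ; hence $d_1(\varphi(u),\varphi(uv))=1>d_1(u,uv)$, contradicting that $\varphi$ is a contraction. (Care is needed with the edge case $v=1$, which is excluded since then $uv=u$ and $\alpha(\varphi(uv))=\alpha(\varphi(u))$ trivially.) For the completion block: $(ii)\Rightarrow(i)$ since a contraction is in particular $1$-Lipschitz hence uniformly continuous, so $\Phi$ exists, and $\Phi$ inherits the contraction inequality from $\varphi$ by density and continuity of $d_1$; and $(i)\Rightarrow(ii)$ is trivial since $\varphi=\Phi|_M$. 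This closes the cycle.

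\medskip
\noindent\textbf{Main obstacle.} The crux is the inductive claim that the first $k$ greedy factors of $\varphi(u)$ depend only on $u_1\cdots u_k$, and getting it to interact cleanly with left cancellation. The subtlety is that after peeling off the first factor $U_1=\alpha(\varphi(u_1))$ one must argue $U_1\preceq\varphi(u_1\cdots u_k)$ and then study $U_1^{-1}\varphi(u_1\cdots u_k)$ versus $U_1^{-1}\varphi(u)$; one cannot just ``apply $\varphi$ to the tail'' because $\varphi$ need not respect normal forms factor-by-factor, so the descent must be run through $\alpha$ and Property~(\ref{eq8}) rather than through a naive factorwise image. Handling the boundary case $u\in\partial M$ in the ``furthermore'' part also requires noting that $u_1\cdots u_m$ is an honest element of $M$ for every finite $m$, so the finite statement applies and then one passes to the limit; this is routine once the finite case is in hand. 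Everything else is bookkeeping with $\iota_1$, $r_1$ and the definition $d_1=2^{-r_1}$.
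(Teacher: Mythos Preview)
Your proposal is correct and follows essentially the same route as the paper. Both arguments reduce $(iii)\Leftrightarrow(iv)$ to Lemma~\ref{lm13}, treat $(i)\Leftrightarrow(ii)$ as routine (contraction $\Rightarrow$ uniform continuity, then density), handle $(ii)\Rightarrow(iii)$ by comparing $r_1(u,uv)\ge 1$ with $r_1(\varphi(u),\varphi(uv))=0$ when the conclusion fails, and prove the main implication $(iv)\Rightarrow(ii)$ via the inductive claim that $\iota_1^{[k]}(\varphi(u))=\iota_1^{[k]}(\varphi(u_1\cdots u_k))$, using Property~(\ref{eq8}) and $(iv)$ together with left cancellation; the ``furthermore'' statement is exactly this induction. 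The only cosmetic difference is that the paper runs the induction \emph{upward} in $k$ (assume the claim for $k-1$ and extend by one factor, writing $\varphi(u_1\cdots u_{k-1})=U_1\cdots U_{k-1}Z$ and showing $U_k\preceq Z\varphi(u_k)$), whereas your description phrases it as peeling off $U_1$ and recursing on the tail; since the tail is not literally of the form $\varphi(\cdot)$, the upward formulation is the cleaner way to write it, but you correctly flag this as the main obstacle and name the right fix.
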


\begin{proof}
The equivalence $(i)\Leftrightarrow(ii)$ is clear, and $(iii)\Leftrightarrow(iv)$ follows from Lemma \ref{lm13}.\\
Assume $(ii)$. Let $u,v$ be in $M_{\mathrm{red}}$ such that $\alpha(uv)=u$. Then $d_1(uv,u)=\frac{1}{2}$, and by $(ii)$, $d_1(\varphi(uv),\varphi(u))\leq\frac{1}{2}$. Thus $\alpha(\varphi(uv))=\alpha(\varphi(u))$. So $(iii)$ holds.\\
Conversely, assume $(iii)$. Let $u$ belong to $M$. Set $\iota_1(u)=(u_1,\dots,u_n)$ and $\iota_1(\varphi(u))=(U_1,\dots,U_N)$. We prove by induction on $k$ that for $k\in\{1,\dots,n\}$, one has $\iota_1(\varphi(u_1\cdots u_k))=(U_1,\dots,U_k,\dots)$. In particular $n\leq N$. For $k=1$, the result holds by $(iv)$. So assume $k\geq 2$ plus the induction hypothesis. By the induction hypothesis, we can write $\varphi(u_1\cdots u_{k-1})=U_1\cdots U_{k-1}Z$, with $Z\in M$. Since $\varphi(u_1\cdots u_n)=U_1\cdots U_N$, it follows by cancellativity that $Z\varphi(u_k\cdots u_n)=U_k\cdots U_N$. Therefore, $\alpha(Z\varphi(u_k\cdots u_n))=U_k$. But, using (\ref{eq8}) and $(iv)$, we have $\alpha(Z\varphi(u_k\cdots u_n))=\alpha(Z\alpha(\varphi(u_k\cdots u_n)))=\alpha(Z\alpha(\varphi(u_k)))$. In particular, $U_k$ left divides $Z\alpha(\varphi(u_k))$. Hence, $U_1\cdots U_k$ left divides $\varphi(u_1\cdots u_k)$. This imposes by definition of the greedy normal form that $\iota_1(\varphi(u_1\cdots u_k))=(U_1,\dots,U_k,\dots)$, which proves the induction step. Now let $v$ belong to $M$. Set $\iota_1(v)=(v_1,\dots,v_p)$ and $\iota_1(\varphi(v))=(V_1,\dots,V_P)$. Assume $d_1(u,v)=2^{-k}$. Then $u_1=v_1,\dots,u_k=v_k$. It follows from the above result that $\iota_1(\varphi(u_1\cdots u_k))=(U_1,\dots,U_k,\dots)=(V_1,\dots,V_k,\dots)$. Thus $U_1=V_1,\dots,U_k=V_k$ and $d_1(\varphi(u),\varphi(v))\leq2^{-k}$. So $\varphi$ is a contraction. Hence, $(ii)$ holds.\\
Finally, assume $(i)$. Let $u$ lie in $\widehat{M}$. Say $u=u_1u_2\cdots$. Set $\Phi(u)=U_1U_2\cdots\in\widehat{M}$. Let $m$ be in $\N^*$. Then $d_1(u_1\cdots u_m,u)=2^{-m}$. So by $(i)$, we have $d_1(\Phi(u_1\cdots u_m),\Phi(u))\leq2^{-m}$. Hence, $\iota_1(\varphi(u_1\cdots u_m))=\iota_1(\Phi(u_1\cdots u_m))=(U_1,\dots,U_m,\dots)$.
\end{proof}~~

The following example illustrates that the equivalence between (ii) and (iii) in \cite[Theorem 4.1]{RoS} is not true for all Artin monoïds with respect to $d_2$, which is why we used $d_1$ to extend \cite[Theorem 4.1]{RoS} into our Theorem \ref{thm4}.

\begin{ex}\label{ex6}
Let $M=\langle s,t\mid ststststs=tstststst\rangle^+$, and $\varphi\in\mathrm{End}(M)$ such that $\varphi(s)=sts$ and $\varphi(t)=tst$. Set $\Delta=ststststs$, and define $\alpha_2(u)$ by $\iota_2^{[1]}(u)=(\alpha_2(u))$ for $u$ in $M$. We have $X_2=\{s,t,\Delta\}$, and $\{(u,v)\in X_2^2\mid\alpha_2(uv)=u\}=\{(s,t),(t,s),(s,s),(t,t),(\Delta,s),(\Delta,t),(\Delta,\Delta)\}$. Then, for all $u,v\in X_2$, we have $\alpha_2(uv)=u~\Rightarrow~\alpha_2(\varphi(uv))=\alpha_2(\varphi(u))$. However, the morphism $\varphi$ is not a contraction with respect to $d_2$, since $d_2(\varphi(s),\varphi(sts))>d_2(s,sts)$.
\end{ex}~


\end{document}